\def\Eb{\mathbb{E}}
\def\Nb{\mathbb{N}}
\def\Pb{\mathbb{P}}
\def\Rb{\mathbb{R}}
\def\Zb{\mathbb{Z}}
\def\Ac{\mathcal{A}}
\def\Hc{\mathcal{H}}
\def\Ic{\mathcal{I}}
\def\Lc{\mathcal{L}}
\def\Rc{\mathcal{R}}
\def\Sc{\mathcal{S}}
\def\Tc{\mathcal{T}}
\def\Vc{\mathcal{V}}
\def\Yc{\mathcal{Y}}
\def\Er{\mathbf{E}}
\def\Pr{\mathbf{P}}
\def\Ls{\mathscr{L}}
\def\Ts{\mathscr{T}}
\def\Et{\mathit{E}}
\def\Pt{\mathit{P}}
\def\th{^{\text{th}}}
\def\oT{{\overline{\Tc}}}
\def\oH{{\overline{\Hc}}}
\def\oR{{\overline{\rho}}}
\def\|{\\ \smallskip}
\def\d{\mathrm{d}}
\def\ind{\mathbf{1}}
\def\nin{n \rightarrow \infty}
\def\limn{\lim_{\nin}}
\def\ed{\stackrel{\text{\tiny{d}}}{=}}
\def\qqquad{\qquad \quad}
\newtheorem{thm}{Theorem}
\newtheorem{lem}{Lemma}[section]
\newtheorem{prp}[lem]{Proposition}
\newtheorem{cly}[lem]{Corollary}
\numberwithin{equation}{section} 
\title{Non-Gaussian fluctuations of randomly trapped random walks}
\author{Adam Bowditch, National University of Singapore}
\date{}    
\begin{document}
\maketitle 
\begin{abstract}
In this paper we consider the one-dimensional, biased, randomly trapped random walk when the trapping times have infinite variance. 
We prove sufficient conditions for the suitably scaled walk to converge to a transformation of a stable L\'{e}vy process. 
As our main motivation, we apply subsequential versions of our results to biased walks on subcritical Galton-Watson trees conditioned to survive.
This confirms the correct order of the fluctuations of the walk around its speed for values of the bias that yield a non-Gaussian regime. 
\end{abstract}

\let\thefootnote\relax\footnote{\textit{MSC2010 subject classifications:} 60K37, 60F17, 60G51, 60J80. \\ \textit{Keywords:} Random walk, trapped, random environment, Galton-Watson tree, L\'{e}vy processes, functional convergence.}

\section{Introduction}\label{s:int}
Randomly trapped random walks (RTRWs) were introduced in \cite{arcacero15} to generalise models such as the Bouchaud trap model (see \cite{bo92, foma14, mo11, zi09}), as well as to provide a framework for studying random walks on other random graphs in which trapping naturally occurs such as random walks on percolation clusters (see \cite{dh84, frha14}) and random walk in random environment (see \cite{bosz02, kekosp75, szze99}). In the inaugural paper, the authors prove that the possible scaling limits of the unbiased RTRW on $\Zb$ belong to a class of time changed Brownian motions (called randomly trapped Brownian motions) including the fractional kinetics process (see \cite{mesc04}) and the FIN diffusion (see \cite{foisne02}) as well as a larger class where the time change retains much of the random spatial inhomogeneity in a more complex manner than in the FIN case. Unbiased RTRWs on $\Zb^d$ for $d\geq 2$ have been studied further in \cite{cewa15} where a complete classification of the possible scaling limits is given; generalising previously well known results for models such as the continuous time random walk \cite{mesc04} and the Bouchaud trap model \cite{arce07}.

In this paper, we are interested in biased RTRWs in one dimension motivated by the study of biased random walks on subcritical Galton-Watson (GW) trees conditioned to survive. Specifically, we study regimes in which the trapping is sufficiently strong so that the walk experiences non-Gaussian fluctuations. In recent years there has been much progress in models involving trapping; reviews of recent developments, more extensive background and further motivation is given in \cite{arce06} and \cite{arfr16}.  

We now define the one dimensional RTRW via a random time change of a simple random walk. Set a bias parameter $\beta\geq1$ and a law $\pi$ on $(0,\infty)$-valued probability measures. Fix an environment $\omega=(\omega_x)_{x \in \Zb}$ distributed according to the environment law $\Pr:=\pi^{\otimes \Zb}$. Let $(Y_k)_{k\geq0}$ be a simple random walk on $\Zb$ starting from $Y_0=0$ with transition probabilities $\Pt^\omega(Y_{k+1}=y-1|Y_k=y)=(\beta+1)^{-1}=1-\Pt^\omega(Y_{k+1}=y+1|Y_k=y)$. For $x \in \Zb$ write 
\[\Lc(x,n):=\sum_{k=0}^n\ind_{\{Y_k=x\}}\]
for the local time of $Y$ at site $x$ by time $n$. Under the quenched law $\Pt^\omega$, let $(\eta_{x,i})_{x \in \Zb, i\geq 1}$ be independent with $\eta_{x,i}$ distributed according to the law $\omega_x$. We define the clock process
\begin{flalign*}
 S_n \; := \; \sum_{x \in \Zb} \sum_{i=1}^{\Lc(x,n-1)}\eta_{x,i} \; = \; \sum_{k=0}^{n-1}\eta_{Y_k,\Lc(Y_k,k)} \qquad \text{ and write } \qquad S^{-1}_t:=\sup\{k\geq 0:S_k\leq t\}
\end{flalign*}
for its inverse. We then define the randomly trapped random walk by
\[X_t:=Y_{S^{-1}_t}.\]

For a fixed environment $\omega$, this process is then a continuous time random walk on $\Zb$ with $k\th$ holding time $\eta_k:=\eta_{Y_k,\Lc(Y_k,k)}$ distributed according to the $k\th$ trap $\omega_{Y_k}$ seen by the embedded walk. We note that the law of $Y$ is independent of the environment and we often drop the superscript from $\Pt^\omega$ when considering the embedded walk to emphasise this. For convenience we will define $S_t=S_{\lfloor t\rfloor}$ where $\lfloor t\rfloor:=\max\{k \in \Zb:k\leq t\}$ for non-integer $t \in \Rb$. The main results in this article will be with respect to the annealed law $\Pb(\cdot):=\int \Pt^\omega(\cdot) \Pr(\d \omega)$. 

Although we have the same underlying graph as studied in \cite{arcacero15}, we do not observe time changes which retain the randomness of the spatial composition in the same way. The reason for this is that the bias constantly forces the walk into new regions of the graph making it unlikely that the walk spends a large amount of time in any finite region. In particular, by a law of large numbers it is straightforward to see that $Y_{nt}/n$ converges $\Pb$-a.s.\ to the deterministic process $\upsilon_\beta t$ where 
\begin{flalign}\label{e:ups}
\upsilon_\beta=\frac{\beta-1}{\beta+1}.
\end{flalign}

A law of large numbers and central limit theorems for the randomly trapped random walk on $\Zb$ have been shown in \cite{bo18_1}. That is, it is shown that if the bias is positive ($\beta>1$) and the expected holding time $\Eb[\eta_0]$ is finite, then $X_{nt}/n$ converges $\Pb$-a.s.\ to the deterministic process $\nu_\beta t$ where 
\begin{flalign}\label{e:spd}
\nu_\beta=\upsilon_\beta\Eb[\eta_0]^{-1} 
\end{flalign}
is called the speed of the walk. Following this, an annealed invariance principle is shown which states that, if $\beta>1$ and $\Eb[\eta_0^2]<\infty$ then for some $\varsigma \in (0,\infty)$
\[B_t^n:=\frac{X_{nt}-nt\nu_\beta}{\varsigma \sqrt{n}}\]
converges in $\Pb$-distribution to a Brownian motion. 

In this article we consider the randomly trapped random walk when the trapping is too strong to obtain a central limit theorem. In this setting there are two distinct phases depending on whether $\Eb[\eta_0]<\infty$ or not. In both cases we prove functional limiting results for the position of the walk.

In Section \ref{s:sbl} we consider the sub-ballistic phase. That is, when the holding times have infinite mean, in Theorem \ref{t:Sstab} we give conditions under which the clock process $S_t$ can be rescaled to converge to a L\'{e}vy process (see \cite{be96} for background on L\'{e}vy processes). We then use this to show that the position of the walk converges, after suitable rescaling, to the scaled inverse of the aforementioned process. 

By $a\asymp b$ we mean that there exist constants $c$, $C$ such that $cb\leq a\leq Cb$ and by $a\sim b$ we mean that $a/b$ converges to a positive constant. Throughout, we use $D_U$, $D_{J_1}$ and $D_{M_1}$ to denote the space of c\`{a}dl\`{a}g functions mapping $[0,\infty)$ to $\Rb$ equipped with the uniform, $J_1$ and $M_1$ topologies respectively; we refer the reader to \cite{bi99,wh02} for a detailed set up of these spaces.   
\begin{thm}\label{t:Sstab}
Let $\beta>1$, $\alpha\in(0,1)$ and $f:\Nb\times\Rb_+\rightarrow \Rb$ be such that $f(1,\lambda)\asymp\lambda^\alpha$. Suppose there exists $n_k\nearrow \infty$, $a_n=n^{1/\alpha}L(n)$ for some $L$ slowly varying such that for any $l \in\Nb$ and $\lambda>0$ we have
\begin{flalign}\label{StabCon}
-n_k\log\left(\Er\left[\Et^\omega\left[\exp\left(-\frac{\lambda}{a_{n_k}}\eta_0\right)\right]^l\right]\right)\rightarrow f(l,\lambda)
\end{flalign}
as $k\rightarrow \infty$. Then, as $k\rightarrow \infty$, $(S_{n_kt}/a_{n_k},X_{a_{n_k}t}/n_k)_{t\geq 0}$ converges in $\Pb$-distribution on $D_{M_1}\times D_U$ to $(\Sc_t,\upsilon_\beta\Sc^{-1}_t)_{t\geq 0}$ where $\Sc_t$ is a L\'{e}vy process with Laplace transform
\[\Eb\left[\exp\left(-\lambda\Sc_t\right)\right]=\exp\left(-t\upsilon_\beta^2\sum_{l=1}^\infty f(l,\lambda)(1-\upsilon_\beta)^{l-1}\right).\]
In particular, if \eqref{StabCon} holds for all subsequences $n_k$ then $f(l,\lambda)=g(l)\lambda^\alpha$ for some $g\not\equiv0$ and $\Sc_t$ is an $\alpha$-stable subordinator. 
\end{thm}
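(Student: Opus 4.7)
My approach is to identify the scaling limit of the clock process via its annealed Laplace transform conditioned on $Y$, and then to recover $X$ from the inversion identity $X_t = Y_{S^{-1}_t}$. Decomposing
\[
S_n \; = \; \sum_{x \in \Zb} \sum_{i=1}^{\Lc(x,n-1)} \eta_{x,i},
\]
and exploiting that the $\omega_x$ are i.i.d.\ across $x$ under $\Pr$ while, conditional on $(Y,\omega)$, the $\eta_{x,i}$ are independent with $\eta_{x,i}\sim\omega_x$, I set $\Phi(\mu):=\Et^\omega[\exp(-\mu\eta_{0,1})]$ and $N_l(m):=\#\{x\in\Zb:\Lc(x,m)=l\}$ to obtain the factorisation
\[
\Eb\bigl[e^{-\lambda S_{n_k t}/a_{n_k}} \bigm| Y\bigr] \; = \; \prod_{l \ge 1} \Er\bigl[\Phi(\lambda/a_{n_k})^l\bigr]^{N_l(n_k t - 1)},
\]
whose logarithm is identified by \eqref{StabCon} as $-n_k^{-1}\sum_l N_l(n_k t-1)\,f(l,\lambda) + o(1)$.

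The next step is a law of large numbers for $N_l$. Since $\beta>1$, the embedded walk $Y$ is transient with range of size $\upsilon_\beta n + o(n)$, and by the strong Markov property applied at the first visit to any site, the total local time there is geometric with success parameter $\upsilon_\beta=(\beta-1)/(\beta+1)$ (the non-return probability). Hence $N_l(n)/n \to \upsilon_\beta^2(1-\upsilon_\beta)^{l-1}$ almost surely for each $l$; after exchanging limit and sum, justified by the Jensen bound $f(l,\lambda)\le l\,f(1,\lambda)\asymp l\lambda^\alpha$ against the geometric weight $(1-\upsilon_\beta)^{l-1}$ together with a truncation at large $l$, this yields exactly the prescribed Laplace exponent for $\Sc_t$. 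Repeating the computation jointly at finitely many times, and using the asymptotic independence of increments of $Y$ from its regenerative structure at ladder points, furnishes convergence of finite-dimensional distributions; because $S$ is nondecreasing with a subordinator limit, $D_{M_1}$-tightness is then automatic via the standard monotone-process criterion \cite{wh02}.

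For the joint convergence, I write
\[
\frac{X_{a_{n_k}t}}{n_k} \; = \; \frac{Y_{S^{-1}_{a_{n_k}t}}}{S^{-1}_{a_{n_k}t}} \cdot \frac{S^{-1}_{a_{n_k}t}}{n_k};
\]
the first factor tends to $\upsilon_\beta$ by the LLN for $Y$, and the continuous mapping theorem applied to Skorohod inversion (valid because $\alpha\in(0,1)$ together with $f(1,\lambda)\asymp\lambda^\alpha$ force $\Sc$ to have infinite L\'evy measure, hence to be strictly increasing with continuous inverse) sends the second factor to $\Sc^{-1}_t$. Continuity of the limit $\upsilon_\beta\Sc^{-1}$ then upgrades convergence of the second coordinate to $D_U$. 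For the final clause: if \eqref{StabCon} holds along every subsequence, then applying it along $\lfloor cn\rfloor$ for each $c>0$ together with the regular variation of $a_n=n^{1/\alpha}L(n)$ forces the scaling identity $f(l,c^{1/\alpha}\lambda)=c\,f(l,\lambda)$, which combined with $f(1,\lambda)\asymp\lambda^\alpha$ pins down $f(l,\lambda)=g(l)\lambda^\alpha$, so that $\Sc$ has Laplace exponent proportional to $\lambda^\alpha$ and is an $\alpha$-stable subordinator.

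I expect the main technical obstacle to be the uniform-in-$l$ control required to propagate the pointwise $o(1/n_k)$ in \eqref{StabCon} through a summation whose effective length grows with $n_k$, together with the matching uniform-in-$l$ strengthening of the $N_l$ law of large numbers; the inversion step and the $M_1$-to-$D_U$ upgrade are comparatively routine once strict monotonicity of the limiting subordinator has been secured.
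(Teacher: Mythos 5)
Your plan follows essentially the same route as the paper's proof: the same conditional factorisation of the Laplace transform over local-time level sets, the law of large numbers $|\Rc_l(nt)|/nt\rightarrow\upsilon_\beta^2(1-\upsilon_\beta)^{l-1}$ combined with the Jensen bound $f(l,\lambda)\leq lf(1,\lambda)\leq Cl\lambda^\alpha$ and a truncation in $l$ to push the pointwise asymptotic \eqref{StabCon} through the sum uniformly in $n$, Whitt's monotone-function, inverse and composition continuity results for the $M_1$ and $D_U$ statements, and the regular-variation/self-similarity argument for the $\alpha$-stable case. The one step you only gesture at, independence of increments of the limit, is handled in the paper by a coupling in which the unseen environment is resampled at the fixed times $nt_j$ (following \cite{cewa15}) and the resulting discrepancy is shown to be $o(a_n)$ uniformly on compacts; your appeal to the regenerative structure of $Y$ is aimed at the same negligible-overlap estimate, since the dependence comes from the walk revisiting $O(1)$ previously explored traps rather than from $Y$ itself.
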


In Section \ref{s:sdf} we consider the case where the holding times have finite mean but infinite variance. In this setting we are able to apply the speed result from \cite{bo18_1} but the fluctuations are too large to obtain a central limit theorem. Here, we prove conditions under which, after suitable centring and rescaling, the time taken to reach the $nt\th$ level of the backbone converges to a L\'{e}vy process. We then use results of \cite{wh02} to show that the centred and rescaled walk converges in distribution to a L\'{e}vy process which we describe in greater detail in Section \ref{s:sdf} following the proof. Write $\Delta_m:=\inf\{t> 0: X_t=m\}$ for the first hitting time of $m$. The main result of Section \ref{s:sdf} is Theorem \ref{t:fluc}.
\begin{thm}\label{t:fluc}
Let $\beta>1$, $\alpha\in(1,2)$ and $f:\Nb\times\Rb_+\rightarrow \Rb$ be such that $f(1,\lambda)\asymp\lambda^\alpha$. Suppose there exists $n_k\nearrow \infty$, $a_n=n^{1/\alpha}L(n)$ for some $L$ slowly varying such that for any $l \in\Nb$ and $\lambda>0$ we have
\begin{flalign}\label{LogAss}
n_k\log\left(\Er\left[\Et^\omega\left[\exp\left(-\frac{\lambda}{a_{n_k}}(\eta_0-\Eb[\eta_0])\right)\right]^l\right]\right)\rightarrow  f(l,\lambda)
\end{flalign}
as $k\rightarrow \infty$. Then, as $k\rightarrow \infty$, 
\[\left(\frac{\Delta_{n_kt\nu_\beta}-n_kt}{a_{n_k}}, \frac{X_{n_kt}-n_kt\nu_\beta}{a_{n_k}}\right)_{t\geq 0}\]
converges in $\Pb$-distribution on $D_{M_1}\times D_{M_1}$ to $(\Vc_t, -\nu_{\beta}\Vc_t)_{t\geq 0}$ where $\Vc_t$ is a L\'{e}vy process.
In particular, if \eqref{LogAss} holds for all subsequences $n_k$ then $f(l,\lambda)=g(l)\lambda^\alpha$ for some $g\not\equiv0$ and $\Vc_t$ is an $\alpha$-stable process. 
\end{thm}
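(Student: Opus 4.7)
The strategy mirrors that of Theorem \ref{t:Sstab}, but with centring to accommodate the finite-mean, infinite-variance regime. Writing $T_m := \inf\{k \geq 0 : Y_k = m\}$ for the embedded walk's hitting time of $m$, we have $\Delta_m = S_{T_m}$, and I would first split
\[
\Delta_{n_k t \nu_\beta} - n_k t = \left(S_{T_{n_k t \nu_\beta}} - T_{n_k t \nu_\beta}\Eb[\eta_0]\right) + \Eb[\eta_0]\left(T_{n_k t \nu_\beta} - \frac{n_k t \nu_\beta}{\upsilon_\beta}\right),
\]
using $\nu_\beta = \upsilon_\beta/\Eb[\eta_0]$. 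Since $Y$ is a biased simple random walk with $\beta>1$, the hitting-time increments $T_{m+1}-T_m$ are i.i.d.\ with exponentially decaying tails, so $T_m - m/\upsilon_\beta = O(\sqrt{m})$ by the classical CLT. As $\alpha<2$ forces $\sqrt{n_k} = o(a_{n_k})$, the second summand above is negligible on the scale $a_{n_k}$, so everything reduces to analysing $S_{T_{n_k t\nu_\beta}} - T_{n_k t\nu_\beta}\Eb[\eta_0]$.

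The main step is to prove functional convergence of the centred clock process $(S_{n_k t} - n_k t\Eb[\eta_0])/a_{n_k}$ in $D_{M_1}$ to a L\'{e}vy process $\Vc^{\mathrm{cl}}$ with Laplace exponent $t\upsilon_\beta^2\sum_{l\geq1}(1-\upsilon_\beta)^{l-1}f(l,\lambda)$. I would follow the scheme of Theorem \ref{t:Sstab}: decompose $S_n - n\Eb[\eta_0] = \sum_{x\in\Zb}\sum_{i=1}^{\Lc(x,n-1)}(\eta_{x,i} - \Eb[\eta_0])$, condition on the local time profile, and exploit that for $\beta>1$ the local time at each bulk site visited by $Y$ is asymptotically geometric with parameter $\upsilon_\beta$, independently across sites. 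The annealed Laplace transform then factorises over the $\approx n_kt\upsilon_\beta$ visited sites, each producing a factor $\sum_{l\geq 1}\upsilon_\beta(1-\upsilon_\beta)^{l-1}\Er[\Et^\omega[\exp(-\lambda(\eta_0-\Eb[\eta_0])/a_{n_k})]^l]$, and hypothesis \eqref{LogAss} rewrites the logarithm of this factor as $(\upsilon_\beta/n_k)\sum_l(1-\upsilon_\beta)^{l-1}f(l,\lambda)+o(n_k^{-1})$, yielding the claimed Laplace exponent after summation. Under the extra scaling $f(l,\lambda) = g(l)\lambda^\alpha$ the limit is automatically $\alpha$-stable.

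The remaining steps combine composition with functional inversion, relying on \cite{wh02}. A quenched law of large numbers gives $T_{n_kt\nu_\beta}/n_k \to t/\Eb[\eta_0]$ uniformly on compacts, so composition transports the $D_{M_1}$-convergence of the centred clock process to $(\Delta_{n_kt\nu_\beta}-n_kt)/a_{n_k}\to\Vc_t := \Vc^{\mathrm{cl}}_{t/\Eb[\eta_0]}$, which is again a L\'{e}vy process. For the second coordinate, combining the identity $\Delta_{X_{n_kt}}\le n_kt<\Delta_{X_{n_kt}+1}$ with the first-order expansion $\Delta_m\approx m/\nu_\beta + a_{n_k}\Vc_{m/(n_k\nu_\beta)}$ near $m = n_kt\nu_\beta$ yields $X_{n_kt}-n_kt\nu_\beta\approx -\nu_\beta a_{n_k}\Vc_t$; joint convergence on $D_{M_1}\times D_{M_1}$ to $(\Vc_t, -\nu_\beta\Vc_t)$ then follows from the inverse-map continuity results of \cite{wh02}, the $M_1$ topology being the right setting because the jumps of the two coordinates occur simultaneously with opposite signs.

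The main technical obstacle is the functional tightness of the centred clock process: unlike the subordinator setting of Theorem \ref{t:Sstab}, monotonicity is lost and $M_1$-tightness must be verified by truncating large jumps and controlling the compensated small-jump martingale, whose second moment is to be extracted from \eqref{LogAss} by differentiating twice at $\lambda=0$. A secondary but more tractable issue is the mismatch between the deterministic centring $n_kt$ in the theorem statement and the random centring $T_{n_kt\nu_\beta}\Eb[\eta_0]$ arising in the decomposition, which is resolved precisely by the $O(\sqrt{n_k})$ estimate recorded at the outset.
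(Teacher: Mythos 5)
Your architecture (reduce to the centred clock process via $\Delta_m=S_{T_m}$, compose, then invert) is a genuinely different route from the paper, which instead cuts $\Delta$ along the regeneration times of $Y$ into i.i.d.\ centred block sums $\chi_j^+$ (with the $\chi_j^-$ part killed by Lemma \ref{l:KillY}, playing the role of your $O(\sqrt{n_k})$ hitting-time correction) and deduces functional convergence of $\Sigma_{n_kt}/a_{n_k}$ from marginal Laplace transforms. However, your plan has a genuine gap precisely at the step you flag as the main obstacle, and the fix you propose cannot work. The hypotheses force $\Eb[\eta_0^2]=\infty$ (if the variance were finite then $n_k\log\Er[\Et^\omega[\exp(-\lambda(\eta_0-\Eb[\eta_0])/a_{n_k})]]\asymp n_k\lambda^2/a_{n_k}^2\to 0$, contradicting $f(1,\lambda)\asymp\lambda^\alpha$), so there is no second moment to extract, and \eqref{LogAss} is a pointwise limit at fixed $\lambda>0$ which gives no control of derivatives at $0$; what is actually available are truncated moment bounds of the type $n\Eb[(\eta_0/a_n)^2\ind_{\{\eta_0\leq a_n\}}]\leq C$, as used in Lemma \ref{l:UpLw}. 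More seriously, $S_n-n\Eb[\eta_0]$ is not a sum of independent or even martingale-difference increments under $\Pb$: holding times at a revisited site are correlated through $\omega_x$, so the conditional mean of the next holding time given the observed past is not $\Eb[\eta_0]$, and the compensated-martingale/truncation scheme does not apply as stated. Since monotonicity is lost, the trick that upgraded one-dimensional convergence to $M_1$ convergence in Theorem \ref{t:Sstab} is also unavailable; you would still need finite-dimensional convergence with asymptotically independent increments (a resampling argument in the spirit of Lemma \ref{l:indinc}) plus a separate $M_1$-tightness argument, together with uniform-in-$l$ bounds and exponential decay of the local-time counts to justify summing over $l$ (the analogue of Lemma \ref{l:UpLw} and \eqref{EI}). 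The paper's regeneration decomposition is designed exactly to bypass all of this: the $\chi_j^+$ are genuinely i.i.d., so convergence of their Laplace transforms (Proposition \ref{p:DoA}) immediately yields $J_1$ functional convergence of $\Sigma$ (Corollary \ref{c:regseq}) by standard triangular-array theory.

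A smaller but real error: the sandwich $\Delta_{X_{n_kt}}\leq n_kt<\Delta_{X_{n_kt}+1}$ is false as written, since the walk may have visited $X_{n_kt}+1$ earlier and backtracked; it holds only for the running maximum $\overline{X}_t=\Delta^{-1}_t$. The inverse-with-linear-centring theorem of \cite{wh02} therefore gives the joint limit for $(\Delta,\overline{X})$, and you must add a backtracking estimate such as $\Pb(\sup_{t\leq nT}(\overline{X}_t-X_t)>C\log n)\leq nT\beta^{-\lfloor C\log n\rfloor}$ to transfer the result to $X$, as the paper does. Your identification of the Laplace exponent via the local-time counts $|\Rc_l(nt)|$ is sound in spirit (it parallels Proposition \ref{p:Sstab}), but the bookkeeping of the time change and the $\nu_\beta$ factors should be reconciled with the regeneration expression before claiming the explicit exponent.
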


The two asymptotic conditions (\ref{StabCon}) and (\ref{LogAss}) appear to be quite technical however they relate to the usual stable law conditions. Specifically, $\eta_0$ belongs to the domain of attraction of a stable law of index $\alpha$ with respect to $\Pb$ if and only if the relevant asymptotic holds with $l=1$ for all subsequences $n_k$ and $f(1,\lambda)=c\lambda^\alpha$ for some constant $c$ (e.g.\ \cite[Chapter 4]{wh02}). We need to extend to $l\geq 2$ to account for the walk visiting vertices multiple times and we consider more general $f$ so that we may consider holding times which do not belong to the domain of attraction of any stable law.  

It is worth noting that three of the four processes converge in the $M_1$ topology; it is natural to consider whether convergence can be extended to the $J_1$ topology. The main issue we encounter is that if the slowing is caused by spending a large amount of time in a few large traps in the environment then the total time spent in one of these traps may consist of several long excursions. This results in several large jumps in the discrete process $S_{nt}$ contributing to a single large jump in the limit. This means that the convergence will not hold under the $J_1$ topology which distinguishes between a series of small jumps in quick succession and a single large jump of the combined magnitude. A similar argument can be used for the position of the walk and in Section \ref{s:non} we use the example of random walk in random scenery to show that, in general, $(X_{nt}-nt\nu_\beta)/a_n$ does not converge in $D_{J_1}$. 

In Section \ref{s:GWTree} we consider our main motivating example of random walks on subcritical GW-trees. A subcritical GW-tree conditioned to survive consists of a semi-infinite path (called the backbone) with finite GW-trees as leaves. These finite branches form traps in the environment which slow the walk. Typically, the branches are quite short therefore the walk on the tree does not deviate far from the backbone. For this reason we have that the walk on the tree behaves like a randomly trapped random walk on $\Zb$ with holding times distributed as excursion times in finite GW-trees. Biased walks on subcritical GW-trees are, therefore, a natural example of the randomly trapped random walk. Furthermore, they exhibit interesting behaviour as the relationship between the bias and offspring law influences the trapping. In Theorem \ref{t:GWT} we determine the correct range of biases so that the walk undergoes non-Gaussian fluctuations in the spirit of Theorem \ref{t:fluc}. To avoid introducing a large amount of notation here, we defer an explicit construction of the model and an overview the known results to Section \ref{s:GWTree}.

A key incentive for studying random walks on subcritical GW-trees is to understand random walks on supercritical GW-trees and percolation clusters. Although these models do not fit directly into the randomly trapped random walk framework, many of the estimates on excursion times and techniques developed here can be extended to (or used directly in) these models. Analogues of our results in these models would confirm several conjectures from \cite{arfr16} which we discuss further in Section \ref{s:GWTree}. 

A frequent question for random walk in random environment models is whether annealed convergence results can be extended to the quenched setting. This typically relies on properties of the embedded walk and the homogeneity of the random environment. For example, using a technique developed in \cite{bosz02}, it can be shown (see \cite{cewa15}) that, for a randomly trapped random walk that satisfies an annealed invariance principle in sufficiently high dimension, the convergence extends to a quenched invariance principle. This typically fails in low dimension where the local time of the walk is concentrated on a smaller collection of vertices. Indeed, it has been shown in \cite{bo18_1} that a biased randomly trapped random walk in one dimension satisfying an annealed invariance principle requires an environment dependent correction in order that convergence occurs in the quenched setting. This behaviour extends to the regime we consider here and therefore extending to the quenched setting is substantial.

\section{Sub-ballistic regime}\label{s:sbl}
In this section we classify limits of the RTRW model where the embedded random walk has a positive bias and the holding times have infinite mean. The main aim is to prove Theorem \ref{t:Sstab} which we do by considering the arguments of \cite{cewa15} and applying results of \cite{wh02}. For $x \in \Zb$ write 
\[\hat{\omega}_x(\lambda)=\int e^{-\lambda u}\omega_x(\d u)=\Et^\omega[e^{-\lambda \eta_{x,1}}]\] 
to be the quenched Laplace transform of $\omega_x$. Recall from \eqref{e:ups} that $\upsilon_\beta=(\beta-1)/(\beta+1)$ is the speed of the embedded walk $Y$. By the Gambler's ruin this is the probability that $Y$ never returns to the origin. 
 For convenience we write $\Pb^Y(\cdot):=\Pb(\cdot|Y)$ and $\Pt^{\omega,Y}(\cdot):=\Pt^\omega(\cdot|Y)$. We begin, in Proposition \ref{p:Sstab}, by showing that the one-dimensional distributions of the scaled clock process converge.

\begin{prp}\label{p:Sstab}
Under the assumptions of Theorem \ref{t:Sstab}, for any $\lambda>0$,
\[\lim_{k\rightarrow \infty}\Eb\left[\exp\left(-\lambda \frac{S_{n_kt}}{a_{n_k}}\right)\right]=\exp\left(-\upsilon_\beta^2 t\sum_{l=1}^\infty f(l,\lambda)(1-\upsilon_\beta)^{l-1}\right).\]
  \begin{proof}
For simplicity we write $n$ for $n_k$ with the understanding that we consider subsequences along which \eqref{StabCon} holds. Conditional on $Y$, the holding times at different vertices are independent therefore
\begin{flalign*}
 \Eb^Y\left[\exp\left(-\lambda \frac{S_{nt}}{a_n}\right)\right] 
& = \;  \Eb^Y\left[\exp\left(-\frac{\lambda}{a_n}\sum_{x \in \Zb}\sum_{i=1}^{\Lc(x,\lfloor nt\rfloor-1)}\eta_{x,i}\right)\right] \\
 & = \; \prod_{x \in \Zb} \Eb^Y\left[\exp\left(-\frac{\lambda}{a_n}\sum_{i=1}^{\Lc(x,\lfloor nt\rfloor-1)}\eta_{x,i}\right)\right]. 
 \end{flalign*}
 Under $\Pt^{\omega,Y}$ the holding times at a given vertex are independent and do not depend on $Y$ therefore we can write the above expression as
 \begin{flalign*}
  \prod_{x \in \Zb} \Er^Y\left[\Et^{\omega,Y}\left[\prod_{i=1}^{\Lc(x,\lfloor nt\rfloor-1)}e^{-\frac{\lambda}{a_n}\eta_{x,i}}\right]\right] 
 & = \prod_{x \in \Zb}\Er^Y\left[\hat{\omega}_x\left(\frac{\lambda}{a_n}\right)^{\Lc(x,\lfloor nt\rfloor -1)}\right] \\
 & = \prod_{l=1}^\infty\prod_{x \in \Rc_l(nt)}\Er\left[\hat{\omega}_x\left(\frac{\lambda}{a_n}\right)^l\right]
\end{flalign*}
where $\Rc_l(m)=\{x \in \Zb: \; \Lc(x,m-1)=l\}$ is the number of vertices with local time $l$ at time $m-1$. By translation invariance we then have that
\begin{flalign*}
 \Eb^Y\!\left[\exp\left(-\lambda \frac{S_{nt}}{a_n}\right)\right]
 &\; = \; \prod_{l=1}^\infty\Er\!\left[\hat{\omega}_0\left(\frac{\lambda}{a_n}\right)^l\right]^{|\Rc_l(nt)|} \\
&\! =\; \exp\left(\sum_{l=1}^\infty |\Rc_l(nt)|\log\left(\Er\!\left[\hat{\omega}_0\left(\frac{\lambda}{a_n}\right)^l\right]\right)\right).
\end{flalign*}

Write $\Rc_l^+(m)=\{x \in \Zb: \; \Lc(x,m-1)\geq l\}$ then by \cite{pi74} we have that, for fixed $t>0$, $|\Rc_l(nt)|/\lfloor nt\rfloor$ converges almost surely to $\upsilon_\beta^2(1-\upsilon_\beta)^{l-1}$ and $|\Rc_l^+(nt)|/\lfloor nt\rfloor$ converges almost surely to $\upsilon_\beta(1-\upsilon_\beta)^{l-1}$ as $\nin$. The local time at the origin stochastically dominates that of any other vertex therefore
\begin{flalign*}
 \frac{\Et[|\Rc_l^+(m)|]}{m} \; = \; \frac{1}{m}\sum_{x=-m}^m \Pt(\Lc(x,m)\geq l) \;  \leq \; \frac{2m +1}{m}\Pt(\Lc(0,m)\geq l).
\end{flalign*}
The local time $\Lc(0,m)$ is increasing in $m$, therefore for each $l\geq 1$ we have that $\Pt(\Lc(0,m)\geq l)\leq\Pt(\Lc(0,\infty)\geq l)= (1-\upsilon_\beta)^{l-1}$ since $\Lc(0,\infty)$ is geometrically distributed with termination probability $\upsilon_\beta$. It follows that 
\begin{flalign}\label{Rbarbnd}
\frac{\Et[|\Rc_l^+(nt)|]}{nt} \leq C(1-\upsilon_\beta)^l  
\end{flalign}
uniformly over $n,t,l$. For fixed $M \in \Nb$, by using the convergence of $|\Rc_l(nt)|/\lfloor nt\rfloor$ and the assumptions on $\hat{\omega}$ we have that for $\Pt$-a.e.\ $Y$
\begin{flalign*}
 \sum_{l=1}^M |\Rc_l(nt)|\log\left(\Er\left[\hat{\omega}_0\left(\frac{\lambda}{a_n}\right)^l\right]\right) \rightarrow -\upsilon_\beta^2t\sum_{l=1}^Mf(l,\lambda)(1-\upsilon_\beta)^{l-1}. 
\end{flalign*}

By Jensen's inequality and (\ref{StabCon}), 
\[-\log(\Er[\hat{\omega}_0(\lambda/a_n)^l])\;\leq\; -l\log(\Er[\hat{\omega}_0(\lambda/a_n)]) \;\leq\; Cl\lambda^\alpha n^{-1}\]
thus $f$ grows at most linearly in $l$ and therefore we have that $\sum_{l=1}^Mf(l)(1-\upsilon_\beta)^{l-1}$ converges as $M\rightarrow \infty$. Moreover,
\begin{flalign*}
 -\sum_{l=M}^\infty |\Rc_l(nt)|\log\left(\Er\left[\hat{\omega}_0\left(\frac{\lambda}{a_n}\right)^l\right]\right)
 \;\leq\; \frac{C\lambda^\alpha}{n}\sum_{l=M}^\infty l|\Rc_l(nt)|.
\end{flalign*}
By Markov's inequality and (\ref{Rbarbnd}) we have that for $c_1 \in (0,-\log(1-\upsilon_\beta))$ and some $\tilde{c}>0$
\[\Pt\left(\frac{|\Rc_l^+(nt)|}{nt}\geq e^{-c_1l}\right)\;\leq\; \Et\left[\frac{|\Rc_l^+(nt)|}{nt}\right]e^{c_1l} \;\leq\; C(1-\upsilon_\beta)^l e^{c_1l} \;\leq\; Ce^{-\tilde{c}l}.\]
A union bound then shows that 
\[\Pt\left(\bigcup_{l\geq M} \left\{\frac{|\Rc_l(nt)|}{nt}\geq e^{-c_1l}\right\}\right) \leq \sum_{l\geq M}e^{-\tilde{c}l} \leq Ce^{-\tilde{c}M}.\]
It then follows that, for $\delta>0$, 
\begin{flalign*}
 \Pt\left( -\sum_{l=M}^\infty |\Rc_l(nt)|\log\left(\Er\left[\hat{\omega}_0\left(\frac{\lambda}{a_n}\right)^l\right]\right)\geq \delta\right) 
 \;\leq\; Ce^{-\tilde{c}M}+\ind_{\left\{Ct\lambda^\alpha\sum_{k=M}^\infty le^{-\tilde{c}l}\geq \delta\right\}}
\end{flalign*}
which converges to $0$ as $M\rightarrow \infty$ independently of $n$. In particular, 
\begin{flalign*}
 \sum_{l=1}^\infty |\Rc_l(nt)|\log\left(\Er\left[\hat{\omega}_0\left(\frac{\lambda}{a_n}\right)^l\right]\right) \rightarrow -\upsilon_\beta^2 t\sum_{l=1}^\infty f(l,\lambda)(1-\upsilon_\beta)^{l-1} 
\end{flalign*}
in $\Pt$-probability therefore, by bounded convergence, we have that 
\[\limn\Eb\left[\exp\left(-\lambda \frac{S_{nt}}{a_n}\right)\right]=\exp\left(-\upsilon_\beta^2 t\sum_{l=1}^\infty f(l,\lambda)(1-\upsilon_\beta)^{l-1}\right).\]
  \end{proof}
\end{prp}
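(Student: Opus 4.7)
The plan is to condition on the embedded walk $Y$ first, which decouples the holding times at distinct sites, and then to exploit the known law of large numbers for site-level local times of a biased random walk.

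First I would write
\[
\Eb^Y\!\left[\exp\!\left(-\lambda\frac{S_{nt}}{a_n}\right)\right]
=\prod_{x\in\Zb}\Er\!\left[\hat\omega_x\!\left(\frac{\lambda}{a_n}\right)^{\Lc(x,\lfloor nt\rfloor-1)}\right],
\]
using that conditional on $Y$ the traps at different vertices are independent, the holding times within a trap are i.i.d.\ given $\omega_x$, and $\Pr$ is a product measure. Next I would group the product according to the level sets $\Rc_l(nt)=\{x:\Lc(x,nt-1)=l\}$ and use translation invariance in $\omega$ to obtain the compact form
\[
\Eb^Y\!\left[\exp\!\left(-\lambda\frac{S_{nt}}{a_n}\right)\right]
=\exp\!\left(\sum_{l\geq 1}|\Rc_l(nt)|\log\Er\!\left[\hat\omega_0\!\left(\frac{\lambda}{a_n}\right)^l\right]\right).
\]

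For each fixed $l$, Pitman's result \cite{pi74} gives $|\Rc_l(nt)|/\lfloor nt\rfloor\to \upsilon_\beta^2(1-\upsilon_\beta)^{l-1}$ a.s.\ (the factor $\upsilon_\beta^2$ coming from requiring the site to be visited but not revisited after the $l$\th visit), and assumption \eqref{StabCon} rescales $-n\log\Er[\hat\omega_0(\lambda/a_n)^l]$ to $f(l,\lambda)$. Combining these for the first $M$ terms yields, for $\Pt$-a.e.\ $Y$, convergence of the partial sum to $-\upsilon_\beta^2 t\sum_{l=1}^M f(l,\lambda)(1-\upsilon_\beta)^{l-1}$.

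The main obstacle is controlling the tail $l>M$ uniformly in $n$. Here I would use Jensen to bound $-\log\Er[\hat\omega_0(\lambda/a_n)^l]\leq -l\log\Er[\hat\omega_0(\lambda/a_n)]\leq Cl\lambda^\alpha/n$, which shows $\sum_l f(l,\lambda)(1-\upsilon_\beta)^{l-1}$ converges geometrically, and then estimate $|\Rc_l(nt)|\leq|\Rc_l^+(nt)|$ by Markov's inequality together with the geometric stochastic domination $\Pt(\Lc(0,m)\geq l)\leq(1-\upsilon_\beta)^{l-1}$, which bounds $\Et[|\Rc_l^+(nt)|]/nt$ by $C(1-\upsilon_\beta)^l$. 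A Markov-plus-union-bound argument then makes $\Pt(|\Rc_l(nt)|/nt\geq e^{-c_1l}\text{ for some }l\geq M)$ exponentially small in $M$ uniformly in $n$, so the tail of the exponent is negligible in $\Pt$-probability.

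Finally I would deduce that $\sum_{l\geq 1}|\Rc_l(nt)|\log\Er[\hat\omega_0(\lambda/a_n)^l]$ converges to $-\upsilon_\beta^2 t\sum_{l\geq 1}f(l,\lambda)(1-\upsilon_\beta)^{l-1}$ in $\Pt$-probability, and since the conditional Laplace transform is bounded by $1$, bounded convergence applied after removing the conditioning on $Y$ gives the claimed annealed limit.
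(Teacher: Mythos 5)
Your proposal is correct and follows essentially the same route as the paper's proof: conditioning on $Y$ to factorise the Laplace transform over sites, grouping by the level sets $\Rc_l(nt)$, invoking Pitman's local-time law of large numbers together with \eqref{StabCon} for the first $M$ terms, controlling the tail via the Jensen bound and a Markov-plus-union-bound estimate on $|\Rc_l^+(nt)|$, and concluding with bounded convergence. No gaps to flag.
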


Before proving the main result we state a technical lemma which allows us to compare the process with a version in which we re-sample the environment at fixed times. This will allow us to show that the limiting process has independent increments. The result follows similarly to \cite[Lemma 3.2]{cewa15} therefore we omit the proof.

Fix $d\in \Nb$, $0<t_1<...<t_d=t$ and let $\overline{\omega}:=(\omega_x^j)_{x \in \Zb, j=1,...,d}$ be a sequence of i.i.d.\ $\pi$ random measures. For $x \in \Zb$, $j=1,...,d$ and $i\geq 1$ let $\eta_{x,i}^j$ be independent with law $\omega_x^j$. Let $j(x)$ be such that $nt_{j(x)-1}\leq \Delta^Y_x< nt_{j(x)}$ denote the index of the interval in which $x$ is first reached. Define 
\[S'_m:=\sum_{x \in \Zb}\sum_{i=1}^{\Lc(x,m-1)}\eta_{x,i}^{j(x)}\]
then $S'\ed S$ with respect to the annealed law $\Pb$. The clock process $S'_m$ can be thought of as the sum of the first $m$ holding times where we refresh the entire unseen environment at times $nt_j$. We then define the approximation of $S'$: 
\[S''_m=\sum_{j=1}^d\sum_{k=nt_{j-1}}^{(m\land nt_j)-1} \eta_{Y_k,\Lc(Y_k,k)}^j.\]
The process $S''_m$ can be thought of as the sum of the first $m$ holding times where we refresh the entire environment at times $nt_j$. By independence of the environment and the walk between times $nt_{j-1}$ and $nt_j$ we have that the differences $S''_{nt_j}-S''_{nt_{j-1}}$ are independent.
\begin{lem}\label{l:indinc}
Under the assumptions of Theorem \ref{t:Sstab} we have that for any $\varepsilon>0$,
\[\Pb\left(\sup_{s \leq t}|S''_{ns}-S'_{ns}|>\varepsilon a_n\right)\]
converges to $0$ as $\nin$.
\end{lem}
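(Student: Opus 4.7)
The plan is to reduce the problem to controlling the holding times incurred at ``bad'' steps, namely those indices $k$ at which the walker visits a vertex first seen during a strictly earlier interval. Writing $j(k)$ for the index $j$ with $nt_{j-1}\leq k<nt_j$, whenever $j(Y_k)=j(k)$ both processes advance by the same increment $\eta^{j(k)}_{Y_k,\Lc(Y_k,k)}$, so by monotonicity
\[
\sup_{s \leq t}\bigl|S''_{ns}-S'_{ns}\bigr| \;\leq\; \sum_{k=0}^{\lfloor nt\rfloor-1}\ind\{j(Y_k)<j(k)\}\Bigl(\eta^{j(k)}_{Y_k,\Lc(Y_k,k)}+\eta^{j(Y_k)}_{Y_k,\Lc(Y_k,k)}\Bigr).
\]
Write $N_n$ for the number of bad indices $k<\lfloor nt\rfloor$.

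First I would show that $N_n$ is tight in $n$. Since $d$ is fixed it is enough to bound the contribution from each of the $d-1$ interior boundaries separately. A bad index associated with boundary $nt_{j-1}$ requires the walker, after time $nt_{j-1}$, to occupy a vertex of the past range $[\min_{i<nt_{j-1}}Y_i, M_{j-1}]$, where $M_{j-1}:=\max_{i<nt_{j-1}}Y_i$. Since $\beta>1$ the embedded walk is transient with speed $\upsilon_\beta>0$, which entails: (i) $M_{j-1}-Y_{nt_{j-1}}$ and $Y_{nt_{j-1}}-\min_{k\geq nt_{j-1}}Y_k$ are each tight in $n$ with geometrically decaying tails (by the strong Markov property and the fact that leftward steps to fresh vertices have probability $1/\beta$); and (ii) the local time accumulated after $nt_{j-1}$ at any past vertex is stochastically dominated by a Geometric$(\upsilon_\beta)$ random variable. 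Summing the resulting $O(1)$ contributions over the finitely many boundaries gives tightness of $N_n$; this parallels \cite[Lemma 3.2]{cewa15}.

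Next I would apply a truncation. Fix $\varepsilon>0$ and a large integer $K$, and set $T_n:=\varepsilon a_n/(4K)$. From $1-e^{-x}\geq(1-e^{-1})\ind\{x>1\}$ and assumption (\ref{StabCon}) at $l=1$, namely $1-\Eb[e^{-\mu\eta_0/a_n}]\sim f(1,\mu)/n\asymp\mu^{\alpha}/n$, it follows that $\Pb(\eta_0>a_n/\mu)=O(\mu^{\alpha}/n)$ for fixed $\mu>0$, and hence $\Pb(\eta_0>T_n)=O(K^{\alpha}/n)$. Conditional on $Y$ and on $\{N_n\leq K\}$, a union bound over the at most $2K$ holding times in question (each with annealed marginal equal in law to $\eta_0$, regardless of any sharing of the underlying environment at a revisited vertex) gives
\[
\Pb\bigl(\{N_n\leq K\}\cap\{\text{some bad holding time exceeds }T_n\}\bigr)\;\leq\;2K\,\Pb(\eta_0>T_n)\to 0,
\]
while on the complementary event the displayed sum above is at most $2KT_n=\varepsilon a_n/2<\varepsilon a_n$. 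Combining,
\[
\Pb\Bigl(\sup_{s\leq t}|S''_{ns}-S'_{ns}|>\varepsilon a_n\Bigr)\;\leq\;\Pb(N_n>K)+2K\,\Pb(\eta_0>T_n),
\]
so letting $n\to\infty$ and then $K\to\infty$ yields the claim.

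The main obstacle is the tightness of $N_n$: the cross-interval revisit count must be controlled uniformly in $n$ in spite of the heavy-tailed holding-time behaviour. This is handled purely at the level of the embedded walk $Y$, which is independent of $\omega$, and transience together with positive drift makes the required geometric bounds explicit. Everything else is routine truncation based on the Laplace-transform hypothesis.
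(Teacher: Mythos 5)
Your proof is correct and is essentially the argument the paper intends: the paper omits the proof of this lemma, deferring to \cite[Lemma 3.2]{cewa15}, and your reduction to the cross-boundary ``bad'' steps --- whose number $N_n$ is tight uniformly in $n$ by transience of the biased embedded walk (geometric tails for the backtrack depth $M_{j-1}-Y_{nt_{j-1}}$ and for the local times at previously visited vertices), followed by truncation of the at most $2K$ offending holding times at level $\varepsilon a_n/(4K)$ via the $l=1$ case of \eqref{StabCon} --- is precisely the adaptation of that argument to the present biased one-dimensional setting. Two cosmetic remarks: the geometric tail of the backtrack depth is more cleanly justified by a last-visit-to-the-running-maximum decomposition (or the regeneration estimates of \cite{depeze96}) than by the ``leftward steps to fresh vertices'' heuristic, and the convergence $\Pb\left(\eta_0>\varepsilon a_n/(4K)\right)\to 0$ rests on \eqref{StabCon} and so, as elsewhere in the proof of Theorem \ref{t:Sstab}, holds along the subsequence $n_k$.
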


Using these two results we can conclude the proof of Theorem \ref{t:Sstab} by applying results of \cite{wh02}.
 \begin{proof}[Proof of Theorem \ref{t:Sstab}]
We begin by completing the result for the clock process $S_t$. By Proposition \ref{p:Sstab} we have that the one-dimensional distributions of $S_{n_kt}/a_{n_k}$ converge and the limit $\Sc_t$ is $\Pb$-a.s.\ finite for any $t>0$. Moreover, $\Pb$-a.s.\ we have that $\Sc_0=0$ since 
\begin{flalign}\label{e:tZero}
\lim_{t \rightarrow 0}\Eb[\exp(-\lambda\Sc_t)] =1.
\end{flalign}
It then follows from \textit{equivalent characterisations of convergence for monotone functions} \cite[Theorem 13.6.3]{wh02} and that $S_{n_kt}$ in increasing in $t$ that $S_{n_kt}/a_{n_k}$ converges in distribution to the process $\Sc_t$ with respect to the $M_1$ topology. 

The clock process $S_m$ has stationary increments because the traps in the environment are i.i.d.\ therefore $\Sc$ also has stationary increments. By Lemma \ref{l:indinc} we have that $\Sc$ has independent increments. To show that $\Sc$ is a L\'{e}vy process it remains to show that for every $\varepsilon>0$ and $t\geq 0$ we have that $\lim_{h\rightarrow 0^+}\Pb(|\Sc_{t+h}-\Sc_t|>\varepsilon)=0$. This follows from \eqref{e:tZero} since $\Sc$ has stationary and independent increments.
 
The limit process $\Sc_t$ belongs to $D([0,\infty),\Rb)$, is unbounded and strictly increasing.
By \textit{continuity of the inverse at strictly increasing functions} \cite[Corollary 13.6.4]{wh02}, the inverse map between unbounded, increasing functions in $D_{M_1}$ onto $D_{U}$ is continuous at unbounded, strictly increasing functions. 
Therefore, the sequence $S^{-1}_{a_{n_k}t}/n_k$ converges in $\Pb$-distribution on $D_{U}$ to $\Sc^{-1}_t$. 
 
The main result then follows from \textit{continuity of composition at continuous limits} \cite[Theorem 13.2.1]{wh02} since $Y_{nt}/n$ converges $\Pb$-a.s.\ on $D_{U}([0,\infty),\Rb)$ to the process $t\upsilon_\beta$ and $X_{a_{n_k}t}/n_k=Y_{S^{-1}_{a_{n_k}t}}/n_k$.
 
We want to show that if the convergence in \eqref{StabCon} holds along all subsequences then $f(l,\lambda)=g(l)\lambda^\alpha$ for a function $g\not\equiv0$. Notice that
\begin{flalign}\label{e:fForm}-n\log\left(\Er\left[\Et^\omega\left[\exp\left(-\frac{\lambda}{a_{n}}\eta_0\right)\right]^l\right]\right)=-n\log\left(\Er\left[\Et^\omega\left[\exp\left(-\frac{1}{(n\lambda^{-\alpha})^{1/\alpha}L(n\lambda^{-\alpha})}\frac{L(n\lambda^{-\alpha})}{L(n)}\eta_0\right)\right]^l\right]\right).
\end{flalign}
Since $L$ is slowly varying, for $\lambda>0$ fixed we have that $L(n\lambda^{-\alpha})/L(n)$ converges to $1$ as $\nin$. In particular, using computations similar to Proposition \ref{p:Sstab}, it is straightforward to show that, for each $l\in\Nb$, the right hand side of \eqref{e:fForm} converges to $f(l,1)\lambda^\alpha$. By assumption we have that the left hand side of \eqref{e:fForm} converges to $f(l,\lambda)$ therefore we indeed have that $f(l,\lambda)=g(l)\lambda^\alpha$ for $g(l)=f(l,1)$.

To show that $\Sc_t$ is an $\alpha$-stable subordinator it suffices to note that $\Sc_t$ is increasing and self-similar:
\begin{flalign*}
\Sc_t \;=\; \limn \frac{S_{nt}}{a_n} \;=\; \limn \frac{a_{\lambda n}}{a_n}\cdot \frac{S_{\lambda n\frac{t}{\lambda}}}{a_{\lambda n}} \;\ed\; \Sc_{t/\lambda}\limn \frac{a_{\lambda n}}{a_n} \;=\; \lambda^{1/\alpha}\Sc_{t/\lambda}.
\end{flalign*}
\end{proof}

\section{Ballistic regime}\label{s:sdf}
In this section we consider the regime in which $\Eb[\eta_0]<\infty$ but $\Eb[\eta_0^2]=\infty$. In this setting we do not have a central limit theorem for $X$; however, by \cite[Corollary 2.2]{bo18_1} we have that if $\beta>1$ then $X_{nt}n^{-1}$ converges $\Pb$-a.s.\ to $\nu_\beta t$ where $\nu_\beta$ is given in \eqref{e:spd}. Recall that we define the first hitting times $\Delta_m:=\inf\{t\geq 0:X_t=m\}$. The main aim of the section is to prove Theorem \ref{t:fluc} which shows that $(\Delta_{nt\nu_\beta}-nt)/a_n$ and $(X_{nt}-nt\nu_\beta)/a_n$ converge jointly to a pair of L\'{e}vy processes under certain assumptions on the distribution of the holding times and for a suitable scaling $a_n$.

Our strategy will be as follows. We begin by using a regeneration structure in order to approximate $\Delta_{nt}$ by an i.i.d.\ sum of random variables. We then prove convergence of the Laplace transforms of these variables. This completes the results for $\Delta_{nt}$ which we then extend to the inverse by using standard inverse results for processes with linear centring. We then conclude the result for $X_{nt}$ by showing that it does not deviate too far from an appropriate transformation of the inverse of $\Delta_{nt}$.

We begin by exploiting the renewal structure of the walk. Let $\zeta^Y_0=0$ and, for $j=1,2,...$, define $\zeta^Y_j:=\inf\{m>\zeta^Y_{j-1}: \; \{Y_r\}_{r=0}^{m-1} \cap \{Y_r\}_{r=m}^\infty = \emptyset \}$ to be the regeneration times of the walk $Y$. We then have that $\zeta^X_j:=S_{\zeta^Y_j}$ for $j\geq 1$ are regeneration times for $X$, $\varrho_j:=X_{\zeta^X_j}=Y_{\zeta^Y_j}$ are regeneration points and we write 
\begin{flalign*}
\chi^+_j & \;:=\; \nu_\beta\left((\zeta_j^X-\zeta_{j-1}^X)-\Eb[\eta_0](\zeta^Y_j-\zeta^Y_{j-1})\right), \\
\chi^-_j & \;:=\; \upsilon_\beta(\zeta^Y_j-\zeta^Y_{j-1})-(\varrho_j-\varrho_{j-1}), \\
\chi_j&\;:=\; \nu_\beta\left(\zeta^X_j-\zeta_{j-1}^X\right)-(\varrho_j-\varrho_{j-1})\;=\;\chi^+_j+\chi^-_j.
\end{flalign*}
Similarly to \cite[Lemma 2.3]{bo18_1} and using a law of large numbers, we have that the families  $\{\chi_j^+\}_{j\geq 2}$, $\{\chi_j^-\}_{j\geq 2}$ and $\{\chi_j\}_{j\geq 2}$ are centred and i.i.d.\ under $\Pb$ whenever $\beta>1$ and $\Eb[\eta_0]<\infty$. In particular, 
\[\Sigma_{m}:= \sum_{j=2}^{m}\chi_j=\left(\zeta_{m}^X\nu_\beta-X_{\zeta_{m}^X}-\right) -\left(\zeta_1^X\nu_\beta-X_{\zeta_1^X}\right)\]
is a sum of i.i.d.\ centred random variables which we will later use to approximate $\Delta_{nt\nu_\beta}-nt$ along a certain time change. We first show that $\Sigma_{n_kt}/a_{n_k}$ converges to a L\'{e}vy process which we begin, in Lemma \ref{l:KillY}, by showing that $\{\chi_j^-\}_{j\geq 2}$ are negligible. 

\begin{lem}\label{l:KillY}
Suppose $\beta>1$ and $T<\infty$. If $a_n>\!>n^{1/2}$ then 
\[\sup_{t\leq T}\;\left|\sum_{j=2}^{nt}\frac{\chi_j^-}{a_n}\right|\]
converges in $\Pb$-probability to $0$ as $\nin$.
\begin{proof}
  Define 
  \[\Sigma_m^-:=\sum_{j=2}^m\chi_j^-\]
  which is a martingale with centred and independent jumps. The time and distance between regenerations of a biased random walk on $\Zb$ have exponential moments by \cite[Lemma 5.1]{depeze96} therefore $\chi_j^-$ also have exponential moments. 
  
By Doob's inequality we then have that for $\varepsilon>0$,
\begin{flalign*}
\Pb\left(\sup_{m\leq nT}|\Sigma_m^-|\geq \varepsilon a_n\right)  \;\leq\; \frac{\Eb[\Sigma_{nT}^-]}{\varepsilon^2a_n^2} \;\leq\; \frac{C_{T,\varepsilon}n}{a_n^2}\Eb\left[\left(\chi_2^-\right)^2\right]
\end{flalign*}
which converges to $0$ as $\nin$ since $n/a_n^2\rightarrow 0$ and $\chi_2^-$ has exponential moments.
\end{proof}
\end{lem}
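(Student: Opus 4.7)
The plan is to exploit the fact that $\chi_j^- = \upsilon_\beta(\zeta_j^Y-\zeta_{j-1}^Y)-(\varrho_j-\varrho_{j-1})$ depends only on the embedded simple random walk $Y$, not on the trapping environment at all, so the well-developed regeneration theory for the biased SRW on $\Zb$ can be applied directly. As already noted just before the statement, the family $\{\chi_j^-\}_{j\geq 2}$ is i.i.d.\ and centred, hence the partial sum
\[\Sigma_m^- \;:=\; \sum_{j=2}^m \chi_j^-\]
is a discrete-time martingale with respect to its natural filtration, with centred and independent increments. This martingale structure, combined with a second-moment estimate, is the natural tool to control the running supremum.

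The key ingredient I would invoke is the exponential-moments bound for the regeneration time and displacement of a biased nearest-neighbour walk on $\Zb$, which is the content of \cite[Lemma~5.1]{depeze96}. This immediately gives $\Eb[(\chi_2^-)^2]<\infty$ (in fact all moments), since $\chi_2^-$ is a linear combination of the regeneration time $\zeta_2^Y-\zeta_1^Y$ and the regeneration displacement $\varrho_2-\varrho_1$.

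With these two facts in hand, the estimate is a one-line application of Doob's $L^2$ maximal inequality: for $\varepsilon>0$,
\[\Pb\!\left(\sup_{m\leq nT}|\Sigma_m^-|\geq \varepsilon a_n\right) \;\leq\; \frac{\Eb[(\Sigma_{\lfloor nT\rfloor}^-)^2]}{\varepsilon^2 a_n^2} \;=\; \frac{(\lfloor nT\rfloor - 1)\,\Var(\chi_2^-)}{\varepsilon^2 a_n^2} \;\leq\; \frac{C_{T,\varepsilon}\,n}{a_n^2}.\]
Since the hypothesis $a_n \gg n^{1/2}$ is exactly the statement that $n/a_n^2 \to 0$, the right-hand side vanishes as $n\to\infty$, and noting that $\sup_{t\leq T}|\sum_{j=2}^{\lfloor nt\rfloor}\chi_j^-|/a_n = \sup_{m\leq \lfloor nT\rfloor}|\Sigma_m^-|/a_n$ finishes the argument.

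There is no real obstacle here; the lemma is essentially a routine concentration estimate, whose only content is the pairing of the martingale property with the exponential-moment input from the biased SRW literature. The more substantive work is deferred to the companion lemmas that must handle $\chi_j^+$ (where the scaling is genuinely delicate and depends on the assumption \eqref{LogAss}), and it is the purpose of this lemma merely to dispose of the embedded-walk contribution so that one may concentrate on $\chi_j^+$ thereafter.
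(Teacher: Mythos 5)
Your proposal is correct and follows essentially the same route as the paper: the observation that $\chi_j^-$ depends only on the embedded walk, the i.i.d.\ centred structure making $\Sigma_m^-$ a martingale, Doob's $L^2$ maximal inequality, and the exponential-moment input from \cite[Lemma 5.1]{depeze96} combined with $n/a_n^2\rightarrow 0$. Your write-up in fact states the second-moment bound $\Eb[(\Sigma^-_{\lfloor nT\rfloor})^2]$ more carefully than the paper's displayed inequality, so there is nothing to add.
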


We now turn to proving convergence of $\Sigma_{m}^+:=\sum_{j=2}^{m}\chi_j^+$ after suitable scaling which we do by considering the Laplace transform. We first prove a technical lemma that will allow us to use dominated convergence in the proof of this result.
\begin{lem}\label{l:UpLw}
  Under the assumptions of Theorem \ref{t:fluc} there exist constants $c$, $C$ depending only on $\lambda$ such that for sufficiently large $k$ (independently of $l$)
\[ cl \; \leq \; n_k\log\left(\Er\left[\Et^\omega\left[\exp\left(-\frac{\lambda}{a_{n_k}}(\eta_0-\Eb[\eta_0])\right)\right]^l\right]\right) \; \leq \; Cl^2.\]
\begin{proof}
For simplicity we write $n$ for $n_k$ with the understanding that we consider subsequences along which \eqref{LogAss} holds.
By Jensen's inequality, for a positive random variable $Z$ and $l \in \Zb$ satisfying $\Eb[Z^l]<\infty$ we have
\begin{flalign}\label{LogBnd}
l\log\left(\Er\left[\Et^\omega\left[Z\right]\right]\right)\;\leq \; \log\left(\Er\left[\Et^\omega\left[Z\right]^l\right]\right) \; \leq \; \log\left(\Er\left[\Et^\omega\left[Z^l\right]\right]\right).
\end{flalign}
Choose 
\[Z(\lambda)=\exp\left(-\frac{\lambda}{a_n}(\eta_0-\Eb[\eta_0])\right) \quad\mathrm{then}\quad  Z(\lambda)^l=\exp\left(-\frac{l\lambda}{a_n}(\eta_0-\Eb[\eta_0])\right)=Z(l\lambda)\]
however, by (\ref{LogAss}),
\[n\log\left(\Er\left[\Et^\omega\left[Z\right]^l\right]\right)\sim f(l,\lambda) \qquad \mathrm{and} \qquad n\log\left(\Er\left[\Et^\omega\left[Z^l\right]\right]\right)\sim f(1,l\lambda)\leq C(l\lambda)^\alpha.\]
The bound (\ref{LogBnd}) then implies that $cl\lambda^\alpha\leq f(l,\lambda)\leq C(l\lambda)^\alpha$ for positive constants $c,C$. In particular, $f(l,\lambda)>0$ for all $\lambda>0$. 

The lower bound in the statement follows from (\ref{LogAss}) with $l=1$ and \eqref{LogBnd}:
\begin{flalign*}
n\log\left(\Er\left[\Et^\omega\left[\exp\left(-\frac{\lambda}{a_n}(\eta_0-\Eb[\eta_0])\right)\right]^l\right]\right) 
 \geq ln\log\left(\Er\left[\Et^\omega\left[\exp\left(-\frac{\lambda}{a_n}(\eta_0-\Eb[\eta_0])\right)\right]\right]\right) 
 \geq c_\lambda l.
\end{flalign*}

For the upper bound, using \eqref{LogBnd} we have that
 \begin{flalign}
n\log\left(\Er\left[\Et^\omega\left[\exp\left(-\frac{\lambda}{a_n}(\eta_0-\Eb[\eta_0])\right)\right]^l\right]\right)
 \leq 
    \frac{n\lambda l\Eb[\eta_0]}{a_n}+n\log\left(\Eb\left[\exp\left(-\frac{\lambda l}{a_n}\eta_0\right)\right]\right). \label{e:Jen}
 \end{flalign}
 
Using that $e^{-y}\leq 1- y+y^2/2$ for $y\geq 0$, we have
\begin{flalign*}
&\Eb\left[\exp\left(-\frac{\lambda l}{a_n}\eta_0\right)\right] - 1+\frac{\lambda l}{a_n}\Eb[\eta_0] \\
&\qquad = \Eb\left[\left(\exp\left(-\frac{\lambda l}{a_n}\eta_0\right)-1+\frac{\lambda l}{a_n}\Eb[\eta_0]\right)\ind_{\{\eta_0>a_n\}}\right] 
+\Eb\left[\left(\exp\left(-\frac{\lambda l}{a_n}\eta_0\right)-1+\frac{\lambda l}{a_n}\Eb[\eta_0]\right)\ind_{\{\eta_0\leq a_n\}}\right] \\
& \qquad \leq \lambda l\Eb\left[\frac{\eta_0}{a_n}\ind_{\{\eta_0> a_n\}}\right] 
+\left(\lambda l\right)^2\Eb\left[\left(\frac{\eta_0}{a_n}\right)^2\ind_{\{\eta_0\leq a_n\}}\right].
\end{flalign*}
It then follows by properties of stable laws (cf.\ \cite[Chapter XVII.5]{fe71}) that the sequences
\[   n\Eb\left[ \frac{\eta_0}{a_n}\ind_{\{\eta_0\geq a_n\}}\right]\quad \text{and}\quad n\Eb\left[ \left(\frac{\eta_0}{a_n}\right)^2\ind_{\{\eta_0\leq a_n\}}\right]\]
are bounded above. We therefore have that, for some constant $C$, 
\begin{flalign}\label{e:cont}
\Eb\left[\exp\left(-\frac{\lambda l}{a_n}\eta_0\right)\right] \leq 1 - \frac{\lambda l \Eb[\eta_0]}{a_n} +\frac{Cl^2}{n}
\end{flalign}
uniformly over $l\geq 1$.

We want to show that we can choose $N_0$ sufficiently large such that the right-hand side of (\ref{e:cont}) is strictly larger than zero for any $k\geq 1$ and $n\geq N_0$. Since $\alpha\in(1,2)$ we have that $n/a_n^2\rightarrow 0$ as $\nin$ therefore fix $N_0$ large enough such that, for $n\geq N_0$,
\[\frac{n(\lambda\Eb[\eta_0])^2}{Ca_n^2}<\frac{1}{2}.\]
We can write
\[1 - \frac{\lambda l \Eb[\eta_0]}{a_n} +\frac{Cl^2}{n}=1-\frac{l}{a_n}\left(\lambda\Eb[\eta_0]-\frac{Cla_n}{n}\right)\]
where the term in the brackets is only positive if $l<\lambda\Eb[\eta_0]n/(Ca_n)$. This means that 
\[\frac{l}{a_n}\left(\lambda\Eb[\eta_0]-\frac{Cka_n}{n}\right) \;\leq\; \frac{\lambda\Eb[\eta_0]n}{Ca_n^2}\cdot\lambda\Eb[\eta_0] \;\leq\; \frac{1}{2}\]
for any $n\geq N_0$. This shows that the right-hand side of (\ref{e:cont}) is strictly larger than zero for any $l\geq 1$ and $n\geq N_0$.

Since $\log(1+x)\leq x$ for $x>-1$ we then have that 
\[n\log\left(\Eb\left[\exp\left(-\frac{\lambda l}{a_n}\eta_0\right)\right]\right)\leq - \frac{n\lambda l\Eb[\eta_0]}{a_n} +Cl^2. \]
Combining this with (\ref{e:Jen}) we then have that 
\[ n\log\left(\Er\left[\Et^\omega\left[\exp\left(-\frac{\lambda}{a_n}(\eta_0-\Eb[\eta_0])\right)\right]^l\right]\right)\leq Cl^2.\]
\end{proof}
\end{lem}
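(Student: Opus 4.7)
The plan is to sandwich the quantity of interest using the two natural Jensen bounds that move the power $l$ across the quenched expectation: for any nonnegative random variable $Z$ and integer $l \geq 1$,
\[ l \log \Er\bigl[\Et^\omega[Z]\bigr] \;\leq\; \log \Er\bigl[\Et^\omega[Z]^l\bigr] \;\leq\; \log \Er\bigl[\Et^\omega[Z^l]\bigr] \;=\; \log \Eb[Z^l]. \]
Taking $Z = \exp(-\lambda(\eta_0 - \Eb[\eta_0])/a_{n_k})$, the left inequality combined with hypothesis \eqref{LogAss} at $l=1$ (and the fact that $f(1,\lambda) \asymp \lambda^\alpha$ is strictly positive) immediately yields the linear lower bound $c_\lambda l$ for all $k$ large enough. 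The upper bound reduces to controlling $n_k \log \Eb[\exp(-l\lambda(\eta_0-\Eb[\eta_0])/a_{n_k})]$ uniformly in $l \geq 1$, which is the heart of the matter.

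\textbf{Centring and Taylor estimate.} Writing $\mu := \Eb[\eta_0]$, I would first pull out the mean,
\[ \Eb\bigl[e^{-l\lambda(\eta_0 - \mu)/a_{n_k}}\bigr] = e^{l\lambda\mu/a_{n_k}}\, \Eb\bigl[e^{-l\lambda\eta_0/a_{n_k}}\bigr], \]
and bound the remaining Laplace transform via the pointwise inequality $e^{-y} - 1 + y \leq (y^2/2) \wedge y$ for $y \geq 0$, splitting the integration at $\{\eta_0 \leq a_{n_k}\}$ versus $\{\eta_0 > a_{n_k}\}$. This yields
\[ \Eb\bigl[e^{-l\lambda\eta_0/a_{n_k}}\bigr] \leq 1 - \frac{l\lambda\mu}{a_{n_k}} + (l\lambda)^2\, \Eb\!\left[\frac{\eta_0^2}{a_{n_k}^2}\ind_{\{\eta_0 \leq a_{n_k}\}}\right] + l\lambda\, \Eb\!\left[\frac{\eta_0}{a_{n_k}}\ind_{\{\eta_0 > a_{n_k}\}}\right]. \]
Since $f(1,\lambda) \asymp \lambda^\alpha$ forces $\eta_0$ into the domain of attraction of an $\alpha$-stable law with $\alpha \in (1,2)$, Feller's classical truncated moment estimates (Chapter XVII.5) make both scaled moments, multiplied by $n_k$, uniformly bounded. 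Thus I obtain
\[ \Eb\bigl[e^{-l\lambda\eta_0/a_{n_k}}\bigr] \leq 1 - \frac{l\lambda\mu}{a_{n_k}} + \frac{C l^2}{n_k}. \]

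\textbf{Passing to the logarithm and cancellation.} The delicate step is moving to $\log$: I want to apply $\log(1+x) \leq x$ for $x > -1$, which requires verifying positivity of the right-hand side uniformly in $l \geq 1$ once $k$ is large. Rewriting it as $1 - (l/a_{n_k})\bigl(\lambda\mu - C l\, a_{n_k}/n_k\bigr)$, the bracketed factor is nonpositive exactly when $l \geq \lambda\mu n_k/(C a_{n_k})$, and in that regime the full expression is at least $1$. When the bracket is positive, it is bounded by $\lambda\mu$, and the subtracted quantity is at most $(l/a_{n_k})\lambda\mu \leq (\lambda\mu)^2 n_k/(C a_{n_k}^2)$, which vanishes because $\alpha > 1$ forces $n_k/a_{n_k}^2 \to 0$. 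Fixing $N_0$ to push this ratio below $1/2$ gives positivity for every $l \geq 1$ and every $k$ with $n_k \geq N_0$, and $\log(1+x) \leq x$ then yields
\[ n_k \log \Eb\bigl[e^{-l\lambda\eta_0/a_{n_k}}\bigr] \leq -\frac{n_k l \lambda\mu}{a_{n_k}} + C l^2. \]
Adding back the $n_k l \lambda\mu / a_{n_k}$ contributed by the centring exponential cancels the diverging linear-in-$l$ term and leaves the desired $Cl^2$ bound.

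\textbf{Main obstacle.} The principal difficulty is uniformity in $l$: the Taylor estimate produces a term of size $n_k l\lambda\mu/a_{n_k}$ that diverges with $k$ for each fixed $l$, and only cancels with the centring factor \emph{after} the logarithm is taken. Ensuring that the logarithm is applied to a positive quantity for arbitrarily large $l$ demands the case split above together with the structural input $n_k/a_{n_k}^2 \to 0$, which is available precisely because $\alpha \in (1,2)$.
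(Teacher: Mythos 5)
Your proposal is correct and follows essentially the same route as the paper: the Jensen sandwich to move the power $l$, the lower bound from \eqref{LogAss} at $l=1$, and the upper bound via the truncated second-moment Taylor estimate, Feller's stable-law bounds, the positivity check using $n_k/a_{n_k}^2\to 0$ so that $\log(1+x)\leq x$ applies uniformly in $l$, and the cancellation of the linear term against the centring factor. No gaps to report.
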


We now show that the scaled sum of the random variables $\chi_j^+$ converges along the given subsequences. Let $\Ic:=\{Y_j\}_{j=\zeta^Y_1}^{\zeta^Y_2-1}$ denote the set of vertices in the first regeneration block and for $l=1,2,...$ let $\Ic_l:=\{x \in \Ic: \; \Lc(x,\infty)=l\}$ denote those visited exactly $l$ times.   
\begin{prp}\label{p:DoA}
 Under the assumptions of Theorem \ref{t:fluc} 
 \[\lim_{k\rightarrow \infty}\Eb\left[\exp\left(-\frac{\lambda}{a_{n_k}}\sum_{j=2}^{tn_k}\chi^+_j\right)\right]=\exp\left(t\sum_{l=1}^\infty f(l,\lambda\nu_\beta)\Et[|\Ic_l|]\right).\]
\begin{proof}
As in Lemma \ref{l:UpLw} we write $n$ for $n_k$ with the understanding that we consider subsequences along which \eqref{LogAss} holds.
Since $\chi_j^+$ are i.i.d.\ we have that
\begin{flalign*}
\Eb\left[\exp\left(-\frac{\lambda}{a_n}\sum_{j=2}^{tn}\chi^+_j\right)\right] = \left(\frac{n\Eb\left[\exp\left(-\frac{\lambda}{a_n}\chi^+_2\right)-1\right]}{n}+1\right)^{tn-1}
\end{flalign*}
therefore it suffices instead to show that 
\begin{equation*}
\limn n\Eb\left[\exp\left(-\frac{\lambda}{a_n}\chi^+_2\right)-1\right]=\sum_{l=1}^\infty f(l,\lambda)\Et[|\Ic_l|].
\end{equation*}

We first show that $\Et\left[|\Ic_l|\right]$ decays exponentially in $l$. Since $\varrho_1\geq 1$ we have that
\[\Et\left[|\Ic_l|\right] \; \leq \; \Et\left[\sum_{x \in \Ic}\ind_{\{\Lc(x,\infty)\geq l\}}\right] \; = \; \sum_{x \in \Nb}\Et\left[\ind_{\{x \in \Ic\}}\ind_{\{\Lc(x,\infty)\geq l\}}\right]. \]
Using Cauchy-Schwarz and that $\Lc(x,\infty)$ are identically distributed for $x \in \Nb$ we have that this is bounded above by
\[ \sum_{x \in \Nb}\Pt\left(x \in \Ic\right)^{1/2}\Pt\left(\Lc(x,\infty)\geq l\right)^{1/2} \; = \;  \Pt\left(\Lc(0,\infty)\geq l\right)^{1/2} \sum_{x \in \Nb}\Pt\left(x \in \Ic\right)^{1/2}. \]
The number of returns to the origin is geometrically distributed with probability $\upsilon_\beta$ of moving away from the origin and never returning; therefore, $\Pt\left(\Lc(0,\infty)\geq l\right)= (1-\upsilon_\beta)^{l-1}$. By \cite[Lemma 5.1]{depeze96} we have that $\Pt\left(x \in \Ic\right)\leq \Pt(x \leq \zeta^Y_2) \leq Ce^{-cx}$. It follows that
\begin{flalign}\label{EI}
\Et\left[|\Ic_l|\right] \; \leq \; C\left(1-\upsilon_\beta\right)^{l/2}\sum_{x\geq 0}e^{-cx/2} \; \leq \; C\left(\frac{2}{\beta+1}\right)^{l/2}.
\end{flalign}

Conditional on $Y$, the holding times at different vertices are independent therefore
\begin{flalign*}
\Eb\left[\exp\left(-\frac{\lambda }{\nu_\beta a_n}\chi_2^+\right)\right]
&=\Eb\left[\exp\left(-\frac{\lambda}{a_n}\big(\zeta_2^X-\zeta_1^X-\Eb[\eta_0](\zeta^Y_2-\zeta^Y_1)\big)\right)\right] \\
&=\Eb\left[\exp\left(-\frac{\lambda}{a_n}\left(\sum_{x\in\Ic}\sum_{j=1}^{\Lc(x,\infty)}(\eta_{x,j}-\Eb[\eta_{x,j}])\right)\right)\right]\\
&=\Et\left[\prod_{x\in\Ic}\Eb^Y\left[\exp\left(-\frac{\lambda}{a_n}\left(\sum_{j=1}^{\Lc(x,\infty)}(\eta_{x,j}-\Eb[\eta_{x,j}])\right)\right)\right]\right].
  \end{flalign*}
Under $\Pt^{\omega,Y}$ the holding times at a given vertex are independent and identically distributed therefore we can write the above expression as
 \begin{flalign*}
  &\Et\left[\prod_{x \in \Ic}  \Er^Y\left[  \Et^{\omega,Y}\left[\prod_{j=1}^{\Lc(x,\infty)}  e^{-\frac{\lambda}{a_n}\left(\eta_{x,j}-\Eb[\eta_{x,j}]\right)}\right]\right]\right]
  \; = \; \Et\left[\prod_{k=1}^\infty\prod_{x \in \Ic_l}  \Er^Y\left[  \Et^{\omega,Y}\left[e^{-\frac{\lambda}{a_n}\left(\eta_{x,1}-\Eb[\eta_{x,1}]\right)}\right]^l\right]\right].
 \end{flalign*}
For $x \in \Zb$ let $\tilde{\eta}_x=\eta_{x,1}$ be the first holding time at $x$. Notice that, since $\tilde{\eta}_x$ is independent of $Y$, we have that
\[\Psi_{l,x}:=\Er^Y\left[  \Et^{\omega,Y}\left[e^{-\frac{\lambda}{a_n}\left(\eta_{x,1}-\Eb[\eta_{x,1}]\right)}\right]^l\right]=\Er\left[  \Et^{\omega}\left[e^{-\frac{\lambda}{a_n}\left(\tilde{\eta}_x-\Eb[\tilde{\eta}_x]\right)}\right]^l\right]\]
 is independent of $Y$ and, therefore, deterministic. In particular, since $\omega_x$ are i.i.d.\ under $\Pr$ we have that $\Psi_{l,x}$ does not depend on $x$ and we can write
\begin{flalign}\label{e:eLogPsi}
 \Eb\left[\exp\left(-\frac{\lambda}{ \nu_\beta a_n}\chi_2^+\right)\right]
 = \Et\left[\exp\left(\sum_{l=1}^\infty |\Ic_l|\log(\Psi_{l,0})\right)\right].
\end{flalign}
Furthermore, since $\tilde{\eta}_0\geq0$ we have that $\log(\Psi_{l,0}) \leq l\lambda\Eb[\eta_0]/a_n$.

Using a Taylor expansion we have that for any $x \in \Rb$ there exists $w\in[0,x]$ such that
\[e^x=1+x+\frac{e^w}{2}x^2.\]
It follows from this, and that $\log(\Psi_{l,0}) \leq l\lambda\Eb[\eta_0]/a_n$, that there exists a random variable $W$ which is bounded above by
\[ \frac{\lambda\Eb[\eta_0]}{a_n}\sum_{l=1}^\infty l|\Ic_l|\]
such that
\begin{flalign*}
  n\Et\left[\exp\left(\sum_{l=1}^\infty |\Ic_l|\log(\Psi_{l,0})\right)-1\right]
 \; =\; n\Et\left[\sum_{l=1}^\infty |\Ic_l|\log(\Psi_{l,0})+\frac{e^W}{2}\left(\sum_{l=1}^\infty |\Ic_l|\log(\Psi_{l,0})\right)^2\right].
\end{flalign*}
Using the previous upper bound on $\log(\Psi_{l,0})$ and Cauchy-Schwarz we then have that
\begin{flalign*}
n\Et\!\left[\frac{e^W}{2}\left(\sum_{l=1}^\infty |\Ic_l|\log(\Psi_{l,0})\right)^2\right] 
& \leq \frac{C_\lambda n}{a_n^2}\Et\!\left[\exp\left(\frac{\lambda\Eb[\eta_0]}{a_n}\sum_{l=1}^\infty l|\Ic_l|\right)\left(\sum_{l=1}^\infty l|\Ic_l|\right)^2\right] \\
 & \leq \frac{C_\lambda n}{a_n^2}\Et\!\left[\exp\left(\frac{2\lambda\Eb[\eta_0]}{a_n}\sum_{l=1}^\infty l|\Ic_l|\right)\right]^{1/2}\Et\!\left[\!\left(\sum_{l=1}^\infty l|\Ic_l|\right)^4\right]^{1/2}.
\end{flalign*}

The sum $\sum_{l=1}^\infty l|\Ic_l|$  is the time between regenerations of the embedded walk $Y$. By \cite[Lemma 5.1]{depeze96} this has exponential moments therefore we can choose $n$ sufficiently large such that both of the expectations in the previous equation are finite. Since $a_n=n^{1/\alpha}L(n)$ for $\alpha \in (1,2)$ we have that $n/a_n^2$ converges to $0$ as $\nin$ and therefore
\begin{flalign*}
  n\Et\left[\exp\left(\sum_{l=1}^\infty |\Ic_l|\log(\Psi_{l,0})\right)-1\right]= n\Et\left[\sum_{l=1}^\infty |\Ic_l|\log(\Psi_{l,0})\right]+o(1).
\end{flalign*}

The function $n\log(\Psi_{l,0})$ is deterministic and converges to the positive constant $f(l,\lambda)$. By Lemma \ref{l:UpLw} we have that for $n$ suitably large $cl\leq n\log(\Psi_{l,0})\leq Cl^2$ independently of $l$. It follows from (\ref{EI}) that 
\[\sum_{l=1}^\infty l^2\Et\left[|\Ic_l|\right]<\infty,\]
and therefore by dominated convergence we have that, as $\nin$, 
\begin{flalign*}
 n\Et\left[\sum_{l=1}^\infty |\Ic_l|\log(\Psi_{l,0})\right] \; = \; \sum_{l=1}^\infty \Et\left[|\Ic_l|\right] n\log(\Psi_{l,0}) \; \rightarrow \; \sum_{l=1}^\infty f(l,\lambda)\Et\left[|\Ic_l|\right]. 
\end{flalign*}
Combining with \eqref{e:eLogPsi} completes the proof.
\end{proof}
\end{prp}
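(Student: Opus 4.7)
The plan is to exploit the i.i.d.\ structure of $\{\chi_j^+\}_{j\geq 2}$ under $\Pb$ and then reduce to computing a single-step Laplace transform via the regeneration decomposition. Writing $n$ for $n_k$, independence gives
\[\Eb\!\left[\exp\!\left(-\frac{\lambda}{a_n}\sum_{j=2}^{tn}\chi_j^+\right)\right] = \left(1 + \frac{1}{n}\cdot n\Eb\!\left[\exp\!\left(-\tfrac{\lambda}{a_n}\chi_2^+\right)-1\right]\right)^{tn-1},\]
so it suffices to show $n\Eb[\exp(-\lambda\chi_2^+/a_n)-1]\to \sum_{l\geq 1}f(l,\lambda\nu_\beta)\Et[|\Ic_l|]$. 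I would first condition on the embedded walk $Y$. Since $\chi_2^+/\nu_\beta=\sum_{x\in\Ic}\sum_{j=1}^{\Lc(x,\infty)}(\eta_{x,j}-\Eb[\eta_0])$ and the holding times at distinct vertices are independent under $\Pt^{\omega,Y}$, the conditional Laplace transform factorises over vertices, and grouping by local time yields
\[\Eb^Y\!\left[\exp\!\left(-\tfrac{\lambda}{\nu_\beta a_n}\chi_2^+\right)\right] = \prod_{l\geq 1}\Psi_l^{|\Ic_l|},\qquad \Psi_l:=\Er\!\left[\Et^\omega\!\left[\exp\!\left(-\tfrac{\lambda}{a_n}(\eta_0-\Eb[\eta_0])\right)\right]^l\right].\]
Crucially, $\Psi_l$ is deterministic (the holding times are independent of $Y$ and the $\omega_x$ are i.i.d.), and $n\log\Psi_l\to f(l,\lambda)$ by assumption \eqref{LogAss}. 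The substitution $\lambda\mapsto\lambda\nu_\beta$ accounts for the factor $\nu_\beta$ appearing in the target formula.

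Next I would expand $\Et[\prod_l\Psi_l^{|\Ic_l|}-1]$ via Taylor's theorem. Using $e^x=1+x+(e^w/2)x^2$ for some $w\in[0,x]$ together with the a priori bound $\log\Psi_l\leq l\lambda\Eb[\eta_0]/a_n$ (from $\eta_0\geq 0$) gives
\[n\Et\!\left[\prod_{l\geq 1}\Psi_l^{|\Ic_l|}-1\right] = \sum_{l\geq 1}\Et[|\Ic_l|]\,n\log\Psi_l + \mathrm{error},\]
where the error is dominated by $(Cn/a_n^2)\cdot\Et[\exp(C\sum_l l|\Ic_l|/a_n)]^{1/2}\cdot\Et[(\sum_l l|\Ic_l|)^4]^{1/2}$. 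Since $\sum_l l|\Ic_l|=\zeta_2^Y-\zeta_1^Y$ has exponential moments (a standard estimate for regeneration times of biased random walks on $\Zb$), both factors remain bounded for large $n$, and $n/a_n^2\to 0$ (which uses $\alpha>1$) makes the error $o(1)$.

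The main step is then passing the limit inside $\sum_{l\geq 1}\Et[|\Ic_l|]\,n\log\Psi_l$ by dominated convergence, which requires two ingredients: an exponential-in-$l$ tail $\Et[|\Ic_l|]\leq Cr^l$ with $r<1$, and uniform bounds $cl\leq n\log\Psi_l\leq Cl^2$ valid for large $n$. The first follows from writing $|\Ic_l|=\sum_{x\geq 1}\ind_{\{x\in\Ic,\,\Lc(x,\infty)=l\}}$, applying Cauchy--Schwarz, and combining the geometric tail of $\Lc(0,\infty)$ (termination probability $\upsilon_\beta$) with the exponential tail of $\zeta_2^Y$. The second ingredient is the content of a separate lemma, and I expect its upper bound to be the \textbf{principal obstacle}: Jensen in the unfavourable direction gives
\[n\log\Psi_l \leq \frac{nl\lambda\Eb[\eta_0]}{a_n}+n\log\Eb\!\left[\exp\!\left(-\tfrac{l\lambda}{a_n}\eta_0\right)\right],\]
and the second term must be controlled via $e^{-y}\leq 1-y+y^2/2$ combined with the stable-law truncated-moment estimates that $n\Eb[\eta_0\ind_{\{\eta_0>a_n\}}/a_n]$ and $n\Eb[(\eta_0/a_n)^2\ind_{\{\eta_0\leq a_n\}}]$ are bounded, with care needed to verify positivity of the argument of the log (the problematic regime $l \asymp n/a_n$ must be handled separately using the crude bound $\Psi_l\leq \exp(l\lambda\Eb[\eta_0]/a_n)$). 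Once these bounds are secured, the summable dominator $Cl^2r^l$ justifies interchanging limit and sum to obtain $\sum_l\Et[|\Ic_l|]f(l,\lambda)$; substituting $\lambda\mapsto\lambda\nu_\beta$ and exponentiating completes the proof.
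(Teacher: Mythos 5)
Your proposal is correct and follows essentially the same route as the paper: the i.i.d.\ reduction to $n\Eb[\exp(-\lambda\chi_2^+/a_n)-1]$, conditioning on $Y$ and factorising over vertices grouped by local time into the deterministic quantities $\Psi_l$, the Taylor expansion with error controlled by Cauchy--Schwarz and exponential moments of $\zeta_2^Y-\zeta_1^Y$ together with $n/a_n^2\to0$, and dominated convergence via the exponential decay of $\Et[|\Ic_l|]$ and the bounds $cl\leq n\log\Psi_l\leq Cl^2$ (the paper's Lemma \ref{l:UpLw}, which you correctly identify as the delicate ingredient, including the positivity check for the logarithm). The handling of the $\nu_\beta$ rescaling also matches the paper's bookkeeping.
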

  
By Lemma \ref{l:KillY} and Proposition \ref{p:DoA} we now have the following corollary.
\begin{cly}\label{c:regseq}   
Under the assumptions of Theorem \ref{t:fluc}, the sequence $\Sigma_{n_kt}/a_{n_k}$ converges in distribution on $D_{J_1}$ to a process $\tilde{\Vc}_t^\alpha$ with Laplace exponent $t\sum_{l=1}^\infty f(l,\lambda\nu_\beta)\Et[|\Ic_l|]$. 
\end{cly}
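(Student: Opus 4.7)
The plan is to decompose $\Sigma_m = \Sigma_m^+ + \Sigma_m^-$ with $\Sigma_m^\pm := \sum_{j=2}^m \chi_j^\pm$ and handle each piece separately before combining via a Slutsky-type argument.

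For the $\chi^-$ part, I would observe that $\alpha\in(1,2)$ gives $a_n = n^{1/\alpha}L(n) \gg n^{1/2}$, so Lemma \ref{l:KillY} applies directly and yields $\sup_{t\leq T}|\Sigma_{n_kt}^-|/a_{n_k}\to 0$ in $\Pb$-probability. Convergence to $0$ in the uniform topology embeds into convergence to $0$ in $D_{J_1}$, so it is enough to establish the analogous convergence statement for $\Sigma_{n_k\cdot}^+/a_{n_k}$ and then add the negligible piece back in.

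For the $\chi^+$ part, I would exploit that $(\chi_j^+)_{j\geq 2}$ is an i.i.d.\ centred sequence under $\Pb$, so the problem reduces to a classical functional limit theorem for normalised partial sums of i.i.d.\ random variables. Proposition \ref{p:DoA} identifies the limit of the one-dimensional Laplace transform of $\Sigma_{\lfloor n_kt\rfloor}^+/a_{n_k}$ as $\exp(t\sum_{l\geq 1}f(l,\lambda\nu_\beta)\Et[|\Ic_l|])$. The exponential-in-$t$ form together with the i.i.d.\ block structure forces the weak one-dimensional limit $\tilde{\Vc}^\alpha_t$ to be infinitely divisible, so $\chi_2^+$ lies in the generalised domain of attraction of $\tilde{\Vc}^\alpha_1$. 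The standard functional invariance principle for normalised i.i.d.\ sums (see, for example, Whitt \cite{wh02}, Chapter 4) then promotes this marginal convergence to process-level convergence of $\Sigma_{\lfloor n_k\cdot\rfloor}^+/a_{n_k}$ to the L\'{e}vy process $\tilde{\Vc}^\alpha$ in $D_{J_1}$, with the claimed Laplace exponent.

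The main subtlety I expect is justifying that the convergence occurs in the $J_1$ topology rather than the weaker $M_1$ topology used in Theorem \ref{t:Sstab}. The obstruction there was that a single deep trap could split its contribution across several successive jumps of the discrete clock process, violating the $J_1$ assumption of well-separated large jumps. Here the regeneration structure removes this difficulty: any trap visited during regeneration block $j$ contributes entirely to the single random variable $\chi_j^+$, so no non-trivial splitting can occur in the partial-sum process, and the classical i.i.d.\ functional limit theorem applies in $D_{J_1}$ without modification.
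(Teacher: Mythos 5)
Your proposal is correct and is essentially the paper's own (largely implicit) argument: the corollary is deduced exactly from Lemma \ref{l:KillY} (uniform negligibility of the $\chi^-$ part, valid since $a_n\gg n^{1/2}$ for $\alpha\in(1,2)$) together with Proposition \ref{p:DoA} (Laplace-transform convergence of the i.i.d.\ centred sums $\Sigma^+_{n_kt}/a_{n_k}$), with the classical functional limit theorem for normalised i.i.d.\ partial sums upgrading the marginal convergence to $D_{J_1}$ convergence. Your extra remark on why $J_1$ is unproblematic here — each regeneration block's trapping contribution is absorbed into a single increment $\chi_j^+$ — is precisely the reason the paper can claim $J_1$ for $\Sigma$ while only $M_1$ for $\Delta$ and $X$.
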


We now have the required framework to prove Theorem \ref{t:fluc} which is a convergence result for $(\Delta_{n_kt\nu_\beta}-n_kt)/a_{n_k}$ and $(X_{n_kt}-n_kt\nu_\beta)/a_{n_k}$. The approach we take is to compare $\Delta$ with $\Sigma$ and then use a result of \cite{wh02} that allows us to deduce convergence for the inverse of $\Delta$.

\begin{proof}[Proof of Theorem \ref{t:fluc}]
Write $\psi_t:=\sup\{j\geq 0:\zeta^X_j\leq t\}$ to be the number of regenerations by time $t$. It follows by the law of large numbers and \textit{continuity of the inverse at strictly increasing functions} \cite[Corollary 13.6.4]{wh02} that $\psi_{nt}n^{-1}$ converges $\Pb$-a.s.\ to $R_t:=(\Eb[\eta_0]\Eb[\zeta^Y_2-\zeta^Y_1])^{-1}t$. By Corollary \ref{c:regseq} and \textit{$J_1$-continuity of composition} \cite[Theorem 13.2.2]{wh02} we then have that $\Sigma_{\psi_{tn_k}}/a_{n_k}$ converges in distribution on $D_{J_1}([0,\infty),\Rb)$ to the process $\tilde{\Vc}_{R_t}$.

Recall that 
\[\Sigma_{m}=\left(\zeta^X_{m}\nu_\beta-X_{\zeta^X_{m}}\right) -\left(\zeta_1^X\nu_\beta-\varrho_1\right)\]
where $a_{n_k}^{-1}\left(\zeta_1^X\nu_\beta-\varrho_1\right)$ converges to $0$ in probability. It follows that
\begin{flalign*}
\Sigma_{\psi_{n_kt}}+ \left(\zeta_1^X\nu_\beta-\varrho_1\right) \; = \; \nu_\beta\zeta^X_{\psi_{n_kt}}-X_{\zeta^X_{\psi_{n_kt}}} \; = \; \nu_\beta\left(\Delta_{\varrho_{\psi_{n_kt}}}-\varrho_{\psi_{n_kt}}\nu_\beta^{-1}\right).
\end{flalign*}
Notice that $\varrho_{\psi_{n_kt}}n_k^{-1}$ is bounded above by $t$ and converges $\Pb$-a.s.\ to $t$ since the distance between regenerations has exponential moments by \cite[Lemma 5.1]{depeze96}. In particular, if 
\begin{flalign*}
u_k:=d_{M_1}\left(\left(\frac{\Delta_{\nu_\beta\varrho_{\psi_{n_kt}}}-\varrho_{\psi_{n_kt}}}{a_{n_k}}\right)_{t\in[0,T]},\left(\frac{\Delta_{n_kt\nu_\beta}-n_kt}{a_{n_k}}\right)_{t\in[0,T]}\right) 
\end{flalign*}
converges to $0$ in probability then $(\Delta_{n_kt\nu_\beta}-n_kt)/a_{n_k}$ converges in distribution on $D_{M_1}$ to the process $\Vc_t:=\nu_\beta^{-1}\tilde{\Vc}_{R_{\nu_\beta t}}$. 

Since $\Delta$ is increasing and the regeneration points are ordered we have that
\[\varrho_{\psi_{n_kt}}\leq n_kt \leq \varrho_{\psi_{n_kt}+1} \qquad \text{ and } \qquad \Delta_{\varrho_{\psi_{n_kt}}}\leq \Delta_{n_kt} \leq \Delta_{\varrho_{\psi_{n_kt}+1}}. \]
In particular, $u_k\leq (1+\nu_\beta)(\varrho_{\psi_{n_kt}+1}-\varrho_{\psi_{n_kt}})/a_{n_k}$. Using that there are at most $n_kT$ regenerations by level $n_kT$ we then have that $u_k$ is bounded above by $C\sup_{j\leq n_kT}(\varrho_{j+1}-\varrho_j)/a_{n_k}$. Let $\varepsilon>0$, using a union bound and that the distance between regenerations have exponential moments we have that
\begin{flalign*}
 \Pb\left(u_k >\varepsilon\right) \;\leq \; n_kT\Pt(\varrho_2-\varrho_1>C\varepsilon a_{n_k}) +\Pt(\varrho_1>C\varepsilon a_{n_k}) \; \leq \; C_Tn_ke^{-c_\varepsilon a_{n_k}}
\end{flalign*}
which converges to $0$ as $k\rightarrow \infty$.

The inverse $\Delta_{t}^{-1}=\sup\{X_s: \; s\leq t\}=:\overline{X}_t$ is the furthest vertex reached by time $t$. By \textit{inverse with linear centring} \cite[Theorem 13.7.1]{wh02} we have that
\[\left(\frac{\Delta_{n_kt\nu_\beta}-n_kt}{a_{n_k}}, \frac{\overline{X}_{n_kt}-n_kt\nu_\beta}{a_{n_k}}\right)_{t\geq 0}\]
 converge jointly in distribution on $D_{M_1}\times D_{M_1}$ to the pair $(\Vc_t,-\nu_{\beta}\Vc_{t})$. Moreover, letting $\tau_0$ denote the first hitting time of $0$, using a union bound we have
\begin{flalign*}
 \Pb\left(\sup_{t\leq nT}\overline{X}_t-X_t>C\log(n)\right) \;\leq\; nT\Pt_{\lfloor C\log(n)\rfloor}(\tau_0<\infty) \;=\; nT\beta^{-\lfloor C\log(n)\rfloor}
\end{flalign*}
which converges to $0$ for $C$ large therefore we have the desired convergence replacing $\overline{X}$ with $X$.

Finally, as in the proof of Theorem \ref{t:Sstab}, independent and stationary increments of $\Vc_t$ follow from independent and stationary increments of the discrete process. Combined with the form of the Laplace transform we have that $\Vc_t$ is a L\'{e}vy process. Similarly, when the convergence holds along all subsequences $n_k$, the form of the Laplace transform yields self-similarity and therefore the process is $\alpha$-stable.
\end{proof}

Noting that $\hat{\Vc}_t:=-\tilde{\Vc}_{R_{\nu_{\beta}t}}$ is the limiting process for $(X_{n_kt}-n_kt\nu_\beta)/a_{n_k}$ where $R_t=(\Eb[\eta_0]\Eb[\zeta^Y_2-\zeta^Y_1])^{-1}t$ and $\tilde{\Vc}_{t}^\alpha$ has Laplace exponent $t\sum_{l=1}^\infty f(l,\lambda \nu_\beta)\Et[|\Ic_l|]$, we can express the moment generating function of $\hat{\Vc}^\alpha_t$ as
\begin{flalign*}
\Eb[e^{\lambda\hat{\Vc}_t}]\;=\;\exp\left(\frac{t\nu_\beta}{\Eb[\eta_0]\Eb[\zeta^Y_2-\zeta^Y_1]}\sum_{l=1}^\infty f(l,\lambda\nu_\beta)\Et[|\Ic_l|]\right).
\end{flalign*}
Notice that the random variables $\eta_m-\Eb[\eta_0]$ are bounded below and therefore have a one-sided heavy tail. It follows that $\Delta_{nt}-nt\nu_{\beta}$ is a totally asymmetric jump process and therefore the process $\Vc_t$ can be decomposed into a positive jump process with negative drift. 

We now prove a technical result that will make the condition \eqref{LogAss} more straightforward to apply. We show that we can decompose the excursion times and remove parts of the excursion that do not contribute to the fluctuations. Let $\tilde{X}$ be a randomly trapped random walk with some trap measure $\tilde{\pi}$ and embedded walk $Y$ then denote by $\tilde{\eta}$ the sequence of holding times.
\begin{cly}\label{c:ign}
Let $\beta>1$, $\alpha\in(1,2)$ and $f:\Nb\times\Rb_+\rightarrow \Rb$ be such that $f(1,\lambda)\asymp\lambda^\alpha$. Suppose that the exists a coupling of $X$, $\tilde{X}$ such that, for some $\epsilon>0$, $\Eb[|\eta_0-\tilde{\eta}_0|^{\alpha+\epsilon}]<\infty$. If there exists $n_k\nearrow \infty$, $a_n=n^{1/\alpha}L(n)$ for some $L$ slowly varying such that for any $l \in\Nb$ and $\lambda>0$ we have
  \begin{flalign*}
  n_k\log\left(\Er\left[\Et^\omega\left[\exp\left(-\frac{\lambda}{a_{n_k}}(\tilde{\eta}_0-\Eb[\tilde{\eta}_0])\right)\right]^l\right]\right)\sim f(l)\lambda^\alpha
  \end{flalign*}
as $k\rightarrow \infty$ then, as $k\rightarrow \infty$, 
\[\left(\frac{\Delta_{n_kt\nu_\beta}-n_kt}{a_{n_k}}, \frac{X_{n_kt}-n_kt\nu_\beta}{a_{n_k}}\right)_{t\geq 0}\]
 converges in $\Pb$-distribution on $D_{M_1}\times D_{M_1}$ to $(\Vc_t, -\nu_{\beta}\Vc_t)_{t\geq 0}$ where $\Vc_t$ is a L\'{e}vy process.
In particular, if \eqref{LogAss} holds for all subsequences $n_k$ then $f(l,\lambda)=g(l)\lambda^\alpha$ for some $g\not\equiv0$ and $\Vc_t$ is an $\alpha$-stable process. 
\end{cly}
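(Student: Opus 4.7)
My plan is to reduce the corollary to Theorem~\ref{t:fluc} applied to $\tilde X$ and then transport the conclusion through the coupling. Set $\delta:=\Eb[\eta_0]-\Eb[\tilde\eta_0]$, which is finite because $\Eb[|\eta_0-\tilde\eta_0|^{\alpha+\epsilon}]<\infty$ implies $\Eb[|\eta_0-\tilde\eta_0|]<\infty$ by Jensen; in particular $\Eb[\tilde\eta_0]<\infty$, so $\tilde X$ is ballistic with speed $\tilde\nu_\beta=\upsilon_\beta/\Eb[\tilde\eta_0]$. The hypothesised Laplace asymptotic on $\tilde\eta_0$ is a special case of \eqref{LogAss} with $f(l,\lambda)=f(l)\lambda^\alpha$, so Theorem~\ref{t:fluc} applied to $\tilde X$ gives, along $(n_k)$, the joint convergence
\begin{flalign*}
\left(\frac{\tilde\Delta_{n_k s}-n_k s/\tilde\nu_\beta}{a_{n_k}},\,\frac{\tilde X_{n_k s}-n_k s\tilde\nu_\beta}{a_{n_k}}\right)_{s\geq 0}\;\Rightarrow\;\bigl(\tilde\Vc_{s/\tilde\nu_\beta},\,-\tilde\nu_\beta\tilde\Vc_{s/\tilde\nu_\beta}\bigr)_{s\geq 0}
\end{flalign*}
on $D_{M_1}\times D_{M_1}$ for an $\alpha$-stable L\'evy process $\tilde\Vc$.

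The heart of the argument is to show that the recentred hitting-time processes of $X$ and $\tilde X$ differ by $o(a_{n_k})$ uniformly on compact intervals. Writing $T_Y(m):=\inf\{j\geq 0:Y_j=m\}$, the coupling yields
\begin{flalign*}
\bigl(\Delta_m-\tfrac{m}{\nu_\beta}\bigr)-\bigl(\tilde\Delta_m-\tfrac{m}{\tilde\nu_\beta}\bigr)\;=\;\sum_{k=0}^{T_Y(m)-1}\bigl(\eta_k-\tilde\eta_k-\delta\bigr)\;+\;\delta\Bigl(T_Y(m)-\tfrac{m}{\upsilon_\beta}\Bigr).
\end{flalign*}
Decomposing the first sum along the regeneration times $\zeta^Y_j$ of $Y$ produces i.i.d.\ centred block increments $(W_j)_{j\geq 2}$; Minkowski's inequality applied conditional on the block length, together with $\Eb[|\eta_0-\tilde\eta_0|^{\alpha+\epsilon}]<\infty$ and the exponential tail of $\zeta^Y_2-\zeta^Y_1$ from \cite[Lemma~5.1]{depeze96}, yields $\Eb[|W_2|^{\alpha+\epsilon}]<\infty$. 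A Marcinkiewicz--Zygmund bound (or the CLT when $\alpha+\epsilon\geq 2$) then forces the maximum of the first sum over $m\leq n_kS$ to be $o(n_k^{1/\alpha})=o(a_{n_k})$ in probability, while the second term is $O(\sqrt{n_k})=o(a_{n_k})$ because $\alpha<2$ and $T_Y$ has i.i.d.\ increments with exponential moments. Boundary contributions from the first (possibly atypical) block and the incomplete block at level $m$ are absorbed via the same tail bound and a union bound, giving
\begin{flalign*}
\sup_{s\leq S}\;\frac{1}{a_{n_k}}\left|\bigl(\Delta_{n_k s}-\tfrac{n_k s}{\nu_\beta}\bigr)-\bigl(\tilde\Delta_{n_k s}-\tfrac{n_k s}{\tilde\nu_\beta}\bigr)\right|\;\xrightarrow{\Pb}\;0.
\end{flalign*}

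Because $M_1$-convergence is preserved under addition of a sequence converging uniformly to $0$, this promotes the convergence of $(\tilde\Delta_{n_k s}-n_k s/\tilde\nu_\beta)/a_{n_k}$ into the corresponding statement for $\Delta$. Substituting $s=t\nu_\beta$ gives $(\Delta_{n_k t\nu_\beta}-n_k t)/a_{n_k}\Rightarrow \Vc_t:=\tilde\Vc_{t\nu_\beta/\tilde\nu_\beta}$ on $D_{M_1}$, and $\Vc$ inherits $\alpha$-stability from $\tilde\Vc$ via its scaling identity. The joint convergence with $(X_{n_kt}-n_kt\nu_\beta)/a_{n_k}$ and the identification of its limit as $-\nu_\beta\Vc_t$ then follow verbatim from the ``inverse with linear centring'' \cite[Theorem~13.7.1]{wh02} argument and the maximal-displacement estimate $\Pb(\sup_{t\leq nT}\overline{X}_t-X_t>C\log n)\to 0$ used at the end of the proof of Theorem~\ref{t:fluc}.

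I expect the main obstacle to be the moment control on the block increments $W_j$: within a single regeneration block the summands $\eta_k-\tilde\eta_k-\delta$ are not independent, because revisits to a vertex share the same trap $\omega_x$, so no direct i.i.d.\ tail bound applies block-internally. Conditioning on the block length and using Minkowski to trade the within-block dependence for a multiplicative factor equal to that length, then absorbing the factor through the exponential tail of $\zeta^Y_2-\zeta^Y_1$, is the cleanest route; the $(\alpha+\epsilon)$-moment assumption gives exactly the slack needed to close this estimate, and this bookkeeping is essentially the only new ingredient beyond what is already developed in the proof of Theorem~\ref{t:fluc}.
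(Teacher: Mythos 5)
Your proposal is correct and shares the paper's overall skeleton: apply Theorem \ref{t:fluc} to $\tilde X$ (the Laplace hypothesis is \eqref{LogAss} for $\tilde\eta$), show that the coupled, centred hitting-time processes of $X$ and $\tilde X$ differ by $o(a_{n_k})$ uniformly on compacts, and then run the inverse-with-linear-centring argument exactly as at the end of the proof of Theorem \ref{t:fluc}. Where you differ is in how the key comparison estimate is proved. The paper first reduces to monotone couplings (replacing $\tilde\eta_k$ by $\tilde\eta_k\wedge\eta_k$ and using the triangle inequality) and then observes that, say when $\tilde\eta_k\ge\eta_k$, the difference $\tilde\Delta_m-\Delta_m$ is itself the hitting time of an auxiliary randomly trapped random walk $\hat X$ with holding times $\tilde\eta_k-\eta_k$ having finite $(\alpha+\epsilon)$-moments, so that $\sup_{m\le n}|\hat\Delta_m-m\hat\nu_\beta^{-1}|=o(a_n)$ follows by re-running the regeneration machinery of Theorem \ref{t:fluc} for $\hat X$; this buys brevity, since everything is quoted from the existing proof and no new moment bookkeeping is needed. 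You instead keep the signed differences, write $(\Delta_m-m/\nu_\beta)-(\tilde\Delta_m-m/\tilde\nu_\beta)$ as a regeneration-block sum of centred increments $W_j$ plus $\delta(T_Y(m)-m/\upsilon_\beta)$, and close the estimate with conditional Minkowski, the exponential tails of $\zeta^Y_2-\zeta^Y_1$, and Marcinkiewicz--Zygmund/CLT plus union bounds for the boundary blocks; this is more self-contained (no monotonisation, no appeal to an auxiliary walk) at the price of the moment bookkeeping you flag. Two small points to tidy: (i) as in the paper's opening line, you should note explicitly that the hypothesis $\Eb[|\eta_0-\tilde\eta_0|^{\alpha+\epsilon}]<\infty$ extends, by the i.i.d.\ trap structure and translation invariance, to a coupling with $\Eb[|\eta_k-\tilde\eta_k|^{\alpha+\epsilon}]<\infty$ for every $k$, which is what your per-step and Wald/centring computations actually use; (ii) the assertion ``$o(n_k^{1/\alpha})=o(a_{n_k})$'' is not literally valid when $L(n)\to 0$, but your bounds really give $o(n_k^{1/(\alpha+\epsilon)})$ and $O_{\Pb}(\sqrt{n_k})$, and each of these is $o(a_{n_k})$ because the strict polynomial gap beats the slowly varying factor, so the conclusion stands once stated this way.
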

\begin{proof}
Since, in each model, the traps are i.i.d.\ under $\Pr$ and each holding time in a fixed trap is independent, by translation invariance there exists a version $\tilde{X}$ coupled to $X$ such that $\Eb[(\eta_k-\tilde{\eta}_k)^{\alpha+\epsilon}]<\infty$ for all $k \in \Nb$. Write $\tilde{\Delta}_m:=\inf\{t>0:\tilde{X}_t=m\}$ to be the first hitting time of level $m$ by the walk $\tilde{X}$ and $\tilde{\nu}_\beta$ its speed. We want to show that
\begin{equation}\label{e:Diff}
\sup_{m\leq n}\left|\frac{\tilde{\Delta}_m-\Delta_m-m(\tilde{\nu}_\beta^{-1}-\nu_\beta^{-1})}{a_n}\right|
\end{equation}
converges to $0$ in probability as $\nin$.

First suppose that $\tilde{\eta}_k\geq \eta_k$ $\Pb$-a.s.. In particular, we assume that $\Eb[\tilde{\eta}_0-\eta_0]>0$ since, otherwise, the result follows trivially. Define $\hat{X}$ to be the randomly trapped random walk with embedded walk $Y$ and holding times $\tilde{\eta}_k-\eta_k$. This walk has first hitting times $\hat{\Delta}_m=\tilde{\Delta}_m-\Delta_m$ and, since $\Eb[\hat{\eta}_0^{\alpha+\epsilon}]<\infty$, we have that the walk is ballistic and has speed $\hat{\nu}_\beta=\upsilon_\infty/\Eb[\tilde{\eta}_0-\eta_0]$ (see \eqref{e:spd}) which gives us that $\hat{\nu}_\beta^{-1}=\tilde{\nu}_\beta^{-1}-\nu_\beta^{-1}$. In particular \eqref{e:Diff} is equal to 
\begin{equation*}
\sup_{m\leq n}\left|\frac{\hat{\Delta}_m-m\hat{\nu}_\beta^{-1}}{a_n}\right|.
\end{equation*}
It then follows straightforwardly from the proof of Theorem \ref{t:fluc} that this converges to $0$ in probability as $\nin$. 

Note that by a symmetric argument we have that if $\tilde{\eta}_k\leq \eta_k$ $\Pb$-a.s.\ then \eqref{e:Diff} converges to $0$ in probability as $\nin$. For general $\tilde{X}$ we can define a new randomly trapped random walk with holding times $\tilde{\eta}_k\land \eta_k$. Using the triangle inequality we then have that \eqref{e:Diff} converges to $0$ in probability as $\nin$. The result then follows by Theorem \ref{t:fluc}.
\end{proof}

\section{Counterexample for convergence in the Skorohod topology}\label{s:non}
Recall that in Theorem \ref{t:fluc} we consider the $M_1$ topology as opposed to the stronger $J_1$ topology considered for $\Sigma$ in Corollary \ref{c:regseq}. Given that our approach was to compare $\Delta$ with $\Sigma$, one may question whether we should be able to extend the statements of Theorem \ref{t:fluc} to the $J_1$ topology. This is not possible in the general setting because, between regenerations, the walk experiences several large holding times at the same significant vertex. This means that $\Delta$ fluctuates between regeneration points whereas $\Sigma$ groups the fluctuations into a single jump. 
   
We now use the example of the random walk in random scenery to show that, in general, the convergence in Theorem \ref{t:fluc} cannot be extended to $J_1$ convergence. Let $(\kappa_x)_{x\in \Zb}$ be an i.i.d.\ sequence of $(0,\infty)$-valued random variables under $\Pr$. We then define the holding times by $\Pt^\omega(\eta_{x,j}=\kappa_x)=1$ for all $j\geq 1$ and $x \in \Zb$. Let $X$ denote the randomly trapped random walk, $S$ its clock process and $\Delta$ the first hitting times. 

Proposition \ref{p:RWRS} shows that the assumptions of Theorems \ref{t:Sstab} and \ref{t:fluc} hold under a simple stable law condition for the environment. 
\begin{prp}\label{p:RWRS}
Let $\alpha >0$, $\beta>1$ and suppose that $\Pr(\kappa_0\geq t) \sim t^{-\alpha}\overline{L}(t)$ as $t\rightarrow \infty$ for a function $\overline{L}$ which varies slowly at $\infty$. Then, there exists $a_n=n^{1/\alpha}L(n)$ for a slowly varying function $L$ such that
\begin{enumerate}
 \item If $\alpha \in (0,1)$ then as $n\rightarrow \infty$, $(S_{nt}/a_{n},X_{a_{n}t}/n)_{t\geq 0}$ converges in $\Pb$-distribution on $D_{M_1}\times D_{U}$ to $(\Sc_t,\upsilon_\beta\Sc^{-1}_t)_{t\geq 0}$ where $\Sc_t$ is an $\alpha$-stable subordinator. 
 \item If $\alpha \in (1,2)$ write $\nu_\beta=\upsilon_\beta/\Er[\kappa_0]$ then, as $n\rightarrow \infty$, 
\[\left(\frac{\Delta_{nt\nu_\beta}-nt}{a_{n}}, \frac{X_{nt}-nt\nu_\beta}{a_{n}}\right)_{t\geq 0}\]
 converges in $\Pb$-distribution on $D_{M_1}\times D_{M_1}$ to $(\Vc_t, -\nu_{\beta}\Vc_t)_{t\geq 0}$ where $\Vc_t$ is an $\alpha$-stable process. 
\end{enumerate}
\begin{proof}
The holding time $\eta_0$ is almost surely fixed and equal to $\kappa_0$ under $\Pt^\omega$; we therefore have that $\Et^\omega[g(\eta_0)]=g(\kappa_0)$ for any function $g$. 

For $\alpha \in (0,1)$, by assumption, $\kappa_0$ belongs to the domain of attraction of stable law of index $\alpha$ with respect to $\Pr$ hence there exists $a_n=n^{1/\alpha}L(n)$ such that $n\Pr(\kappa_0>ta_n)\sim Ct^{-\alpha}$. Since $\kappa_0$ is almost surely non-negative, it follows from a Tauberian theorem (e.g.\ \cite[Theorem XIII.5.1]{fe71}) that
\begin{flalign*}
 -n\log\left(\Er\left[\Et^\omega\left[\exp\left(-\frac{\lambda}{a_n}\eta_0\right)\right]^l\right]\right) \;=\; -n\log\left(\Er\left[\exp\left(-\frac{l\lambda}{a_n}\kappa_0\right)\right]\right) \; \sim \; C(l\lambda)^\alpha.
\end{flalign*}
The first result then follows from Theorem \ref{t:Sstab}.

Similarly, for $\alpha \in (1,2)$, by assumption, $\kappa_0$ belongs to the domain of attraction of stable law of index $\alpha$ with respect to $\Pr$ hence there exists $a_n=n^{1/\alpha}L(n)$ such that $n\Pr(\kappa_0-\Er[\kappa_0]>ta_n)\sim Ct^{-\alpha}$ and therefore, since $\kappa_0-\Er[\kappa_0]$ is centred and bounded below,
\begin{flalign*}
n\log\left(\!\Er\left[\Et^\omega\!\left[\exp\left(-\frac{\lambda}{a_n}(\eta_0-\Eb[\eta_0])\right)\right]^l\right]\right) = n\log\left(\!\Er\left[\exp\left(-\frac{l\lambda}{a_n}(\kappa_0-\Er[\kappa_0])\right)\right]\right) 
\end{flalign*}
converges to $C(l\lambda)^\alpha$ for some constant $C$. The result then follows from Theorem \ref{t:fluc}.
\end{proof}
\end{prp}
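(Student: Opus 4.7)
The strategy is to reduce the hypotheses of Theorems \ref{t:Sstab} and \ref{t:fluc} to one-dimensional Tauberian statements for $\kappa_0$, taking advantage of the degenerate trap distribution in the random walk in random scenery. Since $\Pt^\omega(\eta_{x,j}=\kappa_x)=1$, the inner quenched expectation is deterministic, giving
\[\Et^\omega\!\left[\exp\!\left(-\tfrac{\lambda}{a_n}\eta_0\right)\right]^l \;=\; \exp\!\left(-\tfrac{l\lambda}{a_n}\kappa_0\right),\]
and analogously for the centred variable $\eta_0-\Eb[\eta_0]$. Consequently the left-hand sides of \eqref{StabCon} and \eqref{LogAss} reduce to Laplace transforms of a single scalar variable under $\Pr$, with the exponent $l$ absorbed into the Laplace parameter via the substitution $\mu = l\lambda$.

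For part 1, with $\alpha\in(0,1)$, the regularly varying tail assumption on $\kappa_0$ together with standard stable-scaling let me choose $a_n=n^{1/\alpha}L(n)$ so that $n\Pr(\kappa_0>ta_n)\to c t^{-\alpha}$. A Tauberian theorem for Laplace transforms of non-negative variables (Feller XIII.5.1) then delivers
\[-n\log\Er\!\left[\exp\!\left(-\tfrac{\mu}{a_n}\kappa_0\right)\right] \to c'\mu^\alpha,\]
so condition \eqref{StabCon} holds along the full sequence with $f(l,\lambda)=c'(l\lambda)^\alpha$. Theorem \ref{t:Sstab} then immediately delivers the $\alpha$-stable subordinator limit and the corresponding statement for the position $X$.

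For part 2, with $\alpha\in(1,2)$, the same construction produces $a_n$ with $n\Pr(\kappa_0-\Er[\kappa_0]>ta_n)\to ct^{-\alpha}$. The centred variable $\kappa_0-\Er[\kappa_0]$ is bounded below by $-\Er[\kappa_0]$ but is no longer non-negative, so I would extract the required asymptotic by splitting off the drift,
\[\log\Er\!\left[\exp\!\left(-\tfrac{\mu}{a_n}(\kappa_0-\Er[\kappa_0])\right)\right] \;=\; \tfrac{\mu\Er[\kappa_0]}{a_n}+\log\Er\!\left[\exp\!\left(-\tfrac{\mu}{a_n}\kappa_0\right)\right],\]
applying the Tauberian theorem to the second logarithm, and observing that the linear correction $\mu\Er[\kappa_0]/a_n$ exactly cancels the first-order term in its expansion (which is finite because $\Er[\kappa_0]<\infty$). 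What survives is $n\log\Er[\exp(-\mu(\kappa_0-\Er[\kappa_0])/a_n)]\to c'\mu^\alpha$, and setting $\mu=l\lambda$ verifies \eqref{LogAss} with $f(l,\lambda)=c'(l\lambda)^\alpha$; Theorem \ref{t:fluc} then gives the $\alpha$-stable limit for $\Delta$ and $X$.

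The only genuine obstacle is the centring step in part 2: standard Tauberian statements are phrased for non-negative variables, so the bookkeeping around the drift term $\mu\Er[\kappa_0]/a_n$ has to be handled carefully to ensure it cancels at leading order and does not leak a spurious linear term into the limit $f$. Because $\alpha>1$ guarantees both $\Er[\kappa_0]<\infty$ and $n/a_n\to 0$, this cancellation is clean, but it is the step most liable to go astray if written hastily.
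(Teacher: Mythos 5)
Your proposal follows essentially the same route as the paper: the degenerate trap law makes the quenched Laplace transform deterministic, so \eqref{StabCon} and \eqref{LogAss} reduce to scalar Tauberian asymptotics for $\kappa_0$ (respectively $\kappa_0-\Er[\kappa_0]$) with $f(l,\lambda)=c(l\lambda)^\alpha$, and Theorems \ref{t:Sstab} and \ref{t:fluc} then give both parts. One small correction to your closing remark: for $\alpha\in(1,2)$ one has $n/a_n\rightarrow\infty$, not $0$; the drift cancellation is clean because $n/a_n^2\rightarrow 0$, which is what kills the quadratic cross-terms in the expansion of the logarithm, but this does not affect the validity of your argument.
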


We now give an argument which shows that, in general, the $M_1$ convergence in Theorem \ref{t:fluc} cannot be extended to $J_1$ convergence. For $f \in D([0,\infty),\Rb)$ and $T,h>0$ let
\[\tilde{\omega}(f,T,h):=\inf_{(I_k)}\max_k\sup_{r,s \in I_k}|f(s)-f(r)|\]
denote the modulus of continuity where the infimum is over partitions of $[0,T)$ into $I_k=[v_k,v_{k+1})$ such that $v_{k+1}-v_k>h$ for all $k\geq 1$. By \cite[Theorem 16.10]{ka02}, if $Z,(Z^n)_{n=1}^\infty$ are random elements of $D([0,\infty),\Rb)$ then $Z^n$ converges in distribution to $Z$ in $D_{J_1}$ if and only if $Z^n$ converges to $Z$ in the sense of finite dimensional distributions and, for any $T>0$,
\[\lim_{h\rightarrow 0}\limsup_{\nin}\Eb[\tilde{\omega}(Z^n,T,h)\land 1]=0.\]

Let us assume that $\beta>1$ and $\Pr(\kappa_0\geq t)  = t^{-\alpha}\land 1$ for $\alpha \in (1,2)$. Write 
\[Z^n(t):=\frac{X_t-t\nu_\beta}{a_n} \quad \text{ then } \quad |Z^n(s)-Z^n(r)|=\left|\frac{X_s-X_r-\nu_\beta(s-r)}{a_n}\right|.\]

Since $\alpha>1$ we have that the walk is ballistic therefore 
\[\limn\Pb\left(\Delta_{nT\nu_\beta/2}\leq nT\right)=1.\]
That is, with high probability the walk visits at least $nT\nu_\beta/2$ vertices by time $nT$. 

Let $\lambda>0$ then the probability that at least one of these first $nT\nu_\beta/2$ vertices has a holding time of at least $\lambda_n:=\lambda a_n=\lambda n^{1/\alpha}$ and no more than $h_n:=hn/3$ is at least
\begin{flalign*}
 \Pb\left(\bigcup_{x=1}^{nT\nu_\beta/2}\!\!\!\{\kappa_x\in [\lambda_n,h_n)\}\!\right)  = 1-\left(1-(\lambda_n)^{-\alpha}+(h_n)^{-\alpha}\right)^{nT\nu_\beta/2}  \rightarrow 1-e^{-\frac{\lambda^{-\alpha}T\nu_\beta}{2}}>0. 
\end{flalign*}

Let $\hat{x}:=\inf\{x\geq 0:\kappa_x \in [\lambda_n,h_n)\}$. Notice that the probability that the embedded walk moves from a vertex $y$ to $y-1$ and then back to $y$ starting from the first hitting time of $y$ is 
\[\Pb(Y_{\Delta_y^Y+1}=y-1, \; Y_{\Delta_y^Y+2}=y)=\frac{\beta}{(\beta+1)^2}>0\]
independently of the environment. In particular, this holds for $y=\hat{x}$ and, writing 
\begin{flalign*}
\Ac_n  & :=\{\Delta_{nT\nu_\beta/2}\leq nT\}\;\cap\;\{\hat{x}\leq nT\nu_\beta/2\} \;\cap\;\{Y_{\Delta_{\hat{x}}^Y+1}=\hat{x}-1\}\;\cap\;\{Y_{\Delta_{\hat{x}}^Y+2}=\hat{x}\},
\end{flalign*}
we have that for any fixed $h>0$ 
\[\limsup_{\nin}\Pb(\Ac_n)\;\geq\; \frac{\beta}{(\beta+1)^2}\left(1-e^{-\frac{\lambda^{-\alpha}T\nu_\beta}{2}}\right)\;>\;0.\] 

Let $\vartheta_n:= \Delta_{\hat{x}}$ be the first hitting time of $\hat{x}$ then $\vartheta_n^+:= \vartheta_n+\kappa_{\hat{x}}$ is the time that the walk first moves from $\hat{x}$ to one of its neighbours. Similarly, write $\tilde{\vartheta}_n:= \inf\{t>\vartheta_n^+:X_t=\hat{x}\}$ to be the first return time to $\hat{x}$ after the first visit and let $\tilde{\vartheta}_n^+:= \tilde{\vartheta}_n+\kappa_{\hat{x}}$ be the time at which the walk moves away from $\hat{x}$ for the second time. 

Let $(I_k)$ be a partition of $[0,T)$ into intervals $[v_k,v_{k+1})$ satisfying $v_{k+1}-v_k>h$ for all $k$. On the event $\Ac_n$ we have that for any such partition there exists some $k$ such that either $\vartheta_n,\vartheta_n^+ \in I_k$ or $\tilde{\vartheta}_n,\tilde{\vartheta}_n^+ \in I_k$. We then have that 
\[\limsup_{\nin}|Z^n(\vartheta_n)-Z^n(\vartheta_n^+)|\;=\;\limsup_{\nin}\frac{\nu_\beta\kappa_{\hat{x}}+1}{a_n}\geq \lambda \nu_\beta.\]
The same holds for $\tilde{\vartheta}_n,\tilde{\vartheta}_n^+$. In particular, choosing $\lambda \leq \nu_\beta^{-1}$,
\[\limsup_{\nin}\Eb[\tilde{\omega}(Z^n,T,h)\land 1]\;\geq\; \lambda \nu_\beta\limsup_{\nin}\Pb(\Ac_n)\; \geq\; \frac{\lambda \nu_\beta\beta\left(1-e^{-\frac{\lambda^{-\alpha}T\nu_\beta}{2}}\right)}{(\beta+1)^2}\;>\;0.  \]
This proves that 
\[\lim_{h\rightarrow 0}\limsup_{\nin}\Eb[\tilde{\omega}(Z^n,T,h)\land 1]\neq0.\]
for the random walk in random scenery and, therefore, Theorem \ref{t:fluc} cannot be extended to the $J_1$ topology. 

\section{Random walks on Galton-Watson trees}\label{s:GWTree}
For a fixed tree $\Ts$ rooted at $\rho$, we write $\overleftarrow{x}$ to denote the parent of $x \in \Ts$ and $c(x)$ the set of children of $x$. For $\beta\geq 1$, we then define the $\beta$ biased random walk $X$ as the Markov chain started from a fixed vertex $z$ with transition probabilities 
\begin{flalign*}
\Pt^{\Ts}_z(X_{n+1}=y|X_n=x)=\begin{cases} \frac{1}{1+\beta |c(x)|}, & \text{if } y=\overleftarrow{x}, \\  \frac{\beta}{1+\beta |c(x)|}, & \text{if } y \in c(x), \; x \neq \rho, \\ \frac{1}{
|c(\rho)|}, & \text{if } y \in c(x), \; x =\rho, \\ 0, & \text{otherwise.} \\ \end{cases} 
\end{flalign*}
That is, $X$ is the random walk which is $\beta$ times more likely to move along a given edge away from the root than it is to move along the unique edge directed towards the root.

Let $\xi$ be a random variable taking values in the non-negative integers with masses $p_k:=\Pr(\xi=k)$, mean $\mu\in(0,1)$ and variance $\sigma^2 <\infty$. Denote by $Z_n$ the $n\th$ generation size of a GW-process with this offspring law (e.g.\ \cite{atne04}). Such a process gives rise to a random rooted tree $\Tc^\xi$ where individuals in the process are represented by vertices (with the unique progenitor as the root $\rho$) and undirected edges connect individuals with their offspring. To avoid the trivial case in which no traps form we also assume that $p_0+p_1<1$. 

It has been shown in \cite{ke86} that there is a well defined probability measure $\Pr$ over GW-trees conditioned to survive $\Tc$ which arises as the limit as $\nin$ of probability measures over GW-trees conditioned to survive up to generation $n$. It can be seen (e.g.\ \cite{ja12}) that the tree can be constructed by attaching i.i.d.\ finite trees to a single semi-infinite path. More specifically, we can construct a random tree with the desired law using two types of vertices which we refer to as special and normal. Define $\xi^*$ to be a random variable with the size-biased law given by the probabilities $\Pr(\xi^*=k)=kp_k\mu^{-1}$. The construction is then as follows:
\begin{enumerate}[1)]
\item
Start with a single special vertex;
\item 
At each generation, let every normal vertex give birth to vertices according to independent copies of the original offspring distribution (all of which are labelled as normal);
 \item 
At each generation, let every special vertex give birth to vertices according to independent copies of the size-biased distribution (one of which is chosen uniformly at random to be special and the remaining vertices are labelled as normal).
\end{enumerate} 
We will denote by $\rho_k$ the special vertex in the $k\th$ generation and $\Yc:=(\rho_0=\rho,\rho_1,...)$ the semi-infinite path formed by the special vertices (which we call the backbone). We then write $\Tc^{*-}_x$ for the branch rooted at $x \in \Yc$; that is, the tree formed by the descendants of $x$ which are not in the descendant tree of the unique child of $x$ on $\Yc$ (see Figure \ref{f:treediag}). 

\begin{figure}[t]
\centering
 \includegraphics[scale=0.8]{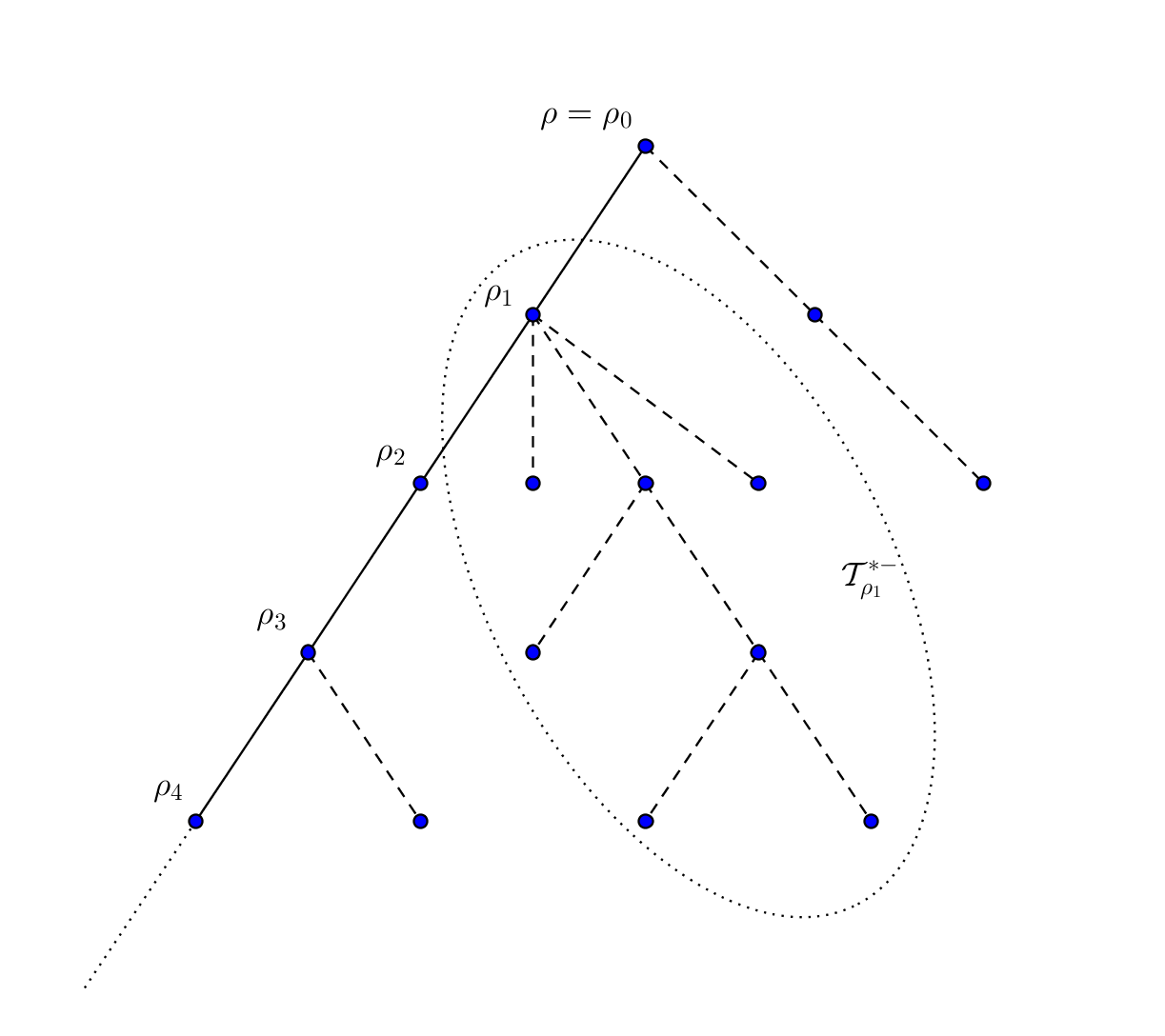} 
\caption{A sample subcritical GW-tree conditioned to survive $\Tc$ with the backbone $\Yc$ represented by solid lines and the buds and traps connected by dashed lines.}\label{f:treediag}
\end{figure}
In this section we consider a biased random walk $X$ on $\Tc$ and we denote by $|x|:=d(x,\rho)$ the graph distance between the $x\in\Tc$ and the root of the tree. As for the randomly trapped random walk, we use $\Pb(\cdot)=\int \Pt^{\Tc}_\rho(\cdot)\Pr(\text{d}\Tc)$ for the annealed law.


It has been shown in \cite[Theorem 4]{bo18_1} that if $1<\beta<\mu^{-1}$ then $|X_n|/n$ converges $\Pb$-a.s.\ to a fixed (known) speed $\nu_\beta$. Moreover, this has been extended to a functional central limit theorem in \cite[Theorem 4]{bo18_1} when $1<\beta<\mu^{-1/2}$ and $\Er[\xi^3]<\infty$. That is, under these conditions, $(|X_{nt}|-nt\nu_\beta)/\sqrt{n}$ converges in $\Pb$-distribution to a scaled Brownian motion. In this section we consider the case $\mu^{-1/2}<\beta<\mu^{-1}$ in which the walk is ballistic and does not satisfy a central limit theorem. We show in Theorem \ref{t:GWT} that, along given subsequential limits, the suitably centred and scaled process converges to a L\'{e}vy process. In particular, the correct scaling of the centred walk up to time $n$ is $n^{1/\gamma}$ where
\begin{flalign*}
\gamma:=-\frac{\log(\mu)}{\log(\beta)}\in(1,2).
\end{flalign*} 
\begin{thm}\label{t:GWT}
Suppose $\Er[\xi^{2+\gamma}]<\infty$ and $\mu^{-1/2}<\beta<\mu^{-1}$ then, for $\varsigma>0$ write $n_k(\varsigma)=\lfloor \varsigma\mu^{-k} \rfloor$. As $k\rightarrow \infty$, 
\[\left(\frac{\Delta_{n_kt\nu_\beta}-n_kt}{n_k^{1/\gamma}}, \frac{X_{n_kt}-n_kt\nu_\beta}{n_k^{1/\gamma}}\right)_{t\geq 0}\]
 converges in $\Pb$-distribution on $D_{M_1}\times D_{M_1}$ to $(\Vc_t, -\nu_{\beta}\Vc_t)_{t\geq 0}$ where $\Vc_t$ is a L\'{e}vy process.
\end{thm}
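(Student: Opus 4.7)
My plan is to realise the walk $X$ on $\Tc$ as (essentially) a biased randomly trapped random walk on $\Zb$ by projecting onto the backbone $\Yc$, so that Theorem~\ref{t:fluc} combined with Corollary~\ref{c:ign} applies. Identifying $\Yc$ with $\Zb_{\geq 0}$ via $\rho_k\leftrightarrow k$, each excursion of $X$ away from $\Yc$ is classified as either a visit into a bud at $\rho_k$ or a visit into the trap $\Tc^{*-}_{\rho_k}$. Let $\eta_k$ denote the total time spent during excursions into $\Tc^{*-}_{\rho_k}$ while the embedded backbone walk sits at $\rho_k$. Because the branches $\Tc^{*-}_{\rho_k}$ are i.i.d.\ under $\Pr$, the sequence $(\eta_k)_{k\geq 0}$ plays the role of the i.i.d.\ heavy-tailed holding times in the RTRW, while the time spent in buds and in backbone transitions forms a well-behaved remainder.

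The key tail estimate is that $\Pb(\eta_0>t)\asymp t^{-\gamma}$. A trap of height $n$ contributes an exit time of order $\beta^n$ (biased walk in a finite tree), while in a subcritical GW-tree $\Pr(\mathrm{height}(\Tc^{*-}_{\rho})\geq n)\asymp \mu^n$; combining these gives the required polynomial tail with exponent $\gamma=-\log\mu/\log\beta\in(1,2)$. The moment hypothesis $\Er[\xi^{2+\gamma}]<\infty$ is used to control the number and total size of the buds and branches: the time spent outside the ``deep'' parts of traps has $(\gamma+\varepsilon)$-moments, which is precisely what is needed to couple the actual GW-walk with an idealised RTRW on $\Zb$ via Corollary~\ref{c:ign}.

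The main work is to verify the Laplace transform assumption \eqref{LogAss}. I would decompose an excursion into $\Tc^{*-}_{\rho}$ by conditioning on the depth $n$ of the deepest vertex visited and computing the contribution of each depth. Because trap depths live on $\Nb$ and expected escape times scale geometrically in $n$, the scaled Laplace transform $n\log \Er[\Et^\omega[\exp(-\lambda a_n^{-1}(\eta_0-\Eb[\eta_0]))]^l]$ is \emph{log-periodic} in $n$ with period $-\log\mu$. Taking $n_k(\varsigma)=\lfloor\varsigma\mu^{-k}\rfloor$ aligns the arguments modulo this period, so a limit $f(l,\lambda)$ exists along this subsequence and has the required form $g(l)\lambda^\gamma$. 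Combined with the tail bound on $\eta_0$ and a standard analysis of the local time $\Lc(x,\infty)$ of the embedded walk, one verifies the hypotheses of Theorem~\ref{t:fluc}, and then invokes Corollary~\ref{c:ign} to pass back from the idealised RTRW to the genuine tree walk.

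The principal obstacle is the log-periodic structure, which is genuine rather than technical and forces the subsequential formulation: one must show that along $n_k(\varsigma)$ the convergence in \eqref{LogAss} holds simultaneously for every $l\geq 1$, uniformly enough that the exchange of summation and limit in Proposition~\ref{p:DoA} can be carried out. This requires controlling joint moments of excursion times at a single backbone vertex visited multiple times; these are not strictly independent because all excursions use the same realisation of $\Tc^{*-}_{\rho_k}$, so the power $l$ appearing in \eqref{LogAss} translates into an $l$-fold repeated descent into the same random finite tree. Once these conditional Laplace estimates are in place, the conclusion follows by combining Theorem~\ref{t:fluc} with Corollary~\ref{c:ign}.
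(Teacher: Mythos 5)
Your skeleton (project the tree walk onto the backbone to get an RTRW, discard non-contributing pieces of the excursions via Corollary \ref{c:ign}, verify \eqref{LogAss} along the lattice subsequence $n_k(\varsigma)$, then invoke Theorem \ref{t:fluc}) is the same as the paper's, but the step you defer to the end --- the ``conditional Laplace estimates'' for an $l$-fold repeated descent into the same random finite tree --- is precisely where the real work lies, and conditioning on the depth of the deepest vertex visited does not by itself resolve it. The problem is that both the durations of the excursions from the deepest point and the geometric number of such excursions depend on the height $\Hc$ of the trap, so $\Er[\Et^\oT[\cdot]^l]$ is intractable as stated. The paper's resolution is a specific decoupling, and your proposal contains no substitute for it: first strip off the journeys to and from the deepest point and the excursions that never reach it (these have finite $\alpha$-moments for some $\alpha>\gamma$, Lemma \ref{l:outback}); then replace each excursion from the deepest point of the finite trap by a coupled excursion in the Geiger--Kersting infinite extension $\Tc_k^*$, whose law no longer depends on $\Hc$ and which has finite second moments (Lemmas \ref{l:finVar} and \ref{l:RemCond}); finally replace the number of successful descents by a binomial whose success probability $1-\beta^{-1}$ is independent of $\Hc$ (Lemma \ref{l:Bin}). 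Only after these reductions does the height enter solely through the geometric variable $\tilde{G}_{j,k}$, so that the quenched Laplace transform can be computed in closed form (Lemma \ref{l:red}) and \eqref{LogAss} reduces, by dominated convergence (Lemma \ref{l:UpBd}, which is where $\Er[\xi^{2+\gamma}]<\infty$, i.e.\ finite $(1+\gamma)$-moments of $\xi^*$, is used), to convergence of $n\Pr(\psi_n^{(1)}>t)$ along $n_k(\varsigma)$, which is exactly where the lattice structure of $\Hc$ enters (Lemma \ref{l:SubseqCon}). Without such a decoupling you have no mechanism to verify \eqref{LogAss} for $l\geq 2$.

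Separately, your claim that the limit along $n_k(\varsigma)$ has the form $f(l,\lambda)=g(l)\lambda^\gamma$ is incorrect, and it is inconsistent with the log-periodic obstacle you yourself identify: a pure power limit would make $\Vc$ a $\gamma$-stable process, whereas the lattice distribution of the trap heights produces limiting tails of the form $t^{-\gamma}$ modulated by a log-periodic factor depending on $\varsigma$, which is why Theorem \ref{t:GWT} asserts only a L\'{e}vy (not stable) limit and why Theorem \ref{t:fluc} merely requires $f(1,\lambda)\asymp\lambda^\gamma$. This slip does not break the strategy --- the weaker $\asymp$ bound is all that is needed --- but as written your verification targets a statement that is false in this model.
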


It is natural to consider whether the convergence can be proved more generally (i.e.\ along all subsequences) however, this is not the case.
To see this we first note that the height $\Hc:=\sup\{n\geq 0:Z_n>0\}$ of the subcritical tree is approximately geometrically distributed with parameter $\mu$; specifically, by \cite[Theorem B]{lypepe95} the sequence $\Pr(\Hc\geq n)\mu^{-n}$ is decreasing and converges to a positive constant:
\begin{flalign}\label{l:cmu}
c_\mu:=\limn \frac{\Pr(\Hc\geq n)}{\mu^{n}}.
\end{flalign}
The time spent in a branch largely consists of a geometric number of excursions from the deepest point to itself. Each of these excursions is short but, by the Gambler's ruin, the probability that the walk escapes the branch on a given excursion is of the order $\beta^{-\Hc}$. It follows that the trapping times behave approximately like $\beta^{\Hc}$. These times do not belong to the domain of attraction of any stable law and therefore we do not observe full convergence. We direct the reader to \cite{arfrgaha12} for a more detailed illustration of this lattice effect. A related model has been considered in \cite{arha12} and \cite{ha13} in which the bias is chosen randomly for each edge according to a non-lattice condition. This modification removes the lattice effect in the sub-ballistic regime and should also be applicable here.

\begin{figure}[b!]
\centering
 \includegraphics[scale=0.4]{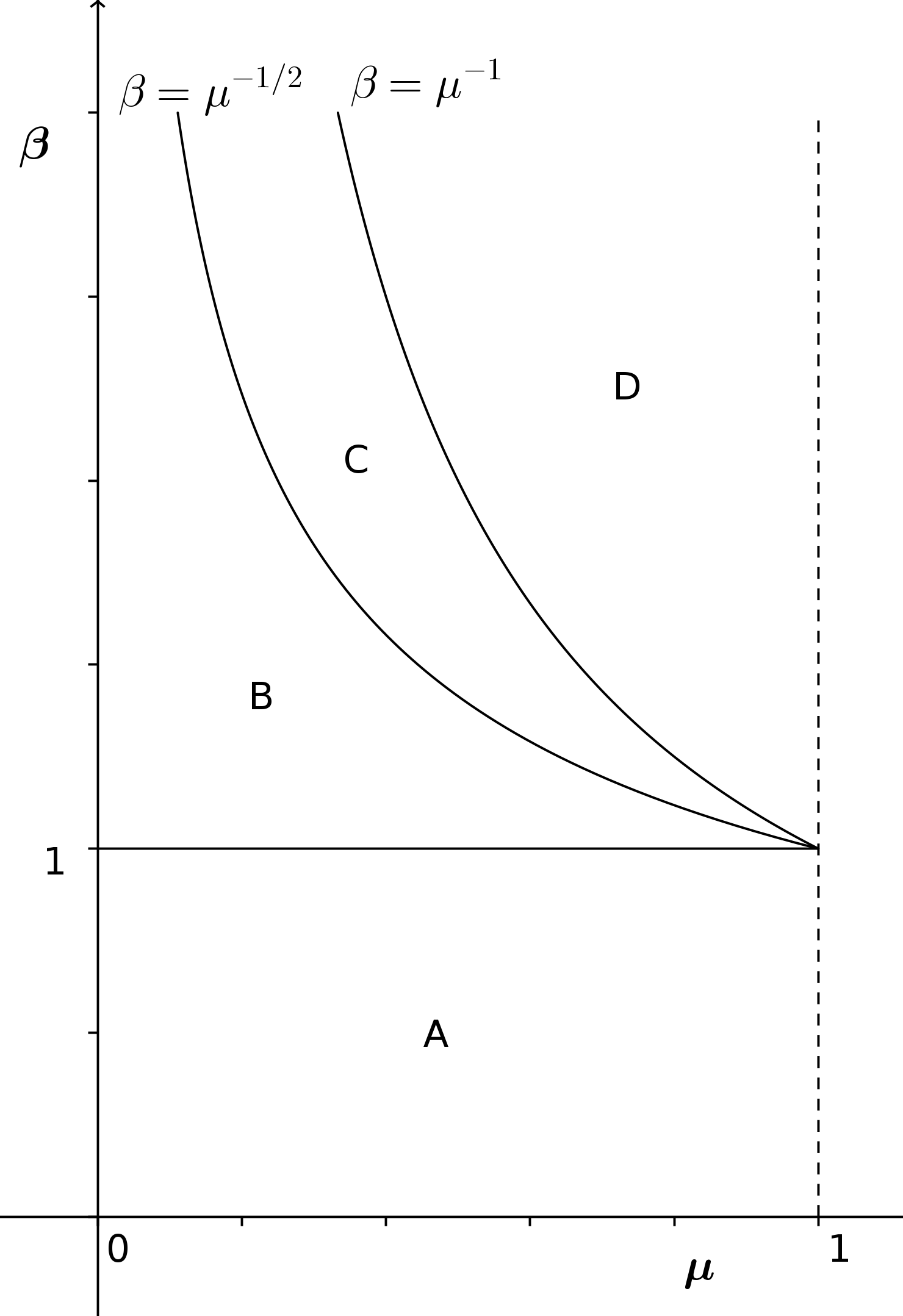} 
\caption{Phase diagram depicting the escape regimes of the biased random walk on the subcritical Galton-Watson tree for different combinations of mean and bias. In region A the walk is recurrent. In region B the walk is ballistic and satisfies a functional central limit theorem. In region C the walk is ballistic and has fluctuations of order $n^{1/\gamma}$. In region D the walk is subballistic and the distance reached by the walker by time $n$ is of order $n^\gamma$ (meaning level $n$ is first reached around time $n^{1/\gamma}$).}\label{f:phase}
\end{figure}

The case in which $\beta>\mu^{-1}$ has been studied further in \cite{bo18} where it is shown that $|X_n|$ scales with $n^\gamma$ but also experiences the lattice effect. This corresponds with the regime covered by Theorem \ref{t:Sstab} which we do not study further here. Furthermore, similar results to those mentioned above have been shown for the random walk on critical and supercritical GW-trees (cf.\ \cite{ai14, arfrgaha12, bo18_2, crfrku13, lypepe96, peze08}). We also refer the reader to  \cite{lype16}) for further background on random walks on GW-trees and their connection to electrical network theory. 


Although Theorem \ref{t:GWT} does not extend to random walks on supercritical GW-trees, many of the techniques used in the proof of Theorems \ref{t:fluc} and \ref{t:GWT} can be adapted to the supercritical setting. In particular, a corresponding result should hold with $\beta_c^{1/2}<\beta<\beta_c$ and $\gamma=\log(\beta_c)/\log(\beta)$ where $\beta_c$ is the critical value of the bias determined in \cite{lypepe96}. Proving this would confirm \cite[Conjecture 3.3]{arfr16}.

The remainder of this section will proceed as follows. In Subsection \ref{s:TtR} we show that we can consider the walk on the tree in the RTRW framework. This is done by extending the GW-tree conditioned to survive into a two-sided tree, constructing holding times for a RTRW using a sequence of GW-trees and showing that the finite trees are sufficiently short so that the walk never deviates too far from the backbone. In Subsection \ref{s:DTT} we decompose the trapping times and use Corollary \ref{c:ign} to remove parts of the excursions which do not contribute to the limiting process. Finally, in Subsection \ref{s:ARTRW}, we show that the asymptotic \eqref{LogAss} holds with these reduced excursion times. This allows us to deduce the result from Theorem \ref{t:fluc}.

\subsection{The walk on the tree as a randomly trapped random walk}\label{s:TtR}
In this section we show that it suffices to consider a certain RTRW instead of the walk on a GW-tree. We begin by extending the GW-tree to a two sided version. A two-sided tree $\Tc^*$ can be constructed as the extension of a subcritical GW-tree conditioned to survive by using the infinite backbone $\Yc^*=(...,\rho_{-1},\rho_0,\rho_1,...)$ and i.i.d.\ branches. We then write $\Tc$ for the maximal subtree of $\Tc^*$ rooted at $\rho_0$. Let $X^*$ be a random walk on $\Tc^*$ and $X$ the walk on $\Tc$ coupled to $X^*$ so that 
\[X_n=X^*_{\Ls(n)} \quad \text{where} \quad \Ls(n)=\min\left\{m\geq 0:\sum_{k=1}^m\ind_{\{X^*_{k-1},X^*_k\in\Tc\}}\geq n\right\}.\]
The process $X$ is then a biased random walk on $\Tc$ whose transitions coincide with the transitions of $X^*$ restricted to $\Tc$. The two walks $X$ and $X^*$ deviate by at most the time spent by $X^*$ in $\Tc^*\setminus\Tc$. By transience of the embedded walk on $\Yc^*$, the walk $X^*$ almost surely spends a finite amount of time outside the infinite sub-tree rooted at $\rho_0$ (cf.\ \cite{bo17}). In particular, $d(X_n,X^*_n)<\log(n)$ with high probability therefore, since we consider polynomial scaling, we can consider the walk $X^*$ instead of $X$. 

We now construct the holding times of the randomly trapped random walk via a sequence of i.i.d.\ trees. For $x\in\Yc^*$, let $\Tc^{*-}_x$ be the branch rooted at $x$ in $\Tc^*$. Denote by $\oT_x$ the finite tree $\Tc^{*-}_x$ with an additional vertex $\oR_x$ (germ) appended as a parent of the root $x$. Transitions to this additional vertex will correspond to transitions along the backbone.

%

Consider a walk $W$ on $\oT_x$ with transition probabilities 
\begin{flalign}\label{e:W}
\Pt^{\oT_x}(W_{n+1}=y|W_n=w)=
\begin{cases} 
1, & \text{if } x=y=\oR, \\
\frac{\beta+1}{1+\beta (|c(w)|+1)}, & \text{if } w=x, y=\oR_x, \\
\frac{\beta}{1+\beta (|c(w)|+1)}, & \text{if } w=x, y \in c(w), \\
\frac{\beta}{1+\beta|c(w)|}, & \text{if } w \notin\{x,\oR_x\}, y \in c(w),  \\
\frac{1}{1+\beta|c(w)|}, & \text{if } w \notin\{x,\oR_x\}, y =\overleftarrow{w},  \\
0, & \text{otherwise.} \\
\end{cases} 
\end{flalign} 
An excursion by $W$ in $\oT_x$ started from $x$ until absorption in $\oR_x$ has the same distribution as the time taken to move between backbone vertices of $\Tc^*$ started from $x$. In particular, we can construct a randomly trapped random walk on $\Zb$  with embedded walk given by the projection of $Y^*$ (the embedded walk of $X^*$) onto $\Zb$, environment determined by $\omega=(\oT_{\rho_k})_{k \in \Zb}$ and excursion times given by the excursions of $X^*$. In particular, we have that the RTRW is at $k \in \Zb$ precisely when $X^*$ is in $\Tc^{*-}_{\rho_k}$. 

For $C$ large, up to level $n$ there will be no branches of height greater than $C\log(n)$ with high probability (cf.\ \cite{bo18}). It follows that $X^*$ and the RTRW deviate by at most $C\log(n)$ with high probability.
Since we consider polynomial scaling, it suffices to consider this randomly trapped random walk instead of $X^*$ and thus $X$. To ease notation, we now denote by $X$ the randomly trapped random walk.

%

\subsection{A decomposition of the trapping times}\label{s:DTT}
By Theorem \ref{t:fluc} it suffices to show that for any $l \in\Nb$, $\lambda>0$ and a real valued function $f$ satisfying $f(1,\lambda)\asymp\lambda^{\gamma}$,
\begin{flalign*}
n_k(\varsigma)\log\left(\Er\left[\Et^\oT\left[\exp\left(-\frac{\lambda}{n_k(\varsigma)^{1/\gamma}}(\eta_0-\Eb[\eta_0])\right)\right]^l\right]\right)\sim f(l,\lambda)
\end{flalign*}
as $k \rightarrow \infty$. With a slight abuse of notation we will write $n$ instead of $n_k(\varsigma)$ until we need to consider the specific subsequences.

The purpose of this section is to decompose the holding times of the RTRW; we begin by proving upper bounds on the expected excursion times of biased random walks in trees. Let $\Ts$ be a fixed tree rooted at a vertex $\rho$ with germ $\oR$. Let $W$ be a random walk on $\Ts$ with transitions given by \eqref{e:W} and first return times $\tau^+_x$. Let $\oT$ be a random tree with germ $\oR$, root $\rho$ and first generation size $\xi^*-1$ which are the roots of independent GW-trees $(\Tc^\xi_k)_{k=1}^{\xi^*-1}$. Let $\oH$ denote the height of $\oT$. 
\begin{lem}\label{l:alpMom}
Suppose that $\Er[\xi^3]<\infty$, $\beta>1$ and $\mu<1$ then, for any $\alpha\in(1,2]$ there exist constants $C_1, C_2<\infty$ such that for any $h\geq 1$
\begin{enumerate}
\item
\[\Et^{\Ts}_\rho[(\tau^+_{\oR})^\alpha]\leq C_1\sum_{k=0}^\infty \sum_{j=0}^{\infty}Z_k^{\Ts}(Z_j^{\Ts})^2\beta^{j\alpha};\]
\item 
\[\Er\left[\Et^{\oT}_\rho[(\tau^+_{\oR})^\alpha]\big|\oH\leq h\right]\leq C_2h(\beta^\alpha\mu)^h.\]
\end{enumerate}
\begin{proof}
For $x$ in $\Ts$ let $v_x$ denote the number of visits to $x$ before reaching $\oR$. Using convexity and that $v_x$ is geometrically distributed we have that
\begin{flalign*}
\Et^{\Ts}_\rho[(\tau^+_{\oR})^\alpha] 
 = \Et^{\Ts}_\rho\left[\left(\sum_{x\in\Ts}v_x\right)^\alpha\right] 
 \leq \left(\sum_{x\in\Ts}1\right)\left(\sum_{x\in\Ts}\Et^{\Ts}_\rho\left[v_x^\alpha\right]\right) 
 \leq C\sum_{k=0}^\infty Z_k^{\Ts}\sum_{x\in\Ts}(\beta |c(x)|+1)^\alpha\beta^{\alpha|\rho-x|}. 
\end{flalign*}
Noting that $(\beta |c(x)|+1)^\alpha\leq (\beta |c(x)|+1)^2\leq C((Z_{|x|+1}^{\Ts})^2+(Z_{|x|}^{\Ts})^2)$ gives the first result.

Since $\xi$ has finite third moments and $\Pr(\oH\leq 1)$ is bounded below by a constant we have that, by \cite[Lemma 4.1]{bo18_1}, 
\[\Er[Z_k^{\oT}(Z_j^{\oT})^2|\oH\leq h]\leq \frac{\ind_{\{j,k\leq h\}}\Er[Z_k^{\oT}(Z_j^{\oT})^2]}{\Pr(\oH\leq h)}\leq C\mu^{k\lor j}\ind_{\{j,k\leq h\}}.\]
Substituting this into the previous equation gives second result.
\end{proof}
\end{lem}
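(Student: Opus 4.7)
My plan proceeds by a decomposition-and-convexity argument for Part (i), then a direct computation using the moment identity for subcritical branching processes conditioned on their height for Part (ii).

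For Part (i), I would first write the excursion time as $\tau^+_{\oR} = \sum_{x\in\Ts} v_x$, where $v_x$ denotes the number of visits of $W$ to $x$ prior to its return to $\oR$. The power-mean inequality gives $(\tau^+_\oR)^\alpha \leq |\Ts|^{\alpha-1}\sum_x v_x^\alpha$, and since $\alpha\leq 2$ this is bounded by $|\Ts|\sum_x v_x^\alpha = \bigl(\sum_k Z_k^\Ts\bigr)\sum_x v_x^\alpha$, which accounts for the outer factor $\sum_k Z_k^\Ts$ in the stated bound. What remains is to estimate $\Et^\Ts_\rho[v_x^\alpha]$ for each vertex $x$.

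Conditional on reaching $x$ before $\oR$, $v_x$ is geometrically distributed with escape parameter $1-f_x := \Pt^\Ts_x(\tau_\oR < \tau^+_x)$, so $\Et^\Ts_\rho[v_x^\alpha] \leq C_\alpha(1-f_x)^{-\alpha}$. The key estimate is the lower bound $1-f_x \geq c\,\beta^{-(|x|-1)}(1+\beta|c(x)|)^{-1}$. I would derive this by factoring the escape event: from $x$ the walk must first step to $\overleftarrow{x}$ (with probability $(1+\beta|c(x)|)^{-1}$) and then reach $\oR$ before returning to $x$. Projecting $W$ onto the unique path from $\overleftarrow{x}$ to $\oR$ and noting that excursions into the side branches (finite subtrees of $\Ts$) return almost surely, the projected walk becomes a simple biased random walk with parent-probability $1/(1+\beta)$ and child-probability $\beta/(1+\beta)$; Gambler's ruin on a path of length $|x|$ gives escape probability of order $\beta^{-(|x|-1)}$. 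Combining the two factors yields $\Et^\Ts_\rho[v_x^\alpha]\leq C(\beta|c(x)|+1)^\alpha\beta^{\alpha|x|}$. Summing over $x$ grouped by generation and using $(\beta|c(x)|+1)^\alpha \leq C(|c(x)|^2+1)$ for $\alpha\leq 2$, together with $\sum_{x:|x|=j}|c(x)|^2 \leq (Z_{j+1}^\Ts)^2$ and an index shift absorbed into $C_1$, produces the double sum in the claim.

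For Part (ii), I would apply the deterministic bound of Part (i) and take the conditional expectation $\Er[\,\cdot\mid\oH\leq h]$, interchange the expectation with the double sum, and use $\Er[Z_k^\oT(Z_j^\oT)^2\mid\oH\leq h]\leq C\mu^{k\vee j}\ind_{\{k,j\leq h\}}$ from \cite[Lemma~4.1]{bo18_1}. Splitting the inner sum according to whether $k\leq j$ or $k>j$ bounds the expression by $C\sum_{j=0}^h(j+1)(\beta^\alpha\mu)^j$. In the intended application $\alpha$ exceeds the critical exponent $\gamma = -\log\mu/\log\beta$, so $\beta^\alpha\mu > 1$ and this geometric sum with linear prefactor is controlled by its final term, giving the desired $C_2 h(\beta^\alpha\mu)^h$.

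The technical heart of the argument is the escape-probability lower bound $1-f_x \geq c\beta^{-|x|}(1+\beta|c(x)|)^{-1}$: one must carefully isolate the local degree contribution at $x$ from the global Gambler's ruin factor $\beta^{-|x|}$, and then justify the reduction of the walk on the whole tree to a one-dimensional biased random walk on the backbone path via recurrence of side-branch excursions. Everything downstream—applying the power-mean inequality, summing $(1-f_x)^{-\alpha}$ over $x$, and evaluating the deterministic double series using the conditional moment formula for $Z_k^\oT(Z_j^\oT)^2$—is routine bookkeeping.
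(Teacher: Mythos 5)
Your proposal is correct and follows essentially the same route as the paper: write $\tau^+_{\oR}=\sum_x v_x$, apply convexity, use that $v_x$ is geometric with escape probability of order $\beta^{-|x|}(1+\beta|c(x)|)^{-1}$ to get the per-vertex bound $C(\beta|c(x)|+1)^\alpha\beta^{\alpha|x|}$, and then condition on $\oH\leq h$ via $\Er[Z_k^{\oT}(Z_j^{\oT})^2\mid\oH\leq h]\leq C\mu^{k\vee j}\ind_{\{j,k\leq h\}}$ from the cited lemma. Your extra details simply make explicit what the paper leaves implicit — in fact your generation-wise bound $\sum_{x:|x|=j}|c(x)|^2\leq (Z_{j+1}^{\Ts})^2$ is the cleaner way to land exactly on the stated double sum, and your observation that the final estimate $\sum_{j\leq h}(j+1)(\beta^\alpha\mu)^j\leq Ch(\beta^\alpha\mu)^h$ uses $\beta^\alpha\mu>1$ (i.e.\ $\alpha>\gamma$, which holds in every application) is correct and implicit in the paper's proof as well.
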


Since we can consider a single excursion in an individual branch, we will simplify notation by studying the holding time $\eta_0$ as the excursion time of the random walk $W$ (whose transition probabilities are given in \eqref{e:W}) on the dummy tree $\oT$ with germ $\oR$ and root $\rho$ which has $\xi^*-1$ offspring $(\rho_k)_{k=1}^{\xi^*-1}$ which are roots of independent GW-trees $(\Tc^\xi_k)_{k=1}^{\xi^*-1}$. We write $\oT^\xi_k$ for the tree formed by attaching the root $\rho$ to the tree $\Tc^\xi_k$. Let 
\[N_k:=\sum_{n=1}^{\eta_0}\ind_{\{W_{n-1}=\rho,\;W_n=\rho_k\}}  \]
be the number of entrances into the tree $\Tc^\xi_k$. Letting $\tau^{0,k}=-1$ we have that $\tau^{j,k}:=\inf\{n>\tau^{j-1,k}:W_n=\rho, \; W_{n+1}=\rho_k\}$ is the start time of the $j\th$ excursion into $\Tc^\xi_k$ and $\tau^{j,k}_\rho:=\inf\{n>\tau^{j,k}:W_n=\rho\}$ is the end time of the excursion. It follows that
\[\eta_0=1+\sum_{k=1}^{\xi^*-1}\sum_{j=1}^{N_k}(\tau^{j,k}_\rho-\tau^{j,k})\]
where the $1$ corresponds to the final step from the root to the germ. Each trap $\Tc^\xi_k$ has a deepest vertex which we denote by $\delta_k$ (if there is more than one of equal depth then we choose one from the final generation uniformly at random). 
We can then write
\[G_{j,k}:=\sum_{n=\tau^{j,k}}^{\tau^{j,k}_\rho}\ind_{\{W_n=\delta_k\}}\]
to be the number of visits to $\delta_k$ on the $j\th$ excursion into $\Tc^\xi_k$. In particular, we write 
\begin{enumerate}[i)]
\item
$\tau^{0,j,k}_\delta:=0$ and $\tau^{i,j,k}_\delta:=\inf\{n>\tau^{i-1,j,k}_\delta:W_n=\delta_k\}$ to be the hitting times of $\delta_k$ on the $j\th$ excursion into $\Tc^\xi_k$;
\item 
$T_{i,j,k}:= \tau^{i+1,j,k}_\delta-\tau^{i,j,k}_\delta$ to be the duration of the $i\th$ excursion from $\delta_k$ to itself on the $j\th$ excursion into $\Tc^\xi_k$ for $i=1,...,G_{j,k}-1$;
\item 
$T_{j,k}:= (\tau^{j,k}_\rho-\tau^{G_{j,k}-1,j,k}_\delta+\tau^{1,j,k}_\delta-\tau^{j,k})\ind_{\{G_{j,k}>0\}}+(\tau^{j,k}_\rho-\tau^{j,k})\ind_{\{G_{j,k}=0\}}$ to be the remainder of the $j\th$ excursion into $\Tc^\xi_k$ (which consists of the first journey to the deepest point, the last journey from the deepest point and the entire excursion if the deepest point is not reached).
\end{enumerate}
Using the convention that $\sum_{i=1}^{-1}\lambda_i=0$, we then have
\[\eta_0= 1+\sum_{k=1}^{\xi^*-1}\sum_{j=1}^{N_k} \left(T_{j,k}+\sum_{i=1}^{G_{j,k}-1}T_{i,j,k}\right).\]

We now show that we can ignore the parts of the excursion given by $T_{j,k}$ thus allowing us to consider holding times of the form given in \eqref{e:Red1} below. The proof of this follows by comparing the excursions forming $T_{j,k}$ with excursion times in unconditioned GW-trees in which the bias along the spine (from the root to the deepest point) acts towards the root and the bias inside the foliage acts away from the spine (but we control the height of the foliage).
\begin{lem}\label{l:outback}
Under the assumptions of Theorem \ref{t:GWT}, there exists $\alpha>\gamma$ such that 
\[\Eb\left[\left(1+\sum_{k=1}^{\xi^*-1}\sum_{j=1}^{N_k}T_{j,k}\right)^\alpha\right]<\infty.\] 
\end{lem}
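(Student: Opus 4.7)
The plan is to exploit the fact that $T_{j,k}$ excludes the ``bouncing'' at the deepest vertex $\delta_k$---which is precisely the part of the excursion that produces the $\beta^{\oH}$ heavy tail of $\eta_0$. The first step is to decompose each $T_{j,k}$ into an \emph{entry segment}, running from $\tau^{j,k}$ up to the first visit of $\delta_k$ (or the whole sub-excursion on $\{G_{j,k}=0\}$), and an \emph{exit segment}, from the last visit of $\delta_k$ to the return to $\rho$. Both segments avoid $\delta_k$ in their interiors, so each is naturally analysed as an excursion in the trimmed tree in which $\delta_k$ has been declared absorbing.

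For the entry segment I would write the trajectory as alternating spine steps along the path $\rho_k\to\delta_k$ (on which the embedded walk is biased with ratio $\beta$ towards $\delta_k$) and excursions into side branches hanging off that spine. Because $\delta_k$ is the strictly deepest vertex of $\Tc^{\xi}_{k}$, the side branch at the depth-$i$ spine vertex is an unconditioned subcritical GW-tree of height at most $\oH-i-1$. Applying Lemma~\ref{l:alpMom}(2) to a typical side branch and averaging against the GW-tree tail $\asymp\mu^{h'}$ yields an $\alpha$-th moment bounded by a constant multiple of $\sum_{h'\geq 0}h'(\beta^\alpha\mu^2)^{h'}$. Since $\beta^\alpha\mu^2<1$ iff $\alpha<2\gamma$, this series converges for every $\alpha\in(\gamma,2\gamma)$; combined with polynomial-in-$\oH$ control of the spine traversal time and exponential tails for the number of visits to each spine vertex, Minkowski's inequality delivers $\Eb[(\text{entry segment})^\alpha]<\infty$ in this range.

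For the exit segment a direct bound fails because the walker is conditioned on the rare event of reaching $\rho$ before returning to $\delta_k$, which has probability $\asymp\beta^{-\oH}$; I would instead invoke reversibility of the biased random walk (whose reversible measure is proportional to $\beta^{|x|}\mathrm{deg}(x)$). Path reversal identifies each excursion from $\delta_k$ to $\rho$ avoiding $\delta_k$ with an excursion from $\rho$ to $\delta_k$ avoiding $\rho$, the path probabilities differing only by the constant factor $\pi(\delta_k)/\pi(\rho)$; as a distribution over lengths the exit segment therefore matches the entry segment and inherits its $\alpha$-moment bound. Having controlled a single $T_{j,k}$, I would sum over $j\leq N_k$ using $\bigl(\sum_{j=1}^{N_k}T_{j,k}\bigr)^{\alpha}\leq N_k^{\alpha}\sum_{j=1}^{N_k}T_{j,k}^{\alpha}$ with $N_k$ stochastically dominated by a geometric random variable of bounded parameter, and over $k\leq\xi^{\ast}-1$ by Minkowski, finiteness being guaranteed by $\Er[\xi^{2+\gamma}]<\infty$. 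The main obstacle I anticipate is the rigorous path-reversal step on the restricted tree $\Tc^\xi_k\setminus\{\delta_k\}$: the asymmetric germ transition at $\rho$ and the possibility that $\delta_k$ is not uniquely deepest at depth $\oH$ require care, since the height bookkeeping for the side branches and the reversible measure must be adjusted to this half-absorbing boundary configuration.
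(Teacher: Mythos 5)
Your proposal reaches the same key estimate as the paper and is essentially sound, but it routes one component differently. The core of your argument --- spine decomposition of the trap, bounding the time in the side branch at distance $j$ from $\delta_k$ via Lemma \ref{l:alpMom}(2) by $Cj(\beta^\alpha\mu)^j$, and averaging against the height tail $\Pr(\Hc=h)\leq C\mu^h$ to arrive at a series governed by $\beta^\alpha\mu^2<1$, i.e.\ $\alpha<2\gamma$ (intersected with $\alpha\le 2$ so that Lemma \ref{l:alpMom} and the $\xi^*$-moment bound apply) --- is exactly the paper's computation. Where you diverge is the return leg from $\delta_k$ to $\rho$: you reduce it to the entry leg by path reversal, using reversibility of the nearest-neighbour walk on the (tree-shaped) trap, so that the excursion $\delta_k\to\rho$ avoiding $\delta_k$ has, conditionally, the same length law as the first-passage path $\rho\to\delta_k$ avoiding $\rho$. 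This is legitimate (every nearest-neighbour chain on a tree is reversible, so the germ asymmetry only enters through the constant $\pi(\rho)/\pi(\delta_k)$, which cancels in the conditional law), and it spares you from analysing the conditioned chain directly. The paper instead treats all three pieces ($T^{(1)}$, $T^{(2)}$, $T^{(3)}$) head-on: it observes that conditioning on which of $\rho,\delta$ is hit first does not alter the dynamics inside the subtraps, and that by the Gambler's ruin the conditioned walk on the spine is stochastically dominated by a walk biased towards the target, so the local times $\Lc_k$ at spine vertices have $\alpha$-moments bounded uniformly in $h$ and $k$. So your assertion that ``a direct bound fails'' for the exit segment is not accurate --- the direct conditioning argument is precisely what the paper uses --- but your reversal argument buys the same conclusion at the cost of some bookkeeping (last-exit decomposition, the reversing measure at the germ, ties for the deepest vertex), which you correctly flag. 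Two loose ends to tidy: the sub-excursions with $G_{j,k}=0$ are conditioned to return to $\rho$ before reaching $\delta_k$, so on the spine they are \emph{not} biased towards $\delta_k$ as your entry-segment description assumes --- you need the same domination-of-local-times observation there (this is the paper's $T^{(3)}$); and in the final assembly over $j\le N_k$ and $k\le\xi^*-1$ you should make explicit that, given the tree, the excursion durations are independent of $N_k$ (with $N_k$ dominated by a geometric of bounded parameter), which is what licenses splitting the moments.
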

\begin{proof}
Write $T^{(1)}:= (\tau^{1,1}_\rho-\tau^{G_{1,1}-1,1,1}_\delta)\ind_{\{G_{1,1}>0\}}$ to be the time taken to return to the root from the deepest point, $T^{(2)}:= (\tau^{1,1,1}_\delta-\tau^{1,1})\ind_{\{G_{1,1}>0\}}$ to be the time taken to reach the deepest point from the root and $T^{(3)}:= (\tau^{1,1}_\rho-\tau^{1,1})\ind_{\{G_{1,1}=0\}}$ to be the duration of an excursion when it does not reach the deepest point. Then $T_{1,1}=T^{(1)}+T^{(2)}+T^{(3)}$. By convexity we have that 
\begin{flalign*}
\Eb\left[\left(1+\sum_{k=1}^{\xi^*-1}\sum_{j=1}^{N_k}T_{j,k}\right)^\alpha\right]
& \leq C\Eb\left[(\xi^*)^{\alpha}\right]\Eb\left[N_1^\alpha\right]\left(\Eb\left[(T^{(1)})^\alpha\right]+\Eb\left[(T^{(2)})^\alpha\right]+\Eb\left[(T^{(3)})^\alpha\right]\right).
\end{flalign*} 
Since $\xi$ has finite third moments we have that $\xi^*$ has finite second moments and therefore $\Eb\left[(\xi^*)^{\alpha}\right]<\infty$ for $\alpha\leq 2$. The number of entrances into the first trap is geometrically distributed with termination probability $(\beta+1)/(2\beta+1)$ and therefore $\Eb\left[N_1^\alpha\right]<\infty$ for any $\alpha<\infty$. 

For convenience, we will consider a biased walk $W$ on a dummy trap $\Tc^\xi$ with root $\rho$ and deepest point $\delta$. Let $\Hc$ denote the height of $\Tc^\xi$ and $\Pr^h(\cdot)=\Pr(\cdot|\Hc=h)$ the environment law conditioned on the hight of the tree. Then
\begin{flalign*}
\Eb\left[(T^{(1)})^\alpha\right]
&\leq\sum_{h=1}^\infty \Pr(\Hc=h)\Er^h\left[\Et^{\Tc^\xi}_{\delta}\left[(\tau_\rho^+)^\alpha|\tau_\rho^+<\tau_\delta^+\right]\right].
\end{flalign*}

Recall that a trap $\Tc^\xi$ has a unique self-avoiding path connecting $\rho$ and $\delta$ which we call the spine and denote by $\delta^0=\delta,\delta^1,...,\delta^\Hc=\rho$. For a vertex $\delta^k$ on the spine, we refer to the vertices which are descendants of $\delta^k$ but not on the spine as being in the subtrap $\Tc^\xi_{\delta^k}$ at $\delta^k$. Conditioning on $\tau_\rho^+<\tau_\delta^+$ or $\tau_\delta^+<\tau_\rho^+$ does not change the transition probabilities inside the subtraps or the probability of moving into a subtrap (cf.\ \cite{arfrgaha12}). Let 
\[\Lc_k=\sum_{n=1}^{\tau_\delta^+\land\tau_\rho^+}(\ind_{\{W_{n-1}=\delta^{k-1}\}\cap\{W_n=\delta^k\}}+\ind_{\{W_{n-1}=\delta^{k+1}\}\cap\{W_n=\delta^k\}})\] 
denote the number of visits to $\delta^k$ from one its neighbours on the spine before reaching $\rho$ or $\delta$. Using convexity we then have that
\begin{flalign*}
\Eb\left[(T^{(1)})^\alpha\right]
& \leq \sum_{h=1}^\infty \Pr(\Hc=h)h^{\alpha-1}\sum_{k=1}^h\Er^h\left[\Et^{\Tc^\xi}_{\delta}\left[\Lc_k^\alpha|\tau_\rho^+<\tau_\delta^+\right]\right]\Er^h\left[\Et^{\Tc^\xi}_{\delta^k}\left[(\tau^+_{\delta^{k-1}}\land\tau^+_{\delta^{k+1}})^\alpha\right]\right].
\end{flalign*}
Similarly, we have
\begin{flalign*}
\Eb\left[(T^{(2)})^\alpha\right]
& \leq \sum_{h=1}^\infty \Pr(\Hc=h)h^{\alpha-1}\sum_{k=1}^h\Er^h\left[\Et^{\Tc^\xi}_{\rho}\left[\Lc_k^\alpha|\tau_\delta^+<\tau_\rho^+\right]\right]\Er^h\left[\Et^{\Tc^\xi}_{\delta^k}\left[(\tau^+_{\delta^{k-1}}\land\tau^+_{\delta^{k+1}})^\alpha\right]\right], \\
\Eb\left[(T^{(3)})^\alpha\right]
& \leq \sum_{h=1}^\infty \Pr(\Hc=h)h^{\alpha-1}\sum_{k=1}^h\Er^h\left[\Et^{\Tc^\xi}_{\rho}\left[\Lc_k^\alpha|\tau_\rho^+<\tau_\delta^+\right]\right]\Er^h\left[\Et^{\Tc^\xi}_{\delta^k}\left[(\tau^+_{\delta^{k-1}}\land\tau^+_{\delta^{k+1}})^\alpha\right]\right]. 
\end{flalign*}
The only term that differs in these expressions is the expected local time at vertices on the spine. By the Gambler's ruin, the walk on the spine can be stochastically dominated by a biased random walk where the bias favours steps towards $\delta$ when conditioned on $\tau_\delta^+<\tau_\rho^+$ and $\rho$ when conditioned on $\tau_\rho^+<\tau_\delta^+$. It immediately follows that there exists a constant $C$ which is independent of $h$ and $k$ such that 
\[\Er^h\left[\Et^{\Tc^\xi}_{\delta}\left[\Lc_k^\alpha|\tau_\rho^+<\tau_\delta^+\right]\right],\Er^h\left[\Et^{\Tc^\xi}_{\rho}\left[\Lc_k^\alpha|\tau_\delta^+<\tau_\rho^+\right]\right],\Er^h\left[\Et^{\Tc^\xi}_{\rho}\left[\Lc_k^\alpha|\tau_\rho^+<\tau_\delta^+\right]\right]\leq C.\]

Since the number of children $\delta^k$ has off the spine is stochastically dominated by a size biased random variable and each of these is the root of a GW-tree conditioned to have height at most $k-1$, by Lemma \ref{l:alpMom},
\[\Er^h\left[\Et^{\Tc^\xi}_{\delta^k}\left[(\tau^+_{\delta^{k-1}}\land\tau^+_{\delta^{k+1}})^\alpha\right]\right]
\leq\Er\left[\Et^{\oT}_\rho[(\tau^+_{\oR})^\alpha]\big|\oH\leq k\right]
 \leq Ck(\beta^\alpha\mu)^{k}.\]
By \eqref{l:cmu} we have that $\Pr(\Hc=h)\leq C\mu^h$ for some constant $C<\infty$. It follows that for $i=1,2,3$
\[\Eb\left[(T^{(i)})^\alpha\right]\leq C\sum_{h=1}^\infty \mu^hh^{\alpha-1}\sum_{k=1}^hk(\beta^\alpha\mu)^k \leq C\sum_{h=1}^\infty h^{\alpha}(\beta^\alpha\mu^2)^h\]
which is finite since $\beta^\alpha \mu^2<1$.
\end{proof}

By Lemma \ref{l:outback} and Corollary \ref{c:ign} we can now consider holding times of the form
\begin{flalign}\label{e:Red1}
\sum_{k=1}^{\xi^*-1}\sum_{j=1}^{N_k} \sum_{i=1}^{G_{j,k}-1}T_{i,j,k}.
\end{flalign}

The excursions $T_{i,j,k}$ from the deepest point $\delta_k$ in the trap $\Tc^\xi_k$ are typically very short because the bias is acting towards $\delta_k$. They do, however, depend on the height of the trap which (through $G_{j,k}$) is largely responsible for the heavy tailed behaviour in the above sum. We therefore want to consider excursion times from the deepest point that do not depend on the height of the trap; for this, we will replace $T_{i,j,k}$ with excursion times on $\Tc^\xi_k$ extended to an infinite trap $\Tc_k^*$ using the construction of \cite{geke99}. That is, starting from $\Tc^\xi_k$, we can iteratively construct a random infinite tree $\Tc_k^*$ with deepest point $\delta_k$ such that, denoting by $\delta_k^1$ the parent of $\delta_k$ and $\delta_k^{l+1}$ the parent of $\delta_k^l$, 
\begin{enumerate}[1)]
\item
the subtree of $\Tc_k^*$ consisting of $\delta_k^l$ and all of its descendants is a GW-tree conditioned to have height $l$;
\item 
for $l=|\rho_k-\delta_k|$, the subtree of $\Tc_k^*$ consisting of $\delta_k^l$ and all of its descendants is $\Tc^\xi_k$.
\end{enumerate}

We now show that we can replace $T_{i,j,k}$ with excursion times on $\Tc^*_k$. We begin by constructing these excursion times.
For $k= 1,...,\xi^*-1$ let
 \begin{enumerate}[1)]
 \item 
$X_n^k$ be independent $\beta$-biased random walks on $\Tc_k^*$ started from $X_0^k=\delta_k$;
\item 
$T_{1,1,k}^*=\inf\{n>0:X_n^k=\delta_k\}$ be the first return time to $\delta_k$; 
\item 
$\Ac_{1,1,k}:=\{T_{1,1,k}^*>\inf\{n>0:X_n^k=\delta_k^{|\rho_k-\delta_k|}\}$ be the event that the walk leaves $\Tc^\xi_k$ before returning to $\delta_k$. 
\end{enumerate}
We then have that 
\begin{enumerate}[1)]
\item
$\tilde{T}_{1,1,k}:=T_{1,1,k}^*\ind_{\Ac_{1,1,k}}+T_{1,1,k}\ind_{\Ac_{1,1,k}^c}$ has the same law as $T_{1,1,k}^*$ but is coupled to $T_{1,1,k}$ such that $T_{1,1,k}=\tilde{T}_{1,1,k}+\ind_{\{\Ac_{1,1,k}\}}(T_{1,1,k}-T_{1,1,k}^*)$;
\item 
$(\tilde{T}_{1,1,k})_{k\geq 1}$ are i.i.d.\ with respect to $\Pb$;
\item
for each $k$ we can construct $(\tilde{T}_{i,j,k})_{i,j\geq 1}$ which are
\begin{enumerate}[i)]
\item
i.i.d.\ with respect to $\Pt^{\Tc^*_k}$;
 \item 
coupled to $T_{i,j,k}$ such that $T_{i,j,k}=\tilde{T}_{i,j,k}+\ind_{\{\Ac_{i,j,k}\}}(T_{i,j,k}-T_{i,j,k}^*)$ where $(T_{i,j,k}^*)_{i,j\geq 1}$ are independent excursion times on $\Tc_k^*$ and $\Ac_{i,j,k}$ are the events that the corresponding walks leave $\Tc_k^\xi$.
 \end{enumerate}
 \end{enumerate} 
 
Before showing that we can replace $T_{i,j,k}$ with $\tilde{T}_{i,j,k}$, we first show that $\tilde{T}_{i,j,k}$ have finite second moments. 
 \begin{lem}\label{l:finVar}
Under the assumptions of Theorem \ref{t:GWT} we have that $\Eb[\tilde{T}_{1,1,1}^2]<\infty$.
 \end{lem}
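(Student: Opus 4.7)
The plan is to exploit the structure of $\Tc_1^*$: an infinite spine $\delta_1 = \delta_1^0, \delta_1^1, \ldots$ with finite side subtrees at each $\delta_1^l$ of height at most $l-1$. Because the $\beta$-biased walk favours children over parents, in this tree the walk drifts toward $\delta_1$. Since $\delta_1$ is a leaf in $\Tc_1^*$, the walk moves to $\delta_1^1$ deterministically in one step, so it suffices to bound $\Eb[\tau^2]$ where $\tau$ is the hitting time of $\delta_1$ by the walk started at $\delta_1^1$. I would decompose $\tau = R + \sum_{l \geq 1} T_l$, where $R$ is the total time at spine vertices and $T_l$ the total time in side excursions from $\delta_1^l$.

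The sequence of spine-vertex visits (the embedded spine chain) has \emph{tree-independent} transitions: side excursions always return to the current spine vertex, so conditional on a spine-to-spine move the walk goes down with probability $\beta/(\beta+1)$ and up with probability $1/(\beta+1)$. A standard Gambler's ruin then gives $\Pb(M \geq L) \leq C\beta^{-L}$ and $\Eb[(V_l^{\mathrm{spine}})^2] \leq C\beta^{-l}$, where $M$ is the maximum level visited and $V_l^{\mathrm{spine}}$ the number of spine-chain visits to level $l$. For a single side excursion at $\delta_1^l$, applying Lemma \ref{l:alpMom} part 1 to the side subtree (with the side child as root and $\delta_1^l$ as germ), together with a generation-size bound of the form $\Er[Z_k^\Ts (Z_j^\Ts)^2 \mid \oH \leq l-1] \leq C\mu^{k \vee j}$, yields the annealed bound $\Eb[\zeta_l^2] \leq C\, l\, (\beta^2\mu)^{l-1}$ for a single side excursion. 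A Wald-type identity -- using that quenched the side excursions are i.i.d.\ and that the number of side steps per spine visit is geometric with parameter $(\beta+1)/(1+\beta k_l)$, where $k_l$ is the number of children of $\delta_1^l$ -- combined with $\Er[(1+k_l)^2] < \infty$ (which holds under $\Er[\xi^{2+\gamma}] < \infty$), gives $\Eb[T_l^2] \leq C\, l\, (\beta\mu)^l$. Minkowski's inequality then yields $\|\tau\|_2 \leq \|R\|_2 + \sum_l \sqrt{l(\beta\mu)^l}$, which is finite because the assumption $\beta < \mu^{-1}$ ensures $\beta\mu < 1$.

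The main technical obstacle is controlling the joint dependence of $V_l$ on $k_l$ and of the second moment of $\zeta_l$ on the random side subtrees at $\delta_1^l$. A careful separation of quenched and annealed averages -- exploiting that $V_l^{\mathrm{spine}}$ is tree-independent while $k_l$ has uniformly bounded second moments -- together with a verification that the precise height-conditioning governing the side subtrees of $\Tc_1^*$ satisfies the generation-size bound stated above, should resolve this.
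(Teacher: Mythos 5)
Your argument is correct and is essentially the paper's own proof: the paper likewise decomposes the excursion from $\delta$ by spine level, uses the Gambler's ruin bound $\Pt^{\Tc^*}_\delta(\tau_{\delta^l}^+<\tau_\delta^+)\leq C\beta^{-l}$ together with bounded second moments of the geometric number of spine visits, bounds the time per visit to the side subtrap at level $l$ by Lemma \ref{l:alpMom} (you simply rederive part 2 from part 1), and sums $\sum_l l(\beta\mu)^l<\infty$ using $\beta\mu<1$. Your use of Minkowski to handle cross terms is a slightly more explicit bookkeeping of the same estimate, not a different route.
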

\begin{proof}
Let $\delta^*$ denote the spine $\{\delta^0,\delta^1,...\}$ of $\Tc^*$ and, for $x\in\Tc^*$, let $v_x$ denote the number of visits to $x$ before returning to $\delta=\delta^0$. By the Gambler's ruin we have that $\Pt^{\Tc^*}_\delta(\tau_{\delta^k}^+<\tau_{\delta}^+)\leq C\beta^{-k}$.  Since the walk on the spine is $\beta$-biased we have that $\Et^{\delta^*}_{\delta^k}[(v_{\delta^k})^2]\leq C$ independently of $k$. Moreover, the time spent in a subtrap rooted at $\delta^k$ is stochastically dominated by the time spent in the tree $\oT$ conditioned to have height at most $k$. By Lemma \ref{l:alpMom} it follows that
\begin{flalign*}
\Eb[\tilde{T}_{1,1,1}^2] 
 \leq \sum_{k=0}^\infty \Er\left[\Pt^{\Tc^*}_\delta(\tau_{\delta^k}^+<\tau_{\delta}^+)\right]\Er\left[\Et^{\delta^*}_{\delta^k}[(v_{\delta^k})^2]\right]\Er\left[\Et^{\oT}_\rho[(\tau^+_{\oR})^2]|\oH\leq k\right] 
 \leq \sum_{k=0}^\infty\beta^{-k}k(\beta^2\mu)^k 
\end{flalign*}
which is finite since $\beta\mu<1$.
\end{proof}
 
The following lemma shows that we can replace the excursions $T_{i,j,k}$ with excursions on the infinite traps (which are independent of the height of the original trap).
\begin{lem}\label{l:RemCond}
Under the assumptions of Theorem \ref{t:GWT} we have that, for some $\alpha>\gamma$, 
\[\Eb\left[\left(\sum_{k=1}^{\xi^*-1}\sum_{j=1}^{N_k}\sum_{i=1}^{G_{j,k}-1}\ind_{\{\Ac_{i,j,k}\}}(T_{i,j,k}-T_{i,j,k}^*)\right)^{\!\!\!\alpha}\right]<\infty.\] 
 \end{lem}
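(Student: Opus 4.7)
The strategy is to combine the bound $|T_{i,j,k}-T^*_{i,j,k}|\leq T_{i,j,k}+T^*_{i,j,k}$ with two observations: the event $\Ac_{i,j,k}$ has quenched probability at most $C\beta^{-h_k}$ (where $h_k:=|\rho_k-\delta_k|$) by Gambler's ruin on the spine of the $k$-th trap, and both $T_{i,j,k}$ and $T^*_{i,j,k}$ have bounded second moments via (a variant of) Lemma \ref{l:finVar}. Since $\gamma\in(1,2)$, we may fix $\alpha\in(\gamma,2)$.

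The first step is to reduce to the contribution of a single excursion. The key moment inputs are: $\Eb[(\xi^*)^\alpha]=\Er[\xi^{\alpha+1}]/\mu<\infty$ (which follows from the assumption $\Er[\xi^{2+\gamma}]<\infty$ and $\alpha+1<2+\gamma$); $N_1$ has exponential moments, being dominated by a geometric random variable as in \cite[Lemma 5.1]{depeze96}; and $G_{j,1}$ has exponential moments, being geometric with termination probability at least $\beta/(\beta+1)^2$ (from $\delta_1$ the walk has positive probability of reaching $\rho_1$ before returning to $\delta_1$). Applying the power-mean inequality to the outer sum over $k$ and the inner sums over $j$ and $i$, using exchangeability in $k$ of the traps $(\Tc^\xi_k)_{k=1}^{\xi^*-1}$ given $\xi^*$, and conditioning on the sequence of trap visits made by $W$ on $\oT$ (which makes excursion times in distinct visits conditionally independent by the strong Markov property), the task reduces to showing $\Eb[\ind_{\Ac_{1,1,1}}(T_{1,1,1}+T^*_{1,1,1})^\alpha]<\infty$.

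To obtain the single-excursion bound, I would condition on the height $h$ of the trap $\Tc^\xi_1$. A variant of Lemma \ref{l:finVar}, obtained by running the spine decomposition from $\delta_1$ upwards with the spine truncated at level $h$ and applying Lemma \ref{l:alpMom}(2) at each spine vertex, yields $\Eb[T^2_{1,1,1}\mid h]\leq C$ uniformly in $h$; the key sum $\sum_{k=0}^{h}k(\beta\mu)^k$ converges as $\beta\mu<1$. The corresponding bound for $(T^*_{1,1,1})^2$ is given by Lemma \ref{l:finVar} itself. H\"older's inequality with conjugate exponents $p=2/\alpha$ and $q=2/(2-\alpha)$ then yields
\[\Eb[\ind_{\Ac_{1,1,1}}T_{1,1,1}^\alpha\mid h]\leq \Eb[T_{1,1,1}^2\mid h]^{\alpha/2}\Pb(\Ac_{1,1,1}\mid h)^{(2-\alpha)/2}\leq C\beta^{-h(2-\alpha)/2},\]
and the analogous bound for $T^*_{1,1,1}$. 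Summing against the tail $\Pr(\Hc=h)\leq C\mu^h=C\beta^{-\gamma h}$ from \eqref{l:cmu} yields
\[\Eb[\ind_{\Ac_{1,1,1}}(T_{1,1,1}+T^*_{1,1,1})^\alpha]\leq C\sum_{h\geq 1}\beta^{-h(\gamma+(2-\alpha)/2)}<\infty,\]
since $\gamma+(2-\alpha)/2>0$ for any $\alpha<2$.

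The main obstacle is the combinatorial reduction to a single excursion: the counts $N_k$ and $G_{j,k}$ depend on the walk $W$ on the entire tree $\oT$, so the per-trap contributions $Y_k:=\sum_{j,i}\ind_{\Ac_{i,j,k}}(T_{i,j,k}+T^*_{i,j,k})$ are not obviously independent across $k$. Carefully expanding the $\alpha$-th moment via the conditioning described above, together with the exponential tails of $N_k$ and $G_{j,k}$ and the symmetry of the traps in $k$ given $\xi^*$, is needed to absorb the cross-terms and show that the moment bound extends to the full sum with a coefficient controlled by $\Eb[(\xi^*)^\alpha]$ times the per-excursion bound; a secondary technical point is verifying the finite-trap analogue of Lemma \ref{l:finVar} for excursions based at $\delta_1$ rather than at the root.
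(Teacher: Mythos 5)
Your plan founders on the distribution of $G_{j,1}$. You claim it has exponential moments because it is ``geometric with termination probability at least $\beta/(\beta+1)^2$''; this is false, and it contradicts the basic mechanism of the model. Conditional on the trap $\Tc^\xi_1$ having height $\Hc_1=h$, the walk started at the deepest point $\delta_1$ must climb $h$ levels against the bias to reach $\rho_1$, so by Gambler's ruin the escape probability per visit is $(\beta-1)/(\beta^{h}-1)\asymp\beta^{-h}$, not bounded below by a constant; thus $G_{j,1}-1$ (given $\delta_1$ is reached) is geometric with termination probability $q_{h}=(\beta-1)/(\beta^{h}-1)$, $\Eb\bigl[G_{j,1}^\alpha\mid\Hc_1=h\bigr]\asymp\beta^{\alpha h}$, and unconditionally $G_{j,1}$ has a tail of index exactly $\gamma$ (this is precisely where the heavy-tailed holding times come from). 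In particular $\Eb[G_{j,1}^\alpha]=\infty$ for the very exponents $\alpha>\gamma$ you want to use, so the ``reduction to a single excursion'' that pulls out an $h$-independent moment of $G$ cannot work: the number of summands and the events $\Ac_{i,j,k}$, $T_{i,j,k}-T^*_{i,j,k}$ are all strongly correlated through $\Hc_k$, and the correlation is what saves the day, not something to be decoupled.

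The correct route (and the paper's) is to dispose of $\xi^*$ and $N_k$ by convexity — those moment claims of yours are fine — and then keep everything conditional on the height before summing: bound the remaining term by $\sum_{h\geq1}\mu^h\,\Eb[G_{1,1}^\alpha\mid\Hc=h]\,\Eb\bigl[\bigl(\ind_{\Ac_{1,1,1}}(T_{1,1,1}-T^*_{1,1,1})\bigr)^\alpha\mid\Hc=h\bigr]$, using $\Eb[G_{1,1}^\alpha\mid\Hc=h]\leq C\beta^{\alpha h}$, $\Pb(\Ac_{1,1,1}\mid\Hc=h)\leq C\beta^{-h}$ and, via Lemma \ref{l:alpMom}, $\Eb[(T_{1,1,1}-T^*_{1,1,1})^\alpha\mid\Ac_{1,1,1},\Hc=h]\leq Ch(\beta^\alpha\mu)^h$; this gives $\sum_h h(\beta^{2\alpha-1}\mu^2)^h$, which is finite only for $\alpha<\gamma+1/2$, so the lemma is obtained for $\alpha\in(\gamma,\gamma+1/2)$ — not, as your plan asserts, for an arbitrary $\alpha\in(\gamma,2)$. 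Your H\"older/second-moment treatment of a single excursion (uniform-in-$h$ second moments plus $\Pb(\Ac\mid\Hc=h)\leq C\beta^{-h}$, yielding $C\beta^{-h(2-\alpha)/2}$) is a legitimate alternative to the paper's Lemma \ref{l:alpMom} estimate, but once you reinstate the unavoidable factor $\beta^{\alpha h}$ from $G$ the sum becomes $\sum_h\mu^h\beta^{\alpha h}\beta^{-h(2-\alpha)/2}$, which again converges only in a restricted window ($\alpha<2(\gamma+1)/3$). So the single-excursion estimate is salvageable, but the combinatorial reduction as written rests on a false distributional claim and the asserted range of $\alpha$ is wrong.
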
 
\begin{proof}
Since $\Er[(\xi^*)^\alpha],\Er[N_k^\alpha]<\infty$, by convexity it suffices to show that
\begin{flalign}\label{e:alpDif}
\Eb\left[\left(\sum_{i=1}^{G_{1,1}-1}\ind_{\{\Ac_{i,1,1}\}}(T_{i,1,1}-T_{i,1,1}^*)\right)^{\!\!\!\alpha}\right]
\end{flalign}
is finite. Summing over the height $\Hc$ of the trap $\Tc^\xi_1$ and using that $\Pr(\Hc=h)\leq C\mu^h$ we have that \eqref{e:alpDif} is bounded above by
\begin{flalign*}
& \sum_{h=1}^\infty \!\Pr(\Hc=h)\Eb^h\!\left[\!\left(\sum_{i=1}^{G_{1,1}-1}\ind_{\{\Ac_{i,1,1}\}}(T_{i,1,1}-T_{i,1,1}^*)\right)^{\!\!\!\alpha}\right] 
 \leq \sum_{h=1}^\infty \mu^h\Eb^h\!\left[G_{1,1}^\alpha\right] \Eb^h\!\left[\left(\ind_{\{\Ac_{1,1,1}\}}(T_{1,1,1}-T_{1,1,1}^*)\right)^{\alpha}\right].
\end{flalign*}

It follows from Lemma \ref{l:alpMom} that $\Eb\left[\left(T_{1,1,1}-T_{1,1,1}^*\right)^\alpha\big| \Ac_{1,1,1}, \Hc=h\right]\leq h(\beta^\alpha\mu)^h$. Conditional on $\Hc=h$ we have that $G_{1,1}$ is geometrically distributed with termination probability $(\beta^h-\beta)/(\beta^h-1)$ which gives us that $\Eb\left[G_{1,1}^\alpha\big| \Hc=h\right]\leq C\beta^{\alpha h}$. Similarly, $\Pb(\Ac_{1,1,1}|\Hc=h)\leq C\beta^{-h}$ therefore we have that \eqref{e:alpDif} is bounded above by
\begin{flalign*}
\sum_{h=1}^\infty \mu^h\beta^{h(\alpha-1)} \Eb\left[\left(T_{1,1,1}-T_{1,1,1}^*\right)^\alpha\big| \Ac_{1,1,1}, \Hc=h\right]\leq \sum_{h=1}^\infty h(\beta^{2\alpha-1}\mu^2)^h.
\end{flalign*}
Since $\alpha>\gamma=\log(\mu^{-1})/\log(\beta)$ we have that $\beta^{2\alpha-1}\mu^2=\beta^{2(\alpha-\gamma)-1}<1$ for $\alpha<\gamma+1/2$. For such $\alpha$ we then have that the above sum is finite which completes the proof.
\end{proof}
 
 By Lemma \ref{l:RemCond} and Corollary \ref{c:ign} we can now consider holding times of the form
\[\sum_{k=1}^{\xi^*-1}\sum_{j=1}^{N_k} \sum_{i=1}^{G_{j,k}-1}\tilde{T}_{i,j,k}.\]

On the event that $\delta_k$ is reached on the $j\th$ excursion into $\Tc_k^\xi$, the random variable $G_{j,k}-1$ is geometrically distributed with termination probability 
\[q_{\Hc_k}:=\Pt_{\delta_k}^{\Tc^\xi_k}(\tau_\rho^+<\tau_{\delta_k}^+)=\frac{\beta-1}{\beta^{\Hc_k}-1}\]
where $\Hc_k=|\rho-\delta_k|$ is the height of the trap $\Tc_k^\xi$. We will replace $N_k$ with the number of excursions into $\Tc_k^\xi$ such that $\delta_k$ is reached so that we can consider $G_{j,k}$ as geometrically distributed. The probability that $\delta_k$ is reached on a given excursion depends on the height of the trap; we also remove this dependency. 

Write $B_k:=\sum_{j=1}^{N_k}\ind_{\{G_{j,k}>0\}}$ for the number of excursions that reach $\delta_k$ then $B_k$ is binomially distributed with $N_k$ trials and success probability 
\[\Pt_{\rho_k}^{\oT_k^\xi}(\tau_{\delta_k}^+<\tau_{\rho}^+)=\frac{\beta^{\Hc_k}(1-\beta^{-1})}{\beta^{\Hc_k}-1}=(1-\beta^{-1})+\frac{1-\beta^{-1}}{\beta^{\Hc_k}-1}.\]

Let $\tilde{B}_k$ be binomially distributed with $B_k$ trials and success probability $\beta^{-\Hc_k}$. Then $\tilde{B}_k$ is binomially distributed with $N_k$ trials and success probability $1-\beta^{-1}$ such that $\tilde{B}_k\leq B_k$. Let $\tilde{G}_{j,k}$ be independent and equal in distribution (with respect to $\Pt^{\oT}$) to $G_{j,k}-1$ conditional on $\{G_{j,k}>0\}$.
\begin{lem}\label{l:Bin}
Under the assumptions of Theorem \ref{t:GWT} we have that, for some $\alpha>\gamma$, 
\[\Eb\left[\left(\sum_{k=1}^{\xi^*-1}\sum_{j=1}^{B_k-\tilde{B}_k}\sum_{i=1}^{\tilde{G}_{j,k}}\tilde{T}_{i,j,k}\right)^{\!\!\!\alpha}\right]<\infty.\] 
 \end{lem}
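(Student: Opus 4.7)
My plan is to reformulate the sum $U := \sum_{k=1}^{\xi^*-1}\sum_{j=1}^{B_k-\tilde B_k}\sum_{i=1}^{\tilde G_{j,k}}\tilde T_{i,j,k}$ as a random sum indexed by individual excursions from $\rho$, so that a von Bahr--Esseen (vBE) moment inequality for $1<\alpha\le 2$ can be applied. The main obstacle I foresee is that the obvious per-trap Jensen split $(\sum_k Z_k)^\alpha\le (\xi^*-1)^{\alpha-1}\sum_k Z_k^\alpha$ costs a factor of $(\xi^*)^{\alpha-1}$; combined with the natural per-trap bound of order $(\xi^*)^\alpha$, this forces $\Eb[(\xi^*)^{2\alpha}]<\infty$, which is incompatible with $\alpha>\gamma$ under $\Er[\xi^{2+\gamma}]<\infty$ (that assumption only yields $\Er[(\xi^*)^{1+\gamma}]<\infty$). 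Regrouping by excursions saves the extra factor.

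Let $M$ denote the total number of excursions from $\rho$ before absorption at $\oR$. Given $\xi^*$, $M$ is geometric with parameter of order $1/\xi^*$, so $\Eb[M^\alpha\,|\,\xi^*]\le C(\xi^*)^\alpha$. The $j$-th excursion independently picks its trap $k(j)$ uniformly in $\{1,\ldots,\xi^*-1\}$, reaches $\delta_{k(j)}$ with probability $p_{k(j)}$, and (on that event) is independently tagged as excess with probability $\beta^{-\Hc_{k(j)}}$, matching the construction of $B_k-\tilde B_k$. Writing
\[
Y_j := \ind_{\{j\text{-th excursion is excess}\}}\sum_{i=1}^{\tilde G_j}\tilde T_{i,j},
\]
we have $U=\sum_{j=1}^M Y_j$, and conditional on $\xi^*$, $M$ and the extended trap environments $\Ec:=\{\Tc_k^*\}_{k=1}^{\xi^*-1}$, the $Y_j$ are i.i.d.

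Applying vBE and then conditional Jensen ($\Eb[Y\,|\,\Ec]^\alpha\le\Eb[Y^\alpha\,|\,\Ec]$) to integrate out $\Ec$ yields
\[
\Eb[U^\alpha\,|\,\xi^*,M]\le C\,(M^\alpha+M)\,\Eb[Y^\alpha\,|\,\xi^*].
\]
By exchangeability in $k$, $\Eb[Y^\alpha\,|\,\xi^*]=\Eb[\pi_1\,\Eb[V_1^\alpha\,|\,\Tc_1^*]]$ with $\pi_1:=(1-\beta^{-1})/(\beta^{\Hc_1}-1)\le C\beta^{-\Hc_1}$ and $V_1:=\sum_{i=1}^{\tilde G_1}\tilde T_{i,1}$. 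The convexity bound $V_1^\alpha\le\tilde G_1^{\alpha-1}\sum_i\tilde T_{i,1}^\alpha$ together with $\Eb[\tilde G_1^\alpha\,|\,\Hc_1]\le C\beta^{\alpha\Hc_1}$ (geometric moments) reduces this to $\Eb[Y^\alpha\,|\,\xi^*]\le C\,\Eb[\beta^{(\alpha-1)\Hc_1}\Eb[\tilde T^\alpha\,|\,\Tc_1^*]]$. The Geiger--Kersting construction of $\Tc_1^*$ makes the law of $\tilde T$ independent of $\Hc_1$, so this factorises as $\Eb[\beta^{(\alpha-1)\Hc_1}]\,\Eb[\tilde T^\alpha]$; the first factor is finite whenever $\alpha<1+\gamma$ by $\Pr(\Hc\ge h)\le c_\mu\mu^h$, and the second is finite for $\alpha\le 2$ by Lyapunov and Lemma~\ref{l:finVar}.

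Combining everything, $\Eb[U^\alpha]\le C\,\Eb[(\xi^*)^\alpha]$, and $\Er[(\xi^*)^{1+\gamma}]=\Er[\xi^{2+\gamma}]/\mu<\infty$ makes this bound finite for any $\alpha\le 1+\gamma$. Since $\gamma\in(1,2)$, any choice $\alpha\in(\gamma,2)$ simultaneously satisfies the vBE range, the $\beta^{(\alpha-1)\Hc}$ estimate, and the moment condition on $\tilde T$, completing the proof.
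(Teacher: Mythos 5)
Your argument is correct, but it is organised quite differently from the paper's, and the obstacle that motivated your reorganisation is not actually there. The paper's proof is the direct per-trap route you dismissed: since the walk at $\rho$ chooses, on each visit, between absorption and entering trap $k$ at relative rates $(\beta+1):\beta$, the number $N_k$ of entrances into a \emph{fixed} trap is geometric with termination probability $(\beta+1)/(2\beta+1)$, independent of $\xi^*$ (this is stated in the proof of Lemma \ref{l:outback}); hence the per-trap term has $\alpha$-moment bounded uniformly in $\xi^*$, the convexity split costs only $\Eb[(\xi^*)^\alpha]$, and no moment of order $2\alpha$ of $\xi^*$ is ever needed. Concretely, the paper bounds $B_1-\tilde B_1\le N_1\ind_{\{B_1\neq\tilde B_1\}}$, conditions on the trap height, and uses $\Pb(B_1\neq\tilde B_1\mid\Hc=h)\le C\beta^{-h}$, $\Eb[\tilde G_{1,1}^\alpha\mid\Hc=h]\le C\beta^{\alpha h}$, $\Pr(\Hc=h)\le C\mu^h$ and $\Eb[\tilde T_{1,1,1}^\alpha]<\infty$ (Lemma \ref{l:finVar}) to get a bound $C\sum_h(\beta^{\alpha-1}\mu)^h<\infty$ for $\gamma<\alpha<1+\gamma$. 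Your excursion-indexed version reaches the same conclusion through the same three ingredients --- excess probability of order $\beta^{-\Hc}$ per deep excursion, geometric bottom-excursion count of order $\beta^{\alpha\Hc}$, height tail $\mu^{\Hc}$, plus the Geiger--Kersting independence of $\tilde T$ from $\Hc$ --- but replaces the per-trap convexity step by a von Bahr--Esseen bound over the total excursion count $M$, whose $\alpha$-moment of order $(\xi^*)^\alpha$ again lands you at $\Eb[(\xi^*)^\alpha]<\infty$, available since $\Er[\xi^{2+\gamma}]<\infty$ gives $\Er[(\xi^*)^{1+\gamma}]<\infty$ and one may take $\alpha\in(\gamma,2)\subset(\gamma,1+\gamma)$. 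What your route buys is a single global estimate that never isolates one trap, at the cost of having to verify the conditional independence structure (the $Y_j$ i.i.d.\ given $\xi^*$, $M$ and the extended environments, and $M$ independent of the environments given $\xi^*$), which you assert correctly but tersely; the paper's route avoids $M$ and vBE entirely. One small point in your favour: you work with the thinning in which each deep excursion is tagged excess with probability $\beta^{-\Hc_k}$, which is the intended construction (the paper's statement that $\tilde B_k$ is a thinning of $B_k$ with success probability $\beta^{-\Hc_k}$ is evidently a slip for $1-\beta^{-\Hc_k}$, as otherwise $\tilde B_k$ would not be $\mathrm{Bin}(N_k,1-\beta^{-1})$), and your $\pi_1=(1-\beta^{-1})/(\beta^{\Hc_1}-1)$ matches the paper's bound $\Pb(B_1\neq\tilde B_1\mid\Hc=h)\le C\beta^{-h}$.
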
 
\begin{proof}
Since $\Er[(\xi^*)^\alpha]<\infty$, by convexity it suffices to show that
\begin{flalign}\label{e:Bin}
\Eb\left[\left(\sum_{j=1}^{B_1-\tilde{B}_1}\sum_{i=1}^{\tilde{G}_{j,1}}\tilde{T}_{i,j,1}\right)^{\!\!\!\alpha}\right]<\infty
\end{flalign}
in finite.

Since $B_1-\tilde{B}_1\leq N_1\ind_{\{B_1\neq \tilde{B_1}\}}$ we have that \eqref{e:Bin} is bounded above by
\begin{flalign*}
\Eb\left[\left(\sum_{j=1}^{N_1}\ind_{\{B_1\neq \tilde{B_1}\}}\sum_{i=1}^{\tilde{G}_{j,1}}\tilde{T}_{i,j,1}\right)^{\!\!\!\alpha}\right] \leq \Eb[N_1^\alpha]\Eb\left[\left(\ind_{\{B_1\neq \tilde{B_1}\}}\sum_{i=1}^{\tilde{G}_{1,1}}\tilde{T}_{i,1,1}\right)^{\!\!\!\alpha}\right]
\end{flalign*}
where we have that $\Eb[N_1^\alpha]<\infty$. Summing over the height $\Hc$ of the trap $\Tc^\xi_1$, using that $\Pr(\Hc=h)\leq C\mu^h$ and independence of $\tilde{G}_{1,1}$ with $(\tilde{T}_{i,1,1})_{i\geq 1}$ we have that \eqref{e:Bin} is bounded above by
\begin{flalign*}
 C\sum_{h=1}^\infty \mu^h \Pb(B_1\neq \tilde{B}_1|\Hc=h)\Eb[\tilde{G}_{1,1}^\alpha|\Hc=h]\Eb[\tilde{T}_{1,1,1}^\alpha].
\end{flalign*}
By definition of $B_k$ and $\tilde{B}_k$ we have that $\Pb(B_1\neq \tilde{B}_1|\Hc=h)\leq C \beta^{-h}$ and, as in Lemma \ref{l:RemCond}, we have that $\Eb[\tilde{G}_{1,1}^\alpha|\Hc=h]\leq \beta^{\alpha h}$. By Lemma \ref{l:alpMom} we have that $\Eb[\tilde{T}_{1,1,1}^\alpha]<\infty$ therefore it follows that \eqref{e:Bin} can be bounded above by 
\[C\sum_{h=1}^\infty (\beta^{\alpha-1}\mu)^h \]
which completes the proof since $\beta^{\alpha-1}\mu \in(0,1)$.
\end{proof}

By Corollary \ref{c:ign} we can now consider holding times of the form
\[\tilde{\eta}_0:=\sum_{k=1}^{\xi^*-1}\sum_{j=1}^{\tilde{B}_k} \sum_{i=1}^{\tilde{G}_{j,k}}\tilde{T}_{i,j,k}\]
where we recall that
\begin{enumerate}[1)]
\item
$\xi^*-1$ is the number of children of $\rho$ in $\oT$;
\item 
$\tilde{B}_k$ are jointly equal in distribution to the number of excursions into $\Tc_k^\xi$ which reach the deepest points $\delta_k$;
\item 
$\tilde{G}_{j,k}$ are equal in distribution to the number of excursions from $\delta_k$ to itself (for a walk started at $\delta_k$) which depend on $\oT$ only through $\Hc_k$;
\item 
$\tilde{T}_{i,j,k}$ are excursions (from $\delta_k$ to itself) in the infinite traps $\Tc_k^*$ and, conditional on $\Tc_k^*$, are independent of $\Hc_k$.
\end{enumerate}

\subsection{Subsequential convergence of the Laplace transforms}\label{s:ARTRW}
In this section we show that for any $l \in\Nb$, $\lambda>0$ and a real valued function $f$ satisfying $f(1,\lambda)\asymp\lambda^{\gamma}$,
\begin{flalign*}
n_k(\varsigma)\log\left(\Er\left[\Et^\oT\left[\exp\left(-\frac{\lambda}{n_k(\varsigma)^{1/\gamma}}(\tilde{\eta}_0-\Eb[\tilde{\eta}_0])\right)\right]^l\right]\right)\sim f(l,\lambda)
\end{flalign*}
as $k \rightarrow \infty$. As before, we continue to write $n$ for $n_k(\varsigma)$ until we require the specific subsequences. We begin by rearranging the quenched Laplace transform of $\tilde{\eta}_0$ using the distributions and dependence structure of $\xi^*$, $\tilde{B}_k$, $\tilde{G}_{j,k}$ and $T_{i,j,k}$ detailed in the previous section.
\begin{lem}\label{l:red}
Under the assumptions of Theorem \ref{t:GWT} we have that,
\[\Et^{\oT}\left[\exp\left(-\frac{\lambda}{n^{1/\gamma}}\tilde{\eta}_0\right)\right]= \frac{1}{1+\frac{\beta-1}{\beta+1}\sum_{k=1}^{\xi^*-1}\left(1-\Et^{\oT}\left[\exp\left(-\frac{\lambda}{n^{1/\gamma}}\sum_{i=1}^{\tilde{G}_{1,k}}\tilde{T}_{i,1,k}\right)\right]\right)}.\]
\end{lem}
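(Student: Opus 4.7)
I would prove the identity by setting up a one-step renewal equation for the quenched Laplace transform at the root $\rho$ and then performing a short algebraic simplification.

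Write $F := \Et^{\oT}[\exp(-(\lambda/n^{1/\gamma})\tilde{\eta}_0)]$ for the quantity to be computed and, for each $k\in\{1,\dots,\xi^*-1\}$, set $\phi_k := \Et^{\oT}[\exp(-(\lambda/n^{1/\gamma})\sum_{i=1}^{\tilde{G}_{1,k}}\tilde{T}_{i,1,k})]$. From the transitions \eqref{e:W}, the walk started at $\rho$ moves to the germ $\oR$ with probability $p_0:=(\beta+1)/(1+\beta\xi^*)$, terminating the excursion, and enters each child trap $\Tc_k^\xi$ with equal probability $p:=\beta/(1+\beta\xi^*)$.

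The crux is to read off what each entry into trap $k$ contributes to $\tilde{\eta}_0$ before the walk returns to $\rho$. By the construction of $\tilde{B}_k$, $\tilde{G}_{j,k}$ and $\tilde{T}_{i,j,k}$ in Section \ref{s:DTT}, each entry into trap $k$ is independently ``kept'' with probability $1-\beta^{-1}$; a kept entry contributes an i.i.d.\ copy of $\sum_{i=1}^{\tilde{G}_{1,k}}\tilde{T}_{i,1,k}$ (with Laplace transform $\phi_k$), while a discarded entry contributes $0$. In every non-terminating case the walk returns to $\rho$ and regenerates, so the strong Markov property applied at the return to $\rho$ yields the renewal equation
\begin{equation*}
F \;=\; p_0 \;+\; F\sum_{k=1}^{\xi^*-1}p\bigl[\beta^{-1}+(1-\beta^{-1})\phi_k\bigr].
\end{equation*}
Solving for $F$ and using $p_0+(\xi^*-1)p=1$ to rewrite the denominator as $p_0+(1-\beta^{-1})p\sum_{k=1}^{\xi^*-1}(1-\phi_k)$, then dividing numerator and denominator by $p_0$ and substituting $p/p_0=\beta/(\beta+1)$ (which turns the coefficient $(1-\beta^{-1})p/p_0$ into $(\beta-1)/(\beta+1)$), produces the claimed formula.

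The point requiring care rather than difficulty is the independence statement used to write down the renewal equation: conditional on $\oT$, the ``keep/discard'' indicators attached to successive trap entries must be independent both of one another and of the excursion contributions $\sum_i \tilde{T}_{i,j,k}$, and the latter must be i.i.d.\ across $(j,k)$ in the quenched sense. This is precisely what the coupling of $\tilde{B}_k$ to $N_k$ and the i.i.d.\ construction of the $(\tilde{T}_{i,j,k})_{i,j\geq 1}$ on the infinite traps $\Tc_k^*$ from Section \ref{s:DTT} provide; once these independence properties are in hand the rest is a one-line algebraic manipulation.
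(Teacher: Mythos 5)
Your renewal-equation derivation is correct: the transition probabilities $p_0=(\beta+1)/(1+\beta\xi^*)$ and $p=\beta/(1+\beta\xi^*)$ are the right ones from \eqref{e:W} (since $\rho$ has $\xi^*-1$ children in $\oT$), the per-entry thinning probability $1-\beta^{-1}$ and the quenched i.i.d.\ structure of the contributions $\sum_{i}\tilde{T}_{i,j,k}$ are exactly what the construction of $\tilde{B}_k$, $\tilde{G}_{j,k}$, $\tilde{T}_{i,j,k}$ in Section \ref{s:DTT} delivers, and solving $F=p_0+F\sum_k p[\beta^{-1}+(1-\beta^{-1})\phi_k]$ with $p_0+(\xi^*-1)p=1$ and $(1-\beta^{-1})p/p_0=(\beta-1)/(\beta+1)$ reproduces the stated formula. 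The paper takes a slightly different, though closely related, route: it writes the quenched Laplace transform as $\Et^{\oT}[\prod_k\vartheta_k^{\tilde{B}_k}]$, conditions on the whole vector $(\tilde{B}_k)_k$, uses that the total number of kept entries is geometric with success probability $(\beta+1)/(\xi^*(\beta-1)+2)$ and that, given the total, the entries split uniformly multinomially over the $\xi^*-1$ traps, and then applies the multinomial theorem and sums the resulting geometric series. Your first-step (regeneration at returns to $\rho$) argument is essentially the generating-function identity behind that geometric series, obtained by solving a linear equation instead of summing it explicitly; it is arguably a little cleaner, while the paper's enumeration makes the joint law of $(\tilde{B}_k)_k$ fully explicit. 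One small point of rigour: since $\tilde{\eta}_0$ is defined through the auxiliary variables rather than literally as a path functional, the ``strong Markov property at the return to $\rho$'' should be phrased as the regeneration of the i.i.d.\ visit-to-$\rho$ structure (each visit independently chooses germ or a trap, each trap entry independently kept with probability $1-\beta^{-1}$ and assigned an independent contribution with Laplace transform $\phi_k$); this is precisely the independence you flag, and it is the same input the paper uses, so there is no gap.
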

\begin{proof}
Write $\vartheta_k:=\Et^{\oT}\left[\exp\left(-\frac{\lambda}{n^{1/\gamma}}\sum_{i=1}^{\tilde{G}_{1,k}}\tilde{T}_{i,1,k}\right)\right]$ then, since $\xi^*$ and $\vartheta_k$ are deterministic with respect to $\Pt^{\oT}$ and that $\tilde{T}_{i,j,k}$ and $\tilde{G}_{j,k}$ are independent of $\xi^*$ and $\tilde{B}_k$ we have that
\begin{flalign*}
\Et^{\oT}\left[\exp\left(-\frac{\lambda}{n^{1/\gamma}}\tilde{\eta}_0\right)\right] 
 = \Et^{\oT}\left[\prod_{k=1}^{\xi^*-1}\vartheta_k^{\tilde{B}_k}\right] 
=\sum_{(b_k)_{k=1}^{\xi^*-1}\in \Zb_+^{\xi^*-1}}\left(\prod_{k=1}^{\xi^*-1}\vartheta_k^{b_k}\right)\Pt^{\oT}\left(\bigcap_{k=1}^{\xi^*-1}\tilde{B}_k=b_k\right). 
\end{flalign*}
Summing over the total number of excursions $\sum_{k=1}^{\xi^*-1}\tilde{B}_k$ we have that the above expression is equal to   
\begin{flalign*}
& \sum_{L=0}^\infty \left(\frac{(\xi^*-1)(\beta-1)}{\xi^*(\beta-1)+2}\right)^L\left(\frac{\beta+1}{\xi^*(\beta-1)+2}\right)\sum_{\substack{(b_k)_{k=1}^{\xi^*-1}\in \Zb_+^{\xi^*-1}\\ \sum_{k=1}^{\xi^*-1}b_k=L}}(\xi^*-1)^{-L}\binom{L}{b_1,...,b_{\xi^*-1}}\prod_{k=1}^{\xi^*-1}\vartheta_k^{b_k} \\
& \qqquad= \frac{\beta+1}{\xi^*(\beta-1)+2}\sum_{L=0}^\infty \left(\frac{(\beta-1)\sum_{k=1}^{\xi^*-1}\vartheta_k}{\xi^*(\beta-1)+2}\right)^L \\
& \qqquad= \frac{1}{1+\frac{\beta-1}{\beta+1}\sum_{k=1}^{\xi^*-1}(1-\vartheta_k)}
\end{flalign*}
which completes the proof.
\end{proof}

The excursion times $(T_{i,1,k})_{i\geq 1}$ are identically distributed with respect to $\Pt^{\oT}$ and independent from $\tilde{G}_{1,k}$; therefore,
\begin{flalign*}
\Et^{\oT}\left[\exp\left(-\frac{\lambda}{n^{1/\gamma}}\sum_{i=1}^{\tilde{G}_{1,k}}\tilde{T}_{i,1,k}\right)\right]
& = \Et^{\oT}\left[\prod_{i=1}^{\tilde{G}_{1,k}}\exp\left(-\frac{\lambda}{n^{1/\gamma}}\tilde{T}_{i,1,k}\right)\right] \\
& = \Et^{\oT}\left[\Et^{\oT}\left[\exp\left(-\frac{\lambda}{n^{1/\gamma}}\tilde{T}_{1,1,k}\right)\right]^{\tilde{G}_{1,k}}\right] \\
& = \frac{1}{1+\frac{1-q_{\Hc_k}}{q_{\Hc_k}}\Et^{\oT}\left[1-\exp\left(-\frac{\lambda}{n^{1/\gamma}}\tilde{T}_{1,1,k}\right)\right]}
\end{flalign*}
by using the probability generating function of a geometric random variable.

Write 
\[\psi_n^{(k)}=\frac{1-q_{\Hc_k}}{q_{\Hc_k}}\frac{\lambda}{n^{1/\gamma}}\Et^{\oT}[\tilde{T}_{1,1,k}]=\frac{\beta^{\Hc_k}-\beta}{\beta-1}\frac{\lambda}{n^{1/\gamma}}\Et^{\oT}[\tilde{T}_{1,1,k}]\]
then using Lemma \ref{l:finVar} and a Taylor expansion we have that 
\[\Er\left[\Et^{\oT}\left[\exp\left(-\frac{\lambda}{n^{1/\gamma}}\tilde{\eta}_0\right)\right]^l\right]
=\Er\left[\left(\frac{1}{1+\frac{\beta-1}{\beta+1}\sum_{k=1}^{\xi^*-1}\left(1-\frac{1}{1+\psi_n^{(k)}}\right)}\right)^l\right]+O(n^{-2/\gamma}).\]
This reduces the proof of Theorem \ref{t:GWT} to showing convergence of 
\begin{flalign}\label{e:Red2}
n\log\left(\Er\left[\left(\frac{1}{1+\frac{\beta-1}{\beta+1}\sum_{k=1}^{\xi^*-1}\left(1-\frac{1}{1+\psi_n^{(k)}}\right)}\right)^l\right]\exp\left(\frac{l\lambda\Eb[\tilde{\eta}_0]}{n^{1/\gamma}}\right)\right)
\end{flalign}
along the given subsequences. 

Let 
\[\Phi_n^{(M)}:=\frac{\beta-1}{\beta+1}\sum_{k=1}^{M}\left(1-\frac{1}{1+\psi_n^{(k)}}\right)\quad \text{then} \quad \Er[\Phi_n^{(\xi^*-1)}]=\frac{\lambda\Eb[\tilde{\eta}_0]}{n^{1/\gamma}}+\frac{\beta-1}{\beta+1}\Er[\xi^*-1]\Er\left[\frac{(\psi_n^{(1)})^2}{1+\psi_n^{(1)}}\right]\]
and the first expectation in \eqref{e:Red2} can be written as
\[1-l\left(\Er[\Phi_n^{(\xi^*-1)}]+\Er\left[\frac{(\Phi_n^{(\xi^*-1)})^2}{1+\Phi_n^{(\xi^*-1)}}\right]\right)+\sum_{j=2}^l(-1)^j\binom{l}{j}\Er\left[\left(\frac{\Phi_n^{(\xi^*-1)}}{1+\Phi_n^{(\xi^*-1)}}\right)^j\right].\]
Noting that 
\[\exp\left(\frac{l\lambda\Eb[\tilde{\eta}_0]}{n^{1/\gamma}}\right)=1+l\frac{\lambda\Eb[\tilde{\eta}_0]}{n^{1/\gamma}}+O(n^{-2/\gamma})=1+l\Er[\Phi_n^{(\xi^*-1)}]+O(n^{-2/\gamma}),\]
it suffices to show convergence of
\[n\Er\left[\frac{(\psi_n^{(1)})^2}{1+\psi_n^{(1)}}\right], \quad n\Er\left[\frac{(\Phi_n^{(\xi^*-1)})^2}{1+\Phi_n^{(\xi^*-1)}}\right] \quad \text{and} \quad n\Er\left[\left(\frac{\Phi_n^{(\xi^*-1)}}{1+\Phi_n^{(\xi^*-1)}}\right)^j\right]\]
for every $j\geq 2$. Using integration by parts we have that
\begin{flalign}\label{e:IBP}
n\Er\left[\frac{(\psi_n^{(1)})^2}{1+\psi_n^{(1)}}\right] & =n\int_0^\infty \frac{t(2+t)}{(1+t)^2}\Pr(\psi_n^{(1)}>t)\d t \\
n\Er\left[\frac{(\Phi_n^{(\xi^*-1)})^2}{1+\Phi_n^{(\xi^*-1)}}\right] 
& = n\sum_{M=1}^\infty\Pr(\xi^*-1=M)\Er\left[\frac{(\Phi_n^{(M)})^2}{1+\Phi_n^{(M)}}\right]\notag\\
 & = n\sum_{M=1}^\infty\Pr(\xi^*-1=M)\int_0^\infty \frac{t(2+t)}{(1+t)^2}\Pr(\Phi_n^{(M)}>t)\d t \notag\\
n\Er\left[\left(\frac{\Phi_n^{(\xi^*-1)}}{1+\Phi_n^{(\xi^*-1)}}\right)^j\right] 
& = n\sum_{M=1}^\infty\Pr(\xi^*-1=M)\Er\left[\left(\frac{\Phi_n^{(M)}}{1+\Phi_n^{(M)}}\right)^j\right]\notag \\
& = n\sum_{M=1}^\infty\Pr(\xi^*-1=M)\int_0^\infty \frac{jt^{j-1}}{(1+t)^{j+1}}\Pr(\Phi_n^{(M)}>t)\d t. \notag
\end{flalign}
Noting that $\xi^*-1$ has finite $1+\gamma$ moments and
\[\int_0^\infty \frac{t(2+t)}{(1+t)^2}t^{-\gamma}\d t, \; \int_0^\infty \frac{jt^{j-1}}{(1+t)^{j+1}}t^{-\gamma}\d t <\infty,\]
Lemma \ref{l:UpBd} shows that the integrands in the right hand side of \eqref{e:IBP} are bounded above by an integrable function. By dominated convergence, it follows that it remains to show convergence of $n\Pr(\psi_n^{(1)}>t)$ and $n\Pr(\Phi_n^{(M)}>t)$ for almost every $t>0$.
\begin{lem}\label{l:UpBd}
Under the assumptions of Theorem \ref{t:GWT}, there exists a constant $C$ such that for any $M\in\Nb$ we have
\[n\Pr(\psi_n^{(1)}>t)\leq Ct^{-\gamma} \quad \text{and} \quad n\Pr(\Phi_n^{(M)}>t)\leq  CM^{1+\gamma}t^{-\gamma}\]
uniformly over $n\in\Nb$ and $t>0$.
\end{lem}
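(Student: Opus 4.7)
The bound on $\Phi_n^{(M)}$ will follow from the bound on $\psi_n^{(1)}$ by a straightforward union bound, so the real work is the single-variable tail estimate. Writing $q := \Et^{\oT}[\tilde{T}_{1,1,1}]$, the definition gives
\[\psi_n^{(1)} \leq \frac{\lambda \beta^{\Hc_1} q}{(\beta-1)n^{1/\gamma}},\]
so it suffices to prove $\Pr(\beta^{\Hc_1} q > s) \leq Cs^{-\gamma}$ for all $s>0$; substituting $s = ctn^{1/\gamma}$ with $c = (\beta-1)/\lambda$ will then yield $n\Pr(\psi_n^{(1)} > t) \leq Ct^{-\gamma}$.

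\emph{Tail of $\psi_n^{(1)}$ (Step 1).} Conditioning on $\Hc_1$ and applying Markov's inequality to $q$, together with Jensen's bound $q^2 \leq \Et^\oT[\tilde{T}_{1,1,1}^2]$, reduces matters to a uniform estimate $\Eb[\tilde{T}_{1,1,1}^2 \mid \Hc_1 = h] \leq C$ (the main obstacle, discussed below). Granted this,
\[\Pr(\beta^{\Hc_1}q > s) \leq \sum_{h=0}^{\infty}\Pr(\Hc_1 = h)\min\!\left(1,\; \frac{C\beta^{2h}}{s^2}\right).\]
Bounding $\Pr(\Hc_1 = h) \leq C\mu^h$ via the decreasing property of $\Pr(\Hc \geq h)\mu^{-h}$ and splitting at $h^* := \lfloor \log_\beta s\rfloor$, the identity $\mu\beta^\gamma = 1$ gives $\mu^{h^*} \asymp s^{-\gamma}$, while $\log(\mu\beta^2)/\log\beta = 2-\gamma$ gives $(\mu\beta^2)^{h^*} \asymp s^{2-\gamma}$. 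Both pieces therefore balance at the desired order:
\[\sum_{h > h^*} C\mu^h \leq Cs^{-\gamma}, \qquad s^{-2}\sum_{h \leq h^*} C(\mu\beta^2)^h \leq Cs^{-\gamma}.\]

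\emph{Tail of $\Phi_n^{(M)}$ (Step 2).} Writing $\Phi_n^{(M)} = \frac{\beta-1}{\beta+1}\sum_{k=1}^M Y_k$ with $Y_k := \psi_n^{(k)}/(1+\psi_n^{(k)}) \in [0,1]$ i.i.d., the boundedness $Y_k \leq 1$ forces $\{\sum_k Y_k > ct\} \subseteq \{\max_k Y_k \geq ct/M\}$ for $c = (\beta+1)/(\beta-1)$. Since $Y_1 > s$ implies $\psi_n^{(1)} > s$, a union bound gives
\[\Pr(\Phi_n^{(M)} > t) \leq M\,\Pr(\psi_n^{(1)} > ct/M),\]
and Step 1 yields $n\Pr(\Phi_n^{(M)} > t) \leq CM^{1+\gamma}/t^\gamma$ in the range $ct/M \leq 1$. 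The remaining cases are trivial: for $ct/M > 1$ the probability vanishes since $\Phi_n^{(M)} \leq \frac{\beta-1}{\beta+1}M$, and for $t$ so small that the Step 1 estimate would exceed $1$, the trivial bound $n\Pr \leq n$ is already dominated by $CM^{1+\gamma}/t^\gamma$.

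\emph{Main obstacle.} The crux is verifying $\Eb[\tilde{T}_{1,1,1}^2 \mid \Hc_1 = h] \leq C$ uniformly in $h$. Retracing the proof of Lemma \ref{l:finVar}, the contribution from spine level $k$ of $\Tc_1^*$ is of order $\beta^{-k}\cdot k(\beta^2\mu)^k = k(\beta\mu)^k$, and the series $\sum_k k(\beta\mu)^k$ is finite since $\beta\mu < 1$. For this to hold uniformly in $h$ we need the off-spine subtraps at each depth $k$ to be stochastically dominated by a GW-tree conditioned to have height at most $k$, regardless of whether $k \leq h$ or $k > h$: for $k > h$ this is built into the Geiger--Kersting extension, and for $k \leq h$ it is the classical spine decomposition of a height-conditioned GW-tree. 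This uniformity, together with the absence of any $h$-dependence in the geometric bound, is the technical point that makes Step 1 work.
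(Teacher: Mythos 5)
Your proposal is correct, but the single-tail estimate is carried out differently from the paper, and it is worth recording the comparison. Both arguments reduce to bounding the tail of $\beta^{\Hc_1}\,\Et^{\oT}[\tilde{T}_{1,1,1}]$ and both conclude for $\Phi_n^{(M)}$ by the same union bound, but the paper conditions in the opposite direction: it conditions on the quenched mean $q=\Et^{\oT}[\tilde{T}_{1,1,1}]$, applies the geometric tail $\Pr(\Hc>x)\leq C\mu^{x}$ to the height, and integrates it out exactly, using $\mu^{\log_{\beta}x}=x^{-\gamma}$ to obtain $n\Pr(\psi_n^{(1)}>t)\leq Ct^{-\gamma}\,\Er\bigl[\Et^{\oT}[\tilde{T}_{1,1,1}]^{\gamma}\bigr]$, which is finite since $\gamma<2$ and Lemma \ref{l:finVar} gives a finite second moment. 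You instead condition on $\Hc_1=h$, apply Chebyshev to $q$ with a conditionally uniform second-moment bound, and split the sum over $h$ at $h^{*}=\lfloor\log_{\beta}s\rfloor$, balancing $\mu^{h^{*}}\asymp s^{-\gamma}$ against $s^{-2}(\mu\beta^{2})^{h^{*}}\asymp s^{-\gamma}$. Your computation is valid (and robust: if $\mu\beta^{2}\leq 1$ the truncated sum is even smaller), but it costs you the extra verification labelled as the ``main obstacle'', which you resolve by retracing Lemma \ref{l:finVar}. That detour can be avoided: by the Geiger--Kersting construction in Subsection \ref{s:DTT}, the law of $\Tc_1^{*}$ seen from $\delta_1$ does not depend on $\Hc_1$, so $\tilde{T}_{1,1,1}$ (and hence $q$) is independent of $\Hc_1$ and $\Eb[\tilde{T}_{1,1,1}^{2}\mid\Hc_1=h]=\Eb[\tilde{T}_{1,1,1}^{2}]<\infty$ directly from Lemma \ref{l:finVar}; note that the paper's own proof tacitly uses the same decoupling when it applies the geometric tail of $\Hc$ conditionally on $\Et^{\oT}[\tilde{T}_{1,1,1}]$. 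In short: the paper's route is shorter and only needs a finite $\gamma$-moment of the quenched mean, while yours is a more hands-on Chebyshev-plus-splitting argument that needs (and has) the full second moment; both rest on exactly the same structural inputs, namely the height/excursion decoupling built into the $\tilde{T}$ construction, the tail $\Pr(\Hc\geq h)\leq C\mu^{h}$, the identity $\mu=\beta^{-\gamma}$, and Lemma \ref{l:finVar}.
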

\begin{proof}
Using that $\Pr(\Hc\geq t)\leq C\mu^t$ for some constant $C$ we have that
\begin{flalign*}
n\Pr(\psi^{(1)}>t) & \leq n\Pr\left(\beta^{\Hc}>\frac{C_{\lambda,\beta} t n^{1/\gamma}}{\Et^{\oT}[\tilde{T}_{1,1,1}]}\right) \\
&\leq n\Er\left[\Pr\left(\Hc>\frac{\log\left(C_{\lambda,\beta} t n^{1/\gamma}\right)-\log\left(\Et^{\oT}[\tilde{T}_{1,1,1}]\right)}{\log(\beta)}\big| \Et^{\oT}[\tilde{T}_{1,1,1}]\right)\right] \\
& \leq n \Er\left[\mu^{\frac{\log\left(C_{\lambda,\beta} t n^{1/\gamma}\right)-\log\left(\Et^{\oT}[\tilde{T}_{1,1,1}]\right)}{\log(\beta)}}\right] \\
& = Ct^{-\gamma} \Er\left[\Et^{\oT}[\tilde{T}_{1,1,1}]^{\gamma}\right] 
\end{flalign*}
which is bounded above by $Ct^{-\gamma}$ since $\Er\left[\Et^{\oT}[\tilde{T}_{1,1,1}]^2\right]<\infty$ by Lemma \ref{l:finVar}. Noting that $\Phi_n^{(M)}\leq C_\beta\sum_{k=1}^M\psi_n^{(k)}$, a union bound and the above estimate on $n\Pr(\psi^{(1)}>t)$ immediately give the bound for $n\Pr(\Phi_n^{(M)}>t)$.

\end{proof}

We now conclude by proving convergence of $n\Pr(\psi_n^{(1)}>t)$ and $n\Pr(\Phi_n^{(M)}>t)$ for almost every $t>0$. 
\begin{lem}\label{l:SubseqCon}
Under the assumptions of Theorem \ref{t:GWT} we have that, for every $M\in\Nb$, $n_k(\varsigma)\Pr(\psi_{n_k(\varsigma)}^{(1)}>t)$ and  $n_k(\varsigma)\Pr(\Phi_{n_k(\varsigma)}^{(M)}>t)$ converge as $k\rightarrow \infty$ for almost every $t>0$.
\end{lem}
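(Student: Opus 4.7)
The plan is to exploit the exact matching between the subsequence scaling $n_k(\varsigma)^{1/\gamma}=\varsigma^{1/\gamma}\beta^k(1+o(1))$ (using $\mu=\beta^{-\gamma}$) and the asymptotically geometric tail $\Pr(\Hc\geq h)\sim c_\mu\mu^h$ from \eqref{l:cmu}.

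For the first convergence, write $A:=\Et^{\oT}[\tilde{T}_{1,1,1}]$, a $\Tc_1^*$-measurable random variable, so that
\[
\psi_n^{(1)}=\frac{\beta^{\Hc_1}-\beta}{\beta-1}\cdot\frac{\lambda A}{n^{1/\gamma}}.
\]
Rearranging shows $\{\psi_n^{(1)}>t\}=\{\Hc_1>\log_\beta(\beta+(\beta-1)tn^{1/\gamma}/(\lambda A))\}$, and for $n=n_k(\varsigma)$ with $k$ large, integer-valuedness of $\Hc_1$ turns this into $\{\Hc_1\geq k+\lceil g(t,A)\rceil\}$, where
\[
g(t,A):=\log_\beta\left(\frac{(\beta-1)t\varsigma^{1/\gamma}}{\lambda A}\right);
\]
the exceptional set $\{t:\Pb(g(t,A)\in\Zb)>0\}$ is at most countable, so we may fix $t$ outside it. Applying $\Pr(\Hc\geq h)=c_\mu\mu^h(1+o(1))$ conditional on $A$ (see the obstacle below) and invoking Lemma \ref{l:UpBd} for dominated convergence should give
\[
n_k(\varsigma)\Pr\left(\psi_{n_k(\varsigma)}^{(1)}>t\right)\longrightarrow \varsigma c_\mu\Er\left[\mu^{\lceil g(t,A)\rceil}\right].
\]

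For the second convergence, observe that $\Phi_n^{(M)}=\sum_{j=1}^M f(\psi_n^{(j)})$ with $f(x):=\frac{\beta-1}{\beta+1}\cdot\frac{x}{1+x}$ strictly increasing on $[0,\infty)$, and the $\psi_n^{(j)}$ are i.i.d.\ with marginal tail $O(1/n)$ by the first part. A standard single-large-jump argument, in which one bounds the contribution of two or more simultaneously large $\psi_n^{(j)}$ by $\binom{M}{2}\Pr(\psi_n^{(1)}>\varepsilon)^2=O(n^{-2})$ via Lemma \ref{l:UpBd}, and absorbs the negligible contribution of moderate summands by taking $\varepsilon$ small, yields
\[
n_k(\varsigma)\Pr\!\left(\Phi_{n_k(\varsigma)}^{(M)}>t\right)=M\,n_k(\varsigma)\Pr\!\left(\psi_{n_k(\varsigma)}^{(1)}>f^{-1}(t)\right)+o(1),
\]
which converges by the first part for each $t$ in the interior of the range of $\sum_{j=1}^M f$.

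The main obstacle is the joint distribution of $\Hc_1$ and $A$: strictly, $A$ depends on all of $\Tc_1^*$, which contains $\Tc_1^\xi$ and hence $\Hc_1$, so the conditional tail of $\Hc_1$ given $A$ is not automatic. To resolve this, one can approximate $A$ by $A_H$, defined analogously but with $\tilde{T}_{1,1,1}$ truncated to excursions confined within graph distance $H$ of $\delta_1$ in $\Tc_1^*$; this $A_H$ depends on $\Tc_1^*$ only within depth $H$ of $\delta_1$, a region whose law is independent of $\Hc_1$ once $\Hc_1\geq H$. The error $A-A_H$ is controlled by moment computations analogous to Lemmas \ref{l:finVar} and \ref{l:alpMom}. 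Sending $H\to\infty$ after $k\to\infty$ decouples $A$ from the lattice tail of $\Hc_1$ and completes the argument.
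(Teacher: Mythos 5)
Your argument is correct and follows essentially the same route as the paper: for $\psi^{(1)}$ the convergence is obtained by combining the asymptotically geometric tail $\Pr(\Hc\geq m)\sim c_\mu\mu^m$ of \eqref{l:cmu} with the lattice subsequence $n_k(\varsigma)=\lfloor\varsigma\mu^{-k}\rfloor$ (the paper states this computation tersely; your version, with the at most countable exceptional set of $t$ and dominated convergence using $\Er[\Et^{\oT}[\tilde{T}_{1,1,1}]^{\gamma}]<\infty$, is exactly what is meant), and for $\Phi^{(M)}$ the paper likewise uses a one-big-jump/inclusion--exclusion argument with an $\varepsilon$-cutoff (written out for $M=2$ and extended by induction), which is your single-large-jump step. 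The one place you genuinely diverge is your ``main obstacle'', and it is not an obstacle: the entire purpose of the replacement of $T_{i,j,k}$ by the excursions $\tilde{T}_{i,j,k}$ in the Geiger--Kersting extension $\Tc_1^*$ in Subsection \ref{s:DTT} is that the conditional law of $\Tc_1^*$ given $\Hc_1=h$ does not depend on $h$, so $\Hc_1$ is independent of $\Tc_1^*$ and hence of $A=\Et^{\oT}[\tilde{T}_{1,1,1}]$; this independence is already what Lemma \ref{l:UpBd} uses when conditioning on $\Et^{\oT}[\tilde{T}_{1,1,1}]$. Your depth-$H$ truncation $A_H$ is therefore unnecessary, and as you set it up it anyway leans on the same consistency property of the construction that yields the full independence, so it buys nothing. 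One small phrasing slip in the second part: for $t\geq(\beta-1)/(\beta+1)$ the inverse $f^{-1}(t)$ does not exist; there the event $\{\Phi^{(M)}_{n}>t\}$ forces at least two summands to exceed a fixed $\varepsilon$, so your $O(n^{-2})$ bound gives limit $0$, and combined with the case $t<(\beta-1)/(\beta+1)$ this covers almost every $t>0$, which is all the lemma claims.
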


\begin{proof}
Using that $\Pr(\Hc\geq m)\sim c_\mu\mu^m$ and $\Er[\Et^{\oT}[\tilde{T}_{1,1,1}]^\gamma]<\infty$ we have that
\begin{flalign*}
n_k\Pr(\psi^{(1)}_{n_k}>t) 
=  n_k\Pr\left(\beta^{\Hc}>\frac{t n_k^{1/\gamma}(\beta-1)}{\lambda\Et^{\oT}[\tilde{T}_{1,1,1}]}+\beta\right) 
= n_k\Er\left[\Pr\left(\Hc>\frac{\log\left(\frac{t n_k^{1/\gamma}(\beta-1)}{\lambda\Et^{\oT}[\tilde{T}_{1,1,1}]}+\beta\right)}{\log(\beta)}\Big| \Et^{\oT}[\tilde{T}_{1,1,1}]\right)\right] 
\end{flalign*}
converges for almost every $t>0$ due to our choice of subsequence $n_k=\lfloor\varsigma\mu^{-k}\rfloor$.

Let $\chi_k^{(i)}=\psi_{n_k}^{(i)}/(1+\psi_{n_k}^{(i)})$ then by the first part of the lemma we have that $n_k\Pr(\chi_{k}^{(i)}>t)$ converges for almost every $t>0$. In particular, $n_k\Pr(\Phi_n^{(1)}>t)$ converges for almost every $t>0$. To show that $n_k\Pr(\Phi_{n_k}^{(M)}>t)$ converges for almost every $t>0$ we show that 
\begin{flalign}\label{e:suml}
\lim_{k\rightarrow \infty}n_k(\Pr(\Phi_n^{(M)}>t)-M\Pr(\Phi_n^{(1)}>t))=0.
\end{flalign}
We show that \eqref{e:suml} holds for $M=2$, convergence then holds for general $M$ by an inductive argument.

For any $t>0$ we have that 
\begin{flalign*}
\Pr(\chi_k^{(1)}+\chi_k^{(2)}>t)  \geq  \Pr(\chi_k^{(1)}>t)+\Pr(\chi_k^{(2)}>t)-\Pr(\chi_k^{(1)}>t,\chi_k^{(2)}>t).
\end{flalign*}
By independence of $\chi_k^{(1)}$ and $\chi_k^{(2)}$ we have that $n_k\Pr(\chi_k^{(1)}>t,\chi_k^{(2)}>t)=n_k\Pr(\chi_k^{(1)}>t)\Pr(\chi_k^{(2)}>t)$ converges to $0$ as $k\rightarrow \infty$ thus $\liminf_{k\rightarrow\infty}n_k(\Pr(\Phi_n^{(2)}>t)-2\Pr(\Phi_n^{(1)}>t))\geq 0$. 

Next, we have that for any $\varepsilon>0$,
\begin{flalign*}
\Pr(\chi_k^{(1)}+\chi_k^{(2)}>t) 
& \leq  \Pr(\chi_k^{(1)}>\varepsilon,\chi_k^{(2)}>\varepsilon)+\Pr(\chi_k^{(1)}>t-\varepsilon,\chi_k^{(2)}<\varepsilon)+\Pr(\chi_k^{(2)}>t-\varepsilon,\chi_k^{(1)}<\varepsilon). 
\end{flalign*}
By the previous part of the lemma we have that $n_k\Pr(\chi_k^{(1)}>\varepsilon,\chi_k^{(2)}>\varepsilon)$ converges to $0$ as $k\rightarrow \infty$. Moreover, by independence of $\chi_k^{(1)}$ and $\chi_k^{(2)}$ for almost every $t>0$ we have that 
\[\lim_{\varepsilon\rightarrow 0}\limsup_{k\rightarrow \infty}n_k|\Pr(\chi_k^{(1)}>t-\varepsilon)\Pr(\chi_k^{(2)}<\varepsilon) -\Pr(\chi_k^{(1)}>t)|=0\]
 therefore 
$\limsup_{k\rightarrow\infty}n_k(\Pr(\Phi_n^{(2)}>t)-2\Pr(\Phi_n^{(1)}>t))\leq 0$ which completes the proof.
\end{proof}

\section*{Acknowledgements}
I would like to thank David Croydon for his comments and many useful discussions. This work is supported by NUS grant R-146-000-260-114.

\end{document}